\documentclass[10pt]{amsart}
\usepackage{amsmath, amscd, amsthm} 
\usepackage{amssymb, amsfonts} 
\usepackage[all]{xy} 

 
\newcommand{\C}{\mathbb{C}} 
\newcommand{\A}{\mathbb{A}}
\renewcommand{\P}{\mathbb{P}}
\newcommand{\Z}{\mathbb{Z}}
\newcommand{\R}{\mathbb{R}}

\newcommand{\iso}{\cong}

\newcommand{\wkeq}{\simeq}

\newcommand{\Hom}{\mathrm{Hom}}
\newcommand{\mcal}[1]{\mathcal{#1}}


\DeclareMathOperator*{\colim}{\mathrm{colim}}

\DeclareMathOperator{\hofib}{\mathrm{hofib}}

\DeclareMathOperator{\sing}{\mathrm{Sing}}

\DeclareMathOperator{\spec}{\mathrm{Spec}}

\DeclareMathOperator{\coker}{\mathrm{Coker}}
\DeclareMathOperator{\codim}{\mathrm{codim}}

\numberwithin{equation}{section} 

\theoremstyle{plain}
\newtheorem{theorem}[equation]{Theorem}
\newtheorem{proposition}[equation]{Proposition}

\newtheorem{corollary}[equation]{Corollary}
\newtheorem{conjecture}[equation]{Conjecture}

\theoremstyle{definition}
\newtheorem{definition}[equation]{Definition}
\newtheorem{example}[equation]{Example}

\theoremstyle{remark}
\newtheorem{remark}[equation]{Remark}

\begin{document}
\title{ Remarks on filtrations of the homology of real varieties}
\author{Jeremiah Heller}
\email{heller@math.uni-wuppertal.de}
\author{Mircea Voineagu}
\email{mircea.voineagu@ipmu.jp}

\begin{abstract}
 We demonstrate that a conjecture of Teh which relates the niveau filtration on Borel-Moore homology of real varieties and the images of generalized cycle maps from reduced Lawson homology is false. We show that the niveau filtration on reduced Lawson homology is trivial and construct an explicit class of examples for which Teh's conjecture fails by generalizing a result of Sch\"ulting. 
 We compare various cycle maps and in particular we show that the Borel-Haeflinger cycle map naturally factors through the reduced Lawson homology cycle map.
\end{abstract}
\maketitle
\tableofcontents
\section{Introduction}

Let $X$ be a quasi-projective real variety. In \cite{Teh:real} the reduced Lawson homology groups $RL_{q}H_{n}(X)$ are introduced as homotopy groups of certain spaces of ``reduced'' algebraic cycles. For $q=0$ we have $RL_{0}H_{n}(X) = H_{n}(X(\R);\Z/2)$, where $H_{n}(X(\R);\Z/2)$ is the Borel-Moore homology. At the other extreme we have that $RL_{n}H_{n}(X)$ is a quotient of the Chow group $CH_{n}(X)$. There are  generalized cycle maps
$$
cyc_{q,n}:RL_{q}H_{n}(X)\to H_{n}(X(\R);\Z/2)
$$
and the images of these cycle maps form a filtration of the homology
$$
Im(cyc_{n,n}) \subseteq Im(cyc_{n-1,n}) \subseteq \cdots \subseteq Im(cyc_{0,n})=H_{n}(X(\R);\Z/2).
$$
The first step of this filtration is the image of the Borel-Haeflinger cycle map $Im(cyc_{n,n}) =H_{n}(X(\R);\Z/2)_{alg}$ . 

The construction of the reduced Lawson homology is based on Friedlander's construction of Lawson homology groups for complex varieties. Friedlander-Mazur \cite{FM:filt} have conjectured a relationship between the filtration on singular homology of the space of complex points given by images of the generalized cycle map and the niveau filtration. Teh makes an analogous conjecture for the reduced Lawson groups.
\begin{conjecture}[{\cite[Conjecture 7.9]{Teh:real}}]\label{tco}
Let $X$ be a smooth projective real variety. Then   $Im(cyc_{q,n}) =N _{2n-q}H _n(X(\R);\Z/2)$ for any $0\leq q\leq n$.
\end{conjecture}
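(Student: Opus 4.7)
The plan is to establish the claimed equality by proving the two inclusions separately, following the template of Friedlander and Mazur's approach to the analogous filtration question for complex varieties.

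For the inclusion $\mathrm{Im}(cyc_{q,n}) \subseteq N_{2n-q} H_n(X(\R);\Z/2)$, the idea is geometric. Any class $\alpha \in RL_q H_n(X)$ is represented by a family of algebraic $q$-cycles on $X$, so its image under the cycle map is supported on the real locus of a closed subvariety of complex dimension $q$. Unwinding the definition of the niveau filtration places $cyc_{q,n}(\alpha)$ in the correct filtration step; this direction should be essentially formal once compatibility of the cycle map with support is verified.

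For the reverse inclusion $N_{2n-q} H_n(X(\R);\Z/2) \subseteq \mathrm{Im}(cyc_{q,n})$, the natural strategy is induction on $q$. The case $q = 0$ is tautological, since $RL_0 H_n(X) = H_n(X(\R);\Z/2)$ and $N_{2n}$ is the whole group; the extreme case $q = n$ is a statement about the Borel-Haeflinger cycle map, which can be addressed via the Sch\"ulting-type results alluded to in the introduction. For the inductive step, a class $\beta \in N_{2n-q}$ lies in the image of $i_* \colon H_n(Y(\R);\Z/2) \to H_n(X(\R);\Z/2)$ for some closed $i \colon Y \hookrightarrow X$ of suitable dimension. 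The aim is to lift $\beta$ to $RL_q H_n(Y)$ and push it forward via the covariant map $i_* \colon RL_q H_n(Y) \to RL_q H_n(X)$. The core technical input should be a localization long exact sequence for reduced Lawson homology
\[
\cdots \to RL_q H_n(W) \to RL_q H_n(Y) \to RL_q H_n(Y \setminus W) \to \cdots,
\]
compared term-by-term with the corresponding Borel-Moore sequence via the cycle map. Combining this with resolution of singularities, the inductive diagram chase should reduce the surjectivity question on $Y$ to surjectivity on smooth strata of strictly smaller dimension.

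The main obstacle is precisely this inductive surjectivity on lower-dimensional strata: for the induction to close one must know that, for each smooth stratum $Y'$, the cycle map $RL_q H_n(Y') \to H_n(Y'(\R);\Z/2)$ surjects onto the relevant filtration piece. This amounts to a real analog of the connectivity and suspension theorems for complex cycle spaces that underpin the Friedlander-Mazur arguments. Since the classical topology on $Y'(\R)$ is much coarser than the Zariski topology on $Y'$, the naive comparison with the complex-variety case is delicate, and any attempted proof along these lines must confront the genuine possibility that a Borel-Moore class supported on $Y'(\R)$ is not witnessed by an algebraic representative at the required level of $q$. This is where the hardest and most uncertain work must be done.
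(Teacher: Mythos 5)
The statement you are trying to prove is Teh's Conjecture 7.9, and the central point of this paper is precisely that it is \emph{false}: the introduction states, as the main result, ``Conjecture \ref{tco} is false.'' So no proof along your lines (or any other lines) can succeed, and the uneasiness you express in your final paragraph --- that a Borel-Moore class supported on a real stratum ``might not be witnessed by an algebraic representative at the required level of $q$'' --- is exactly the point at which the conjecture breaks.

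Here is the mechanism by which it breaks, and how it undercuts your two-inclusion plan. The paper first shows (Corollary \ref{dge} on the collapse of the coniveau spectral sequence, dualized to Proposition \ref{niveau}) that the niveau filtration on $H_n(X(\R);\Z/2)$ is trivial in the relevant range: $N_j H_n(X(\R);\Z/2) = H_n(X(\R);\Z/2)$ for all $j \geq n$, and is $0$ for $j < n$. Since $0 \leq q \leq n$ implies $2n-q \geq n$, one has $N_{2n-q}H_n(X(\R);\Z/2) = H_n(X(\R);\Z/2)$ for \emph{every} $q$ in the allowed range. Consequently your ``forward'' inclusion $\mathrm{Im}(cyc_{q,n}) \subseteq N_{2n-q}H_n$ is automatic and says nothing, and the conjecture collapses to the single assertion that $cyc_{q,n}$ is \emph{surjective} for all $0 \leq q \leq n$. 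That is precisely the ``reverse'' inclusion of your proposal, and it is false. Example \ref{example} of the paper gives a concrete counterexample: let $Z \subset \P^3_\R$ be a smooth irreducible elliptic curve whose real locus has two components, and let $X^*$ be the blow-up of $\P^3$ along $Z$. Then by the blow-up decomposition (Theorem \ref{decompthrm} and Corollary \ref{bma}), $T_{2,2}(X^*) = \coker(cyc_{2,2}) \cong T_{1,1}(Z) \cong \Z/2 \neq 0$. Your inductive scheme has no way to close precisely because, when you descend to the curve $Z$, the cycle map $RL_1H_1(Z) \to H_1(Z(\R);\Z/2)$ has image $\Z/2$ (since $Z$ is irreducible) but target $(\Z/2)^2$ (one summand per component of $Z(\R)$), so the needed lift does not exist. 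The correct contribution here is not a proof but a disproof, and the decisive inputs are the triviality of the niveau filtration and the Sch\"ulting-type blow-up decomposition.
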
  
Here $N_pH _n(X(\R);\Z/2)$ is the niveau filtration which is the sum over all images  
$$
Im\left(H_n(V(\R);\Z/2)\rightarrow H_n(X(\R);\Z/2)\right)
$$ 
such that $\dim V \leq p$. 

In the complex case, Friedlander-Mazur's conjecture is a very difficult and interesting question. It is known to be true with arbitrary finite coefficients (in particular with $\Z/2$-coefficients), as a corollary of Beilinson-Lichtenbaum conjecture. With integral coefficients, it is known that Suslin's conjecture for a smooth quasi-projective variety $X$ implies Friedlander-Mazur's conjecture for $X$. 

Surprisingly, the real case is totally different. Our main result says that the analog of this conjecture over the field of real numbers is false. 
\begin{theorem}
 Conjecture \ref{tco} is false.
\end{theorem}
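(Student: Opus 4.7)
The plan is to exhibit a family of explicit counterexamples, following the two-pronged strategy announced in the abstract. First, I would establish the structural fact that the niveau filtration on reduced Lawson homology is trivial, i.e.\ the natural filtration on $RL_q H_n(X)$ by support of cycles collapses to a single non-zero step. This would be proved by observing that the Lawson cycle spaces are colimits over supports and arguing that every class in $RL_q H_n(X)$ can be moved onto a sufficiently small subvariety. This structural statement shows, in a clean way, why one cannot expect $Im(cyc_{q,n})$ to correspond to the richly graded niveau filtration on $H_n(X(\R); \Z/2)$ through the cycle map, already casting Conjecture \ref{tco} into serious doubt.

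Second, to produce unconditional counterexamples I would generalize Sch\"ulting's classical theorem, which gives smooth projective real surfaces $X$ for which $H_1(X(\R); \Z/2)_{alg}$ is a proper subgroup of $H_1(X(\R); \Z/2)$. Sch\"ulting's invariant is a cohomological obstruction detecting homology classes not realized by algebraic cycles. I would extend this obstruction to higher-dimensional $X$ and to the reduced Lawson cycle maps $cyc_{q,n}$ for $q \le n$, producing smooth projective real varieties $X$ for which $Im(cyc_{q,n})$ is strictly contained in the predicted $N_{2n-q} H_n(X(\R); \Z/2)$. A convenient way to force $N_{2n-q} H_n(X(\R); \Z/2)$ to equal the full group $H_n(X(\R); \Z/2)$ is to arrange $\dim X \le 2n - q$, so that $V = X$ is an admissible subvariety in the niveau filtration; Sch\"ulting's invariant then obstructs surjectivity of $cyc_{q,n}$ and Conjecture \ref{tco} fails at these indices.

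The main obstacle is the extension of Sch\"ulting's obstruction from the Borel-Haefliger map $cyc_{n,n}$ to the reduced Lawson cycle maps $cyc_{q,n}$ for $q < n$. One needs a comparison between $RL_q H_n(X)$ and the cohomology theory in which Sch\"ulting's invariant naturally lives, compatible enough to conclude that the obstruction continues to vanish on $Im(cyc_{q,n})$. The factorization of the Borel-Haefliger cycle map through the reduced Lawson cycle map, also announced in the abstract, is the key input here: it allows one to pull Sch\"ulting's invariant back along the factorization and thus to obstruct $Im(cyc_{q,n})$ directly, reducing the task to checking that Sch\"ulting's cohomological detector is natural under the comparison map.
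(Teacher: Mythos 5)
Your two-pronged strategy — show the niveau filtration on $H_n(X(\R);\Z/2)$ already equals the whole group at the relevant step, so that Conjecture \ref{tco} reduces to surjectivity of the generalized cycle maps, and then exhibit a failure of surjectivity via Sch\"ulting-type examples — is exactly the paper's strategy. But your justification for the first prong does not constitute a proof. ``Lawson cycle spaces are colimits over supports'' gives you the existence of the niveau filtration, not its triviality, and ``every class can be moved onto a sufficiently small subvariety'' is heuristic at best: a homotopy class in $\pi_{n-k}\mcal{R}_{k}(X)$ is a topological family of $k$-cycles whose total support need not a priori live on a subvariety of dimension $\leq n$. What is actually needed is the collapse of the coniveau spectral sequence, which in turn rests on the semi-local vanishing $RL^{k}H^{q}(\spec R)=0$ for $q\neq 0$ (Theorem \ref{vanishing}); the paper explicitly observes that even the local vanishing of motivic cohomology suffices for this point. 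Your ``convenient'' fallback $\dim X\leq 2n-q$ is also too weak to cover the very examples you invoke: for Sch\"ulting's surfaces one has $n=q=1$, $\dim X=2$, and $2n-q=1<2$, so it does not apply and you genuinely need the niveau triviality there. (It would work for the degenerate case $q=n=\dim X$, e.g.\ a geometrically irreducible curve with at least two real components, and that is in fact the seed of the paper's explicit construction.)

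Your identified ``main obstacle'' — extending Sch\"ulting's obstruction from $cyc_{n,n}$ to $cyc_{q,n}$ with $q<n$ — is a red herring. To falsify the conjecture it suffices to produce a single $(X,q,n)$ with $Im(cyc_{q,n})\neq H_{n}(X(\R);\Z/2)$, and this already happens at $q=n$. There the factorization of Borel--Haefliger through $cyc_{n,n}$ (Theorem \ref{bh}) gives $Im(cyc_{n,n})=H_{n}(X(\R);\Z/2)_{alg}$ on the nose, since $CH_{n}(X)/2\to RL_{n}H_{n}(X)$ is surjective, so any known failure of surjectivity of Borel--Haefliger directly contradicts the conjecture once the niveau triviality is in place. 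The part of your plan that does match the paper's content, but remains unspecified in your sketch, is the actual construction of explicit counterexamples: the paper proves a decomposition theorem (Theorem \ref{decompthrm}) for reduced cycle spaces of blow-ups with smooth center, generalizing Sch\"ulting, and reads off $T_{q,n}(X^*)$ in terms of $T_{*,*}$ of the base and the center; blowing up $\P^{3}$ along an elliptic curve with two real components then gives $T_{2,2}(X^*)=\Z/2\neq 0$.
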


To see the failure of this conjecture we first observe that the niveau filtration on reduced Lawson homology is uninteresting (the case of Borel-Moore homology is $q=0$). Specifically we have that
$$
N_{j}RL_{q}H_{n}(X) = 
\begin{cases}
RL_{q}H_{n}(X) & j\geq n \\
0 & \textrm{else} .
\end{cases}
$$
This is a consequence of Corollary \ref{dge} which asserts that the coniveau spectral sequence for the reduced morphic cohomology collapses. Conjecture \ref{tco} is therefore equivalent to the surjectivity of the generalized cycle maps. It is known that  in general $H_{n}(X(\R);\Z/2) \neq H_{n}(X(\R);\Z/2)_{alg}$ although it is difficult to find explicit examples. In Example \ref{example} and Proposition \ref{genex} we give an explicit class of such examples.  Our examples are based on a decomposition given in Theorem \ref{decompthrm} of the reduced cycle spaces of a blow-up with smooth center. This decomposition is a generalization to reduced cycle spaces of the main result of \cite{S:reelle}.

As another application of thee decomposition we have the following theorem, which is also different from the complex case where the group of divisors modulo algebraic equivalence of an irreducible complex variety always injects into the corresponding homology group. Theorem \ref{intt} gives  examples of thin divisors that are non-trivial in the reduced Lawson group (see Remark \ref{thin}). 
\begin{theorem} \label{intt}
There exists  a smooth real variety $X\stackrel{birational}\sim \mathbb{P}^N _\mathbb{R}$ such that the cycle map on divisors $RL^1H^1(X)\rightarrow H^1(X(\R),\Z/2)$ is not injective.
\end{theorem}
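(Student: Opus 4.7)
The plan is to take $X$ to be the blow-up of $\P^N_{\R}$ along a smooth center $Y$ with $Y(\R)=\emptyset$. Any such blow-up is smooth and birational to $\P^N_{\R}$; emptying out the real locus of $Y$ ensures the blow-up map is a homeomorphism on real points, so that $H^1(X(\R);\Z/2)$ is no larger than $H^1(\P^N(\R);\Z/2)\cong \Z/2$, while Theorem \ref{decompthrm} will inflate $RL^1 H^1(X)$ with a new summand supported on the exceptional divisor. The non-injectivity will then follow from a simple rank count.

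Concretely, I would choose $Y\subset \P^N_{\R}$ smooth projective of codimension at least $2$ with $Y(\R)=\emptyset$, for instance the anisotropic conic $\{x_0^2+x_1^2+x_2^2=0,\ x_3=\cdots=x_N=0\}$ inside $\P^N_{\R}$ for $N\ge 3$. Let $\pi:X=\mathrm{Bl}_Y \P^N_{\R}\to \P^N_{\R}$ be the blow-up and $E\subset X$ the exceptional divisor. Since $\pi$ is an isomorphism over the complement of $Y$ and $Y(\R)=\emptyset$, the map on real points is a homeomorphism and $E(\R)=\emptyset$; in particular
$$H^1(X(\R);\Z/2)\ \cong\ H^1(\P^N(\R);\Z/2)\ \cong\ \Z/2.$$

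Next I would apply the decomposition Theorem \ref{decompthrm} to the blow-up $\pi$. By analogy with the classical identity $\mathrm{Pic}(\mathrm{Bl}_Y Z)\cong\mathrm{Pic}(Z)\oplus\Z[E]$, reading off the divisor summand of the decomposition should yield
$$RL^1 H^1(X)\ \cong\ RL^1 H^1(\P^N_{\R})\oplus W,$$
where $W$ is a non-zero summand containing the class $[E]$. This forces $\dim_{\Z/2} RL^1 H^1(X)\ge 2$, while the target of the cycle map has dimension $1$, so the cycle map cannot be injective; the class $[E]$ is then an explicit non-zero element of the kernel and is a ``thin'' divisor in the sense of Remark \ref{thin}.

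The main obstacle I anticipate is extracting from Theorem \ref{decompthrm} the precise form of the summand $W$ and verifying that it is non-trivial even when $Y(\R)=\emptyset$. This is the point of departure from the complex case: the reduced cycle spaces of $Y$, together with the $\P^{c-1}$-bundle structure of $E\to Y$ (with $c=\codim Y$), should still contribute a genuine new divisor class on $X$, and this class must survive in $RL^1 H^1(X)$ even though the topological real locus has been unchanged by the blow-up. Making this computation explicit --- essentially the generalization of Sch\"ulting's argument announced before Theorem \ref{intt} --- is the heart of the proof.
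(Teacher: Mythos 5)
Your approach is exactly the paper's: Theorem \ref{intt} is proved in the paper via Proposition \ref{Kenex}, which blows up $\P^N_\R$ along a smooth geometrically irreducible curve $Z$ with $Z(\R)=\emptyset$ and reads off the nontrivial kernel from the blow-up decomposition. Your anisotropic conic is a perfectly good concrete choice of such a $Z$.

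The one thing you flag as an obstacle --- verifying that the new summand $W$ is nonzero even though $Y(\R)=\emptyset$ --- closes immediately and is worth making precise. Under Poincar\'e duality $RL^1H^1(X)\cong RL_{N-1}H_{N-1}(X)$, and Theorem \ref{decompthrm} with $q=N-1$ and $d=\codim Y=N-1$ gives
$$
RL_{N-1}H_{N-1}(X)\cong RL_{N-1}H_{N-1}(\P^N_\R)\oplus\pi_0\mcal{R}_1(Y)\oplus\pi_0\mcal{R}_2(Y)\oplus\cdots\oplus\pi_0\mcal{R}_{N-2}(Y).
$$
Since $Y$ is a curve, $\mcal{R}_i(Y)=0$ for $i\geq 2$. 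The crucial point is that $\pi_0\mcal{R}_1(Y)\cong\Z/2$ is a purely algebraic statement that has nothing to do with $Y(\R)$: since $Y_\C$ is irreducible, $\mcal{Z}_1(Y_\C)\cong\Z$ generated by the fundamental class, which is $G$-fixed, so $\mcal{Z}_1(Y_\C)^G\cong\Z$ and $\mcal{Z}_1(Y_\C)^{av}\cong 2\Z$, giving $\mcal{R}_1(Y)\cong\Z/2$. Thus $W\cong\Z/2$ and $\dim_{\Z/2}RL^1H^1(X)=2>1=\dim_{\Z/2}H^1(X(\R);\Z/2)$, so the cycle map is not injective. (The paper packages this as Corollary \ref{bma}, which gives $K_{N-1,N-1}(X)\cong K_{1,1}(Y)\cong\Z/2$ directly, bypassing the rank count, but the substance is the same.)
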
 

The collapse of the coniveau spectral sequence for reduced morphic cohomology is a consequence of local vanishing of motivic cohomology in degrees larger than the weight together with the vanishing theorem proved in \cite{HV:VT}. For the purposes of seeing that Conjecture \ref{tco} is false one does not need the full strength of the collapsing, it suffices to use only the local vanishing of motivic cohomology. However, an interesting consequence of the collapse of this spectral sequence is that we can compute reduced morphic cohomology as the sheaf cohomology 
$$
H^{n}_{Zar}(X;\mcal{RL}^{q}\mcal{H}^{0})= RL^{q}H^{n}(X).
$$ 
As a consequence, we identify a family of birational invariants given by 
$$
H^{0}_{Zar}(X;\mcal{RL}^{q}\mcal{H}^{0})= RL^{q}H^{0}(X),
$$
for any $q\geq 0$. In case $q=\dim(X)$ we obtain that the number $s$ of connected components of $X(\mathbb{R})$ is a birational invariant of an algebraic nature (i.e. $RL^{\dim(X)}H^{0}(X)=(\Z/2)^s$).
 The purely algebraic nature of $s$ forms part of the main result of \cite{CTP:real}, where they use \'etale cohomology. The relation between reduced morphic cohomology and \'etale cohomology is discussed in the final section. As an application of these birational invariants  we compute reduced Lawson homology of a real rational surface in Corollary \ref{comp}. 
 
In Section \ref{cyclemaps} we discuss various cycle maps. We show that there is basically one cycle map from the mod-$2$ motivic cohomology to mod-$2$ singular cohomology of the space of real points.  As a consequence we see in Theorem \ref{bh} that the Borel-Haeflinger cycle map factors through this cycle map.

\section{Preliminaries}
Let $Y$ be a projective complex variety and $\mcal{C}_q(Y)$ the Chow variety of effective $q$-cycles on $Y$. Write $\mcal{Z}_q(Y) = (\mcal{C}_q(Y)(\C))^{+}$ for the group completion of this monoid. The group completion is done algebraically and $\mcal{Z}_q(Y)$ is given the quotient topology. It turns out that this naive group completion is actually a homotopy group completion \cite{FG:cyc}, \cite{LF:qproj}. When $U$ is quasi-projective with projectivization $U\subseteq \overline{U}$ then define $\mcal{Z}_{q}(U) = \mcal{Z}_{q}(\overline{U})/\mcal{Z}_{q}(U_{\infty})$ where $U_{\infty} = \overline{U}\backslash U$ (and the quotient is a group quotient). This definition is independent of choice of projectivization \cite{LF:qproj}, \cite{FG:cyc}.

If $X$ is a real variety then $G=\Z/2$  acts on $\mcal{Z}_{q}(X_{\C})$ via complex conjugation. The space of real cycles on $X$ is defined to be the subgroup $\mcal{Z}_{q}(X_{\C})^{G}$ of cycles invariant under conjugation. Write $\mcal{Z}_{q}(X_{\C})^{av}$ for the subgroup generated by cycles of the form $\alpha+\overline{\alpha}$. The space of reduced cycles on $X$ is defined to be the quotient group
$$
\mcal{R}_{q}(X) = \frac{\mcal{Z}_{q}(X_{\C})^{G}}{\mcal{Z}_{q}(X_{\C})^{av}}.
$$

\begin{definition}[\cite{Teh:real}]
Let $X$ be a quasi-projective real variety. The \textit{reduced Lawson homology} of $X$ is defined by 
$$
RL_{q}H_{q+i}(X) = \pi_{i}\mcal{R}_{q}(X).
$$
\end{definition}

When $q=0$ we have that $\mcal{R}_{0}(X) = \mcal{Z}_{0}(X(\R))/2$ so by the Dold-Thom theorem $RL_{0}H_{i}(X) = H_{i}(X(\R);\Z/2)$ is the Borel-Moore homology of $X(\R)$. In general $RL_{q}H_{q+i}(X)$ are all $\Z/2$-vector spaces. It is not known whether these are finitely-generated vector spaces or not however we do have the following vanishing theorem.
\begin{theorem}[\cite{HV:VT}]
 Let $X$ be a quasi-projective real variety. Then
$$
RL_{q}H_{q+i}(X) = 0
$$
 if $q+i>\dim(X)$.
\end{theorem}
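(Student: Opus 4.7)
The plan is to reduce to the case of smooth projective $X$ and then use an equivariant-homotopy-theoretic description of the reduced cycle space to deduce the vanishing from a dimension count on $X(\R)$.

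First I would set up a localization long exact sequence for the functor $X\mapsto \pi_*\mcal{R}_q(X)$ associated with a closed-open decomposition $Z\hookrightarrow X\hookleftarrow U$. The input is the analogous localization sequence for the complex cycle spaces $\mcal{Z}_q((-)_\C)$ due to Lima-Filho and Friedlander-Gabber, together with the compatibility of the two operations $(-)^G$ and $(-)^{av}$ with the maps in that sequence. Combined with Noetherian induction and resolution of singularities for real varieties, this reduces the problem to the case where $X$ is smooth and projective.

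For such $X$, I would identify $\mcal{R}_q(X)$ with (the underlying space of) the geometric fixed points $\Phi^G\mcal{Z}_q(X_\C)$ of the $G=\Z/2$-space of complex cycles, where $G$ acts by complex conjugation. The given definition $\mcal{R}_q(X)=\mcal{Z}_q(X_\C)^G/\mcal{Z}_q(X_\C)^{av}$ is precisely the standard model for geometric fixed points. Combined with the equivariant Dold-Thom theorem of dos Santos and Lima-Filho, this expresses $RL_qH_{q+i}(X)$ as a Bredon-type equivariant cohomology group of $X(\C)$ with $\Z/2$-coefficients in a suitable weight.

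Given the equivariant interpretation, the desired vanishing for $q+i>\dim X$ becomes a dimension statement: $X(\C)$ is a $G$-CW complex whose fixed locus $X(\R)$ has real dimension $\dim X$, and the Bredon cohomology in the relevant weight vanishes above this real dimension. The principal technical difficulty is the second step, namely setting up the geometric-fixed-points identification carefully enough that the naive homotopy groups $\pi_i\mcal{R}_q(X)$ are really identified with the Bredon groups one wants to control. A secondary difficulty lies in the first step, since the operation $(-)^G/(-)^{av}$ is not exact on the nose and so the long exact sequence must be assembled with some care; one expects the averaged subgroup to be governed by transfer maps from $X_\C$ to $X$, and it is this identification that keeps the localization sequence under control.
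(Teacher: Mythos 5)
The proposal goes by a genuinely different route from what \cite{HV:VT} does, but it has a gap that is not merely technical --- a step in the dimension count, if filled in the way the sketch suggests, proves too much and contradicts the main results of the present paper.

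Your second step is the identification $\mcal{R}_q(X)\simeq\Phi^G\mcal{Z}_q(X_\C)$. At the level of abelian groups, $M\mapsto M^G/M^{av}$ computes the Tate group $\hat H^0(G;M)$, and the cofiber of the transfer $E_{hG}\to E^G$ of a genuine $G$-spectrum is $\Phi^GE$; these are not formally the same thing, and passing from one to the other requires knowing that the \emph{topological} quotient $\mcal{Z}_q(X_\C)^G/\mcal{Z}_q(X_\C)^{av}$ models the cofiber of the norm at the spectrum level. You flag this as the ``principal technical difficulty,'' which is fair, but it is a large part of the argument, not a routine verification; it is precisely the place where one must control the difference between the image $\mcal{Z}_q(X_\C)^{av}$ of $N$ and the source $\mcal{Z}_q(X_\C)$ of $N$, and this is where the non-surjectivity of the cycle map (the whole subject of the present paper) hides.

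More seriously, the final dimension count is not correct as written. You say ``$X(\C)$ is a $G$-CW complex whose fixed locus $X(\R)$ has real dimension $\dim X$, and the Bredon cohomology in the relevant weight vanishes above this real dimension.'' Bredon cohomology of the $G$-CW complex $X(\C)$ vanishes above $\dim_{\R}X(\C)=2\dim X$, not above $\dim X(\R)$; to get the bound $\dim X$ you must already have localized at the fixed locus, i.e.\ you must already be in the geometric-fixed-points world, and then the content of the theorem is entirely in where the weight shift by $q$ comes from. But if you make the natural guess that $\Phi^G$ interacts with the $\C^q$-suspension by extracting the fixed part (so that $\Phi^G$ of the $q(1+\sigma)$-loop space is the $q$-loop space of $\Phi^G\mcal{Z}_0(X_\C)\simeq\mcal{R}_0(X)$), you would conclude $\mcal{R}_q(X)\simeq\Omega^q\mcal{R}_0(X)$ and hence that the generalized cycle map $RL_qH_n(X)\to H_n(X(\R);\Z/2)$ is an isomorphism for all $q$. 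This is false: Propositions~\ref{genex} and \ref{Kenex} produce smooth projective $X$ for which that map has nonzero cokernel and nonzero kernel. The point is that $\mcal{Z}_q(X_\C)$ is \emph{not} a loop space of $\mcal{Z}_0(X_\C)$; the $s$-map is only an equivalence after stabilization, and the reduced real theory specifically records the failure of this. So the ``dimension count'' cannot be the simple statement you describe; whatever argument one makes must explain why the bound is $\dim X$ while simultaneously \emph{not} forcing the cycle map to be an isomorphism, and your sketch does not engage with that tension.

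The first step (localization long exact sequence in $q$ for $\mcal{R}_q$, induction on dimension, and reduction to the smooth projective case) is standard and unproblematic; indeed the localization sequence for $\mcal{R}_q$ is recorded in \cite[Corollary 3.14]{Teh:real}. But the core of the vanishing theorem is carried by the cohomological side: in the notation of this paper one reduces via Poincar\'e duality (Theorem~\ref{pd}) to the vanishing $\pi_s\widetilde{\mcal{R}}^k(X)=0$ for $s>k$, and the route to that in \cite{HV:VT} is through the two fundamental fiber sequences linking $\widetilde{\mcal{Z}}^k/2(X_\C)^G$, $\widetilde{\mcal{Z}}^k/2(X_\C)$ and $\widetilde{\mcal{Z}}^k/2(X_\C)^{av}$ and a comparison with (equivariant and ordinary) morphic cohomology, not through a geometric-fixed-points identification. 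If you want to pursue the equivariant picture, the work is in showing \emph{how much} of the $G$-spectrum $\mcal{Z}_q(X_\C)$ is seen by $\Phi^G$; the answer cannot be ``all of it,'' for the reasons above.
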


There is also a space $\mcal{R}^{q}(X) = \mcal{Z}^{q}(X_{\C})^{G}/\mcal{Z}^{q}(X_{C})^{av}$ of reduced algebraic cocycles on $X$ when $X$ is normal and projective. We refer to \cite{Teh:real} for the details of its construction. It is convenient to extend this definition to quasi-projective normal varieties, which is done in \cite{HV:VT} although not introduced formally as such an extension. We avoid difficulties with point-set topology by giving the extension as a simplicial abelian group. Define the simplicial abelian group of reduced cocyles on a quasi-projective normal real variety 
$$
\widetilde{\mcal{R}}^{q}(X) = \frac{\sing_{\bullet}{Z}/2^{q}(X_{\C})^{G}}{\sing{Z}_{\bullet}/2^{q}(X_{\C})^{av}}.
$$
 If $X$ is a projective, normal real variety then $\widetilde{R}^{q}(X) \xrightarrow{\wkeq} \sing_{\bullet}\mcal{R}^{q}(X)$ is a homotopy equivalence \cite[Lemma 6.7]{HV:VT}.
\begin{definition}[\cite{Teh:real}]
Let $X$ be a normal quasi-projective variety. Define the reduced morphic cohomology of $X$ by 
$$
RL^{q}H^{q-i}(X) = \pi_{i}\widetilde{\mcal{R}}^{q}(X).
$$
\end{definition}

The reduced Lawson homology and reduced morphic cohomology are related by a Poincare duality. This is proved in \cite[Theorem 6.2]{Teh:real} for smooth projective varieties. We give a quick proof below which applies to both the projective and quasi-projective case. If $T$ is a topological abelian group we write $\widetilde{T} = \sing_{\bullet}T$ for the associated simplicial abelian group. If $M$ is a $G$-module and $\sigma$ is the nontrivial element of $G$ we write $N= 1+\sigma$ and define $M^{av}= Im(N)$. If in addition $M$ is $2$-torsion then we have the two fundamental short exact sequences of abelian groups
$0\to M^{G} \to M \xrightarrow{N} M^{av} \to 0$  and $0\to M^{av} \to M^{G} \to M^{G}/M^{av} \to 0 $.  

\begin{theorem}\label{pd}
Let $X$ be a smooth quasi-projective real variety of dimension $d$. The inclusion
$$
\widetilde{\mcal{R}}^{q}(X) \to \widetilde{\mcal{R}}_{d}(X\times \A^{q})
$$ 
is a homotopy equivalence of simplicial abelian groups. 
Consequently there is a natural isomorphism $RL^{q}H^{n}(X)= RL_{d-q}H_{d-q}(X)$.
\end{theorem}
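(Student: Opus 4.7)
The strategy is to deduce the asserted equivalence from Friedlander--Lawson duality on the underlying complex cycle spaces via a $G$-equivariant descent, where $G=\Z/2$ acts by complex conjugation. Friedlander--Lawson duality provides, for any smooth complex quasi-projective variety $Y$ of dimension $d$, a natural homotopy equivalence of topological abelian groups $\mcal{Z}^{q}(Y) \xrightarrow{\simeq} \mcal{Z}_{d}(Y\times\A^{q})$, sending a codimension-$q$ cocycle to its graph. Specialized to $Y=X_{\C}$ this map is equivariant with respect to complex conjugation, and applying $\sing_\bullet$ produces a $G$-equivariant weak equivalence of simplicial $G$-modules $f:M\to M'$.

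To descend $f$ to the reduced quotients $M^{G}/M^{av}\to (M')^{G}/(M')^{av}$, I would apply the two short exact sequences recalled immediately before the theorem statement to both $M$ and $M'$, compare via $f$, and run the long exact sequences of homotopy groups through the five lemma. This reduces the problem to showing that the induced maps $f^{G}:M^{G}\to (M')^{G}$ and $f^{av}:M^{av}\to (M')^{av}$ are weak equivalences. The first is precisely the $G$-equivariant (``real'') refinement of Friedlander--Lawson duality---the statement that the equivalence on complex cycle spaces restricts to an equivalence on the real (conjugation-invariant) subspaces---which is the form of duality used in \cite[Theorem 6.2]{Teh:real} in the projective case. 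The second follows from another five-lemma argument applied to the short exact sequence $0\to \ker(1+\sigma) \to M \xrightarrow{1+\sigma} M^{av} \to 0$ and its analogue for $M'$, again using the equivariant duality to handle the anti-invariant subspace $\ker(1+\sigma)$.

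The Poincar\'e duality isomorphism $RL^{q}H^{n}(X)\iso RL_{d-q}H_{d-n}(X)$ then follows by taking $\pi_i$ of the equivalence $\widetilde{\mcal{R}}^{q}(X)\simeq \widetilde{\mcal{R}}_{d}(X\times\A^{q})$ and invoking homotopy invariance of reduced Lawson homology to eliminate the $\A^{q}$ factor.

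The principal obstacle is the descent step: neither $(-)^{G}$ nor $(-)^{av}$ is exact on the category of $G$-modules, so a $G$-equivariant weak equivalence of simplicial $G$-modules does not automatically induce weak equivalences on invariants or averages. The essential new input required is an equivariant (``real'') version of Friedlander--Lawson duality in the quasi-projective setting, generalizing what is used in the projective case in \cite{Teh:real}; once this is in hand, the homotopy-theoretic book-keeping described above propagates it to the reduced quotients defining $\widetilde{\mcal{R}}^{q}(X)$ and $\widetilde{\mcal{R}}_{d}(X\times\A^{q})$.
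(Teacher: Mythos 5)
Your proposal matches the paper's proof: the paper also establishes the equivalence by comparing the two fundamental short exact sequences (viewed as fiber sequences of simplicial abelian groups) for $X$ and $X\times\A^{q}$, using Friedlander's duality \cite[Theorem 5.2]{F:algco} on the full mod-$2$ cycle spaces together with the equivariant duality $\widetilde{\mcal{Z}}^{q}/2(X_{\C})^{G}\wkeq\widetilde{\mcal{Z}}_{d}/2(X_{\C}\times\A^{q}_{\C})^{G}$ from \cite[Corollary 4.20]{HV:VT} — the latter being exactly the ``essential new input'' you correctly flagged — and then running the long exact sequence/five-lemma to descend first to $(-)^{av}$ and then to the reduced quotient. (One small terminological slip: for $2$-torsion $G$-modules $\ker(1+\sigma)=M^{G}$ is the invariant, not the anti-invariant, subspace, so your second five-lemma step uses $f^{G}$ and $f$, which is fine.)
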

\begin{proof}
 This follows immediately from consideration of the following comparison diagrams of homotopy fiber sequences of simplicial abelian groups, where the displayed homotopy equivalences follow from \cite[Theorem 5.2]{F:algco} and  \cite[Corollary 4.20]{HV:VT}

 \begin{equation*}
 \xymatrix{
\widetilde{\mcal{Z}}^{q}/2(X_{\C})^{G} \ar[r]\ar[d]^{\wkeq} & \widetilde{\mcal{Z}}^{q}/2(X_{\C}) \ar[d]^{\wkeq}\ar[r]^{N} & \widetilde{\mcal{Z}}^{q}/2(X_{\C})^{av}\ar[d]  \\
\widetilde{\mcal{Z}}_{d}/2(X_{\C}\times_{\C}\A_{\C}^{q}))^{G} \ar[r] & \widetilde{\mcal{Z}}_{d}/2(X_{\C}\times_{\C}\A_{\C}^{q})) \ar[r]^{N} & \widetilde{\mcal{Z}}_{d}/2((X_{\C}\times_{\C}\A_{\C}^{q})^{av}
}
\end{equation*}
and
\begin{equation*}
 \xymatrix{
\widetilde{\mcal{Z}}^{q}/2(X_{\C})^{av} \ar[r]\ar[d] & \widetilde{\mcal{Z}}^{q}/2(X_{\C})^{G} \ar[r]\ar[d]^{\wkeq} & \widetilde{\mcal{R}}^{q}(X) \ar[d]\\
\widetilde{\mcal{Z}}_{d}/2(X_{\C}\times_{\C}\A^{q}_{\C})^{av} \ar[r] & \widetilde{\mcal{Z}}_{d}/2(X_{\C}\times_{\C}\A^{q}_{\C})^{G} \ar[r] & \widetilde{\mcal{R}}_{d}(X\times\A^{q}_{\R}) .
}
\end{equation*}

\end{proof}

The inclusion of algebraic cocycles into topological cocycles gives a generalized cycle map
$$
cyc_{q,n}:RL^{q}H^{n}(X) \to H^{n}(X(\R);\Z/2).
$$
If $X$ is smooth and $q\geq \dim X$ then the cycle map $cyc_{q,n}$ is an isomorphism. For $X$ projective this follows from \cite[Corollary 6.5, Theorem 8.1]{Teh:real} (the results there are stated under the assumption that $X(\R)$ is nonempty and connected but this is unnecessary). The isomorphism for projective varieties implies the isomorphism for quasi-projective varieties (for example by using cohomology with supports and an argument as in \cite[Corollary 4.2]{HV:AHSS}).

There are operations called the $s$-map in both reduced Lawson homology and reduced morphic cohomology $s:RL_{q}H_{n}(X)\to RL_{q-1}H_{n}(X)$ and $s:RL^{q}H^{n}(X)\to RL^{q+1}H^{n}(X)$.
Iterated compositions of $s$-maps give rise to generalized cycle maps in reduced Lawson homology
$$
cyc_{q,n}:RL_{q}H_{n}(X) \xrightarrow{s} RL_{q-1}H_{n}(X) \xrightarrow{s} \cdots \xrightarrow{s} RL_{0}H_{n}(X) = H_{n}(X(\R);\Z/2).
$$ 
The $s$-map in morphic cohomology is compatible with the $s$-map in the sense that $cyc_{q,n}$ agrees with the composition
$$
cyc_{q,n}:RL^{q}H^{n}(X) \xrightarrow{s} RL^{q+1}H^{n}(X) \xrightarrow{cyc_{q,n}} H^{n}(X(\R);\Z/2).
$$

Let $X$ be a normal quasi-projective real variety and $Z\subseteq X$ a closed subvariety. The \textit{reduced morphic cohomology with supports} is defined in the usual way with $RL^qH^{q-i}(X)_Z = \pi_{i}\widetilde{\mcal{R}}^{q}(X)_{Z}$ where $\widetilde{\mcal{R}}^{q}(X)_{Z} = \hofib (\widetilde{\mcal{R}}^{q}(X) \to \widetilde{\mcal{R}}^{q}(X-Z) )$.

\begin{theorem}[Cohomological purity for reduced morphic cohomology]\label{reducedpurity} 
Let $X$ be a smooth, quasi-projective real variety of dimension $d$ and $Z\subset X$  a closed smooth subvariety of codimension $p$. There are homotopy equivalences $\widetilde{\mathcal{R}}^q(X)_Z\wkeq \widetilde{R}^{q-p}(Z)$. These induce natural isomorphisms 
$$
RL^qH^n(X)_Z = RL^{q-p}H^{n-p}(Z).
$$
\end{theorem}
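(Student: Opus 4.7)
My plan is to reduce the statement to a localization fiber sequence in reduced Lawson homology via Poincar\'e duality (Theorem \ref{pd}). For each smooth quasi-projective real variety $Y$ of dimension $d_Y$, Theorem \ref{pd} combined with iterated homotopy invariance $\widetilde{\mcal{R}}_r(Y\times\A^1)\wkeq\widetilde{\mcal{R}}_{r-1}(Y)$ yields a natural equivalence $\widetilde{\mcal{R}}^r(Y)\wkeq\widetilde{\mcal{R}}_{d_Y-r}(Y)$. Applied to $Y=X,\,U:=X\setminus Z,\,Z$ (of dimensions $d,\,d,\,d-p$) this gives
$$
\widetilde{\mcal{R}}^{q}(X)\wkeq\widetilde{\mcal{R}}_{d-q}(X),\quad
\widetilde{\mcal{R}}^{q}(U)\wkeq\widetilde{\mcal{R}}_{d-q}(U),\quad
\widetilde{\mcal{R}}^{q-p}(Z)\wkeq\widetilde{\mcal{R}}_{d-q}(Z).
$$
Since $\widetilde{\mcal{R}}^q(X)_Z=\hofib(\widetilde{\mcal{R}}^q(X)\to\widetilde{\mcal{R}}^q(U))$ by definition, the equivalence $\widetilde{\mcal{R}}^q(X)_Z\wkeq\widetilde{\mcal{R}}^{q-p}(Z)$ will follow once I exhibit a homotopy fiber sequence
$$
\widetilde{\mcal{R}}_{d-q}(Z) \to \widetilde{\mcal{R}}_{d-q}(X) \to \widetilde{\mcal{R}}_{d-q}(U),
$$
i.e.\ a localization sequence in reduced Lawson homology for the smooth closed embedding $Z\hookrightarrow X$ with complement $U$. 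Passing to $\pi_i$ then recovers the stated cohomological isomorphism $RL^qH^n(X)_Z=RL^{q-p}H^{n-p}(Z)$ with $n=q-i$.

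To produce this reduced localization sequence I would descend from the classical Lima-Filho localization fiber sequence
$\widetilde{\mcal{Z}}_{d-q}/2(Z_\C)\to\widetilde{\mcal{Z}}_{d-q}/2(X_\C)\to\widetilde{\mcal{Z}}_{d-q}/2(U_\C)$
for mod-$2$ complex cycles, regarded as a sequence of simplicial $G$-modules with $G=\Z/2$ acting by complex conjugation. Taking $G$-fixed subgroups, $G$-averaged subgroups, and reduced quotients term-wise yields a $3\times 3$ grid whose horizontal rows are the two fundamental short exact sequences $0\to M^G\to M\xrightarrow{N} M^{av}\to 0$ and $0\to M^{av}\to M^G\to M^G/M^{av}\to 0$ recalled in the proof of Theorem \ref{pd}. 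The middle column is the Lima-Filho fiber sequence, and once either the $G$-fixed or the $G$-averaged column is shown to be a fiber sequence of simplicial abelian groups, a nine-lemma style argument applied to the resulting long exact sequences on $\pi_*$ forces the reduced column to be a fiber sequence as well.

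The main obstacle is therefore the real (equivariant) Lima-Filho localization, i.e.\ proving that the $G$-fixed mod-$2$ complex cycle simplicial abelian groups form a fiber sequence for a smooth closed embedding with smooth open complement. This is in the same spirit as the equivariant cycle constructions used in the proof of Theorem \ref{pd} and should be extractable from the $G$-equivariant version of \cite[Corollary 4.20]{HV:VT} together with Lima-Filho's underlying topological localization argument, adapted to the $G$-fixed setting by the same comparison of homotopy fiber sequences displayed there. The auxiliary reduced homotopy invariance $\widetilde{\mcal{R}}_r(Y\times\A^1)\wkeq\widetilde{\mcal{R}}_{r-1}(Y)$ used at the outset is obtained by the identical $3\times 3$ descent applied to Friedlander-Gabber homotopy invariance for mod-$2$ complex cycles, and is routine once the equivariant version of that invariance is in hand.
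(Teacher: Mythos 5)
Your reduction is exactly the paper's: convert the cohomological purity statement to a localization statement in reduced Lawson homology via Poincar\'e duality (Theorem \ref{pd} plus homotopy invariance), and then observe that the homotopy fiber of $\widetilde{\mcal{R}}_{d-q}(X)\to\widetilde{\mcal{R}}_{d-q}(U)$ is $\widetilde{\mcal{R}}_{d-q}(Z)$. The only difference is the last step. You flag the localization fiber sequence for reduced cycle spaces as your ``main obstacle'' and propose to build it from Lima-Filho's complex localization sequence by a $3\times 3$ descent through $G$-fixed, $G$-averaged, and reduced columns. That descent scheme is plausible in outline, but you stop short of actually establishing the $G$-fixed (or averaged) column as a fiber sequence, gesturing instead at an ``equivariant version of \cite[Corollary 4.20]{HV:VT}'' that is not what that reference proves (it concerns homotopy invariance/Poincar\'e duality, not localization). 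In other words, your argument as written has a genuine unproved step at precisely the point you identify.

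The good news is that this step is a known result: the localization quasi-fibration for reduced Lawson homology is \cite[Corollary 3.14]{Teh:real}, and the paper simply cites it. So your overall route is the correct one and matches the paper; you should replace your proposed $3\times 3$ re-derivation with the direct citation to Teh, at which point the proof is complete in two lines: Poincar\'e duality (Theorem \ref{pd}) converts $\widetilde{\mcal{R}}^q(X)_Z$, $\widetilde{\mcal{R}}^q(U)$, $\widetilde{\mcal{R}}^{q-p}(Z)$ into the reduced Lawson groups in dimension $d-q$, and the reduced localization sequence identifies the homotopy fiber with $\widetilde{\mcal{R}}_{d-q}(Z)$. If you did want to carry out the $3\times 3$ argument from scratch, you would additionally need to justify that the naive algebraic quotient $\mcal{Z}_q(X_\C)^G/\mcal{Z}_q(Z_\C)^G$ has the correct homotopy type relative to $\mcal{Z}_q(U_\C)^G$ (surjectivity on $\pi_0$ is not automatic after taking fixed points), which is exactly the content Teh has to establish; it is not a formal consequence of the complex sequence.
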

\begin{proof} 
This follows from the localization sequence for reduced Lawson homology 
\cite[Corollary 3.14]{Teh:real} together with Poincare duality between reduced Lawson homology and reduced morphic cohomology  (see \cite[Theorem 6.2]{Teh:real} and Theorem \ref{pd}). 
\end{proof}

Recall that a presheaf $F(-)$ of cochain complexes satisfies \textit{Nisnevich descent} provided that for any smooth $X$, any \'etale map $f:Y\to X$, and open embedding $i:U\subseteq X$ such that $f: Y - f^{-1}(U)\to X - U$ is an isomorphism, we have
a Mayer-Vietoris exact triangle (in the derived category of abelian groups):
\begin{equation*}
 F(X) \to F(Y)\oplus F(U) \to F(f^{-1}(V)) \to F(X)[1].
\end{equation*}

 \begin{corollary} \label{boprops}
The presheaf $\widetilde{\mathcal{R}}^q(-)$ is homotopy invariant theory and satisfies Nisnevich descent.
 \end{corollary}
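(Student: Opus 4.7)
The plan is to deduce both properties from the Poincar\'e duality equivalence of Theorem \ref{pd} combined with the localization sequence and homotopy invariance of reduced Lawson homology. For smooth quasi-projective $X$ of dimension $d$, Theorem \ref{pd} produces an equivalence $\widetilde{\mcal{R}}^q(X) \wkeq \widetilde{\mcal{R}}_d(X\times\A^q)$, and I would first record that this equivalence is natural along flat pullbacks---in particular along open immersions $j: U\hookrightarrow X$ and \'etale maps $f: Y\to X$ between smooth varieties of the same dimension. This naturality can be read off from the comparison diagrams in the proof of Theorem \ref{pd}: the top-left equivalences (from \cite[Theorem 5.2]{F:algco} and \cite[Corollary 4.20]{HV:VT}) are themselves induced by the flat pullback on cycle spaces $\widetilde{\mcal{Z}}^q/2(X_\C) \to \widetilde{\mcal{Z}}_d/2(X_\C\times_\C\A^q_\C)$, which is manifestly functorial for such maps.

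For homotopy invariance, applying this naturality to the projection $X\times\A^1 \to X$ identifies the map $\widetilde{\mcal{R}}^q(X) \to \widetilde{\mcal{R}}^q(X\times\A^1)$ with the flat pullback $\widetilde{\mcal{R}}_d(X\times\A^q) \to \widetilde{\mcal{R}}_{d+1}(X\times\A^{q+1})$. The latter is a weak equivalence by the homotopy invariance of reduced Lawson homology, which I would obtain from the corresponding property of complex Lawson homology (applied to $X_\C\times\A^q_\C$, cf.\ \cite{F:algco}) by passing through the two short exact sequences of $G$-modules recalled before Theorem \ref{pd}.

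For Nisnevich descent, I would take an elementary distinguished Nisnevich square with \'etale $f: Y\to X$, open $U\subseteq X$, $Z = X - U$, $W = f^{-1}(Z)$, and $f|_W: W \xrightarrow{\iso} Z$. The descent property is equivalent to showing that the induced map on homotopy fibers $\widetilde{\mcal{R}}^q(X)_Z \to \widetilde{\mcal{R}}^q(Y)_W$ is a weak equivalence. By the naturality above, this map is identified with the comparison of localization fibers for the pairs $(X\times\A^q, U\times\A^q)$ and $(Y\times\A^q, f^{-1}(U)\times\A^q)$; by \cite[Corollary 3.14]{Teh:real} those fibers are $\widetilde{\mcal{R}}_d(Z\times\A^q)$ and $\widetilde{\mcal{R}}_d(W\times\A^q)$, and the induced map is pullback along the isomorphism $f|_W \times \mathrm{id}$, hence an isomorphism of simplicial abelian groups.

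The main obstacle I expect is precisely the naturality of the Poincar\'e duality equivalence along flat pullbacks, stated strongly enough to identify full homotopy fiber sequences rather than just their terms. Once that compatibility is in hand, both conclusions follow formally from the localization sequence and homotopy invariance of reduced Lawson homology established in \cite{Teh:real} and \cite{HV:VT}.
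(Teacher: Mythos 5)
Your argument is correct and it works, but it takes a visibly different route from the paper's, which is a two-line citation proof: homotopy invariance is simply quoted from \cite[Theorem~5.13]{Teh:real}, and Nisnevich descent is deduced ``immediately'' from the cohomological purity statement of Theorem~\ref{reducedpurity}. You instead reconstruct both properties from first principles, routing everything through Poincar\'e duality (Theorem~\ref{pd}), the localization fibration for reduced Lawson homology, and homotopy invariance on the Lawson-homology side. For homotopy invariance your derivation is a genuine alternative to citing Teh: identifying $\widetilde{\mcal{R}}^q(X)\to\widetilde{\mcal{R}}^q(X\times\A^1)$ with a flat pullback on reduced cycle spaces via duality and then using $\A^1$-invariance of the homology theory is a clean self-contained argument, at the price of needing the naturality of the duality equivalence that you flag; that naturality is indeed visible in the comparison diagrams of Theorem~\ref{pd}, since all the maps there are induced by flat pullback on cycle groups.

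For Nisnevich descent, your unpacked version is actually slightly sharper than the paper's phrasing. Theorem~\ref{reducedpurity} is stated only for $Z\subset X$ \emph{smooth}, whereas the closed complement $Z = X - U$ in a distinguished Nisnevich square is typically singular. What one really needs, and what your proof supplies directly, is the intermediate identification $\widetilde{\mcal{R}}^q(X)_Z \wkeq \widetilde{\mcal{R}}_d(Z\times\A^q)$ via duality plus the localization sequence \cite[Corollary~3.14]{Teh:real}, which holds for \emph{arbitrary} closed $Z$; excision then reduces to the observation that $f|_W : W\to Z$ is an isomorphism. In other words, you have traced the real content of the paper's appeal to ``purity'' back to the localization step, which is exactly the step that survives for non-smooth $Z$. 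So: same underlying ingredients, but your presentation makes the dependence explicit, avoids the smoothness restriction that the paper's citation technically imposes, and trades the external reference to \cite[Theorem~5.13]{Teh:real} for a duality argument internal to this paper. Both are fine; yours is longer but more self-contained and marginally more careful on the Nisnevich point.
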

\begin{proof}
 It is homotopy invariant by \cite[Theorem 5.13]{Teh:real}. Nisnevich descent follows immediately from Theorem \ref{reducedpurity}. 
\end{proof}

\section{Birational invariants and examples}\label{BIE}

We use the following decomposition theorem for spaces of reduced cycles on blow-ups with smooth center in order to obtain a decomposition of the cokernels of the cycle map from reduced Lawson homology. Later we use this decomposition to exhibit spaces whose cycle map has nontrivial cokernel i.e. it is not surjective. Recall that $\mcal{R}_{-q}(X) = \mcal{R}_{0}(X\times\A^{-q})$ for $q\geq 0$.

\begin{theorem}\label{decompthrm}
Let $X$ be a smooth real projective variety and $Z\subset X$ a smooth closed irreducible subvariety of codimension $d>1$. Let $\pi:X^*\rightarrow X$ be the blow up of $X$ by the smooth center $Z$. Then, for any $0\leq q\leq dim(X)$ we have a homotopy equivalence of topological abelian groups
\begin{equation}\label{dec}
   \mcal{R}_q(X^*)\stackrel{h.e.}{\simeq} \mcal{R}_q(X)\oplus \mcal{R}_{q-d+1}(Z)\oplus \mcal{R}_{q-d+2}(Z)\oplus \cdots \oplus \mcal{R}_{q-1}(Z).
\end{equation}
Moreover this decomposition is compatible with $s$-maps.
\end{theorem}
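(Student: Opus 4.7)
The plan is to deduce the theorem from the corresponding complex blow-up formula for cycle spaces together with the exactness of the reduced-cycle construction. First I would establish (via Nisnevich-type descent applied to the blow-up square, combined with the projective bundle formula for the exceptional divisor $E = \P(N_{Z/X}) \to Z$) that for the complex cycle spaces there is a natural homotopy equivalence of topological abelian groups
\begin{equation*}
\mcal{Z}_q((X^*)_\C) \wkeq \mcal{Z}_q(X_\C) \oplus \bigoplus_{i=1}^{d-1} \mcal{Z}_{q-i}(Z_\C).
\end{equation*}
The splitting maps out of the right-hand side are assembled from $\pi^*$, the pushforward along $E \hookrightarrow X^*$, and the multiplication-by-Chern-class operators on the projective bundle $\P(N_{Z/X})$. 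Because $X$, $Z$, $X^*$, and $E$ together with the relevant bundle data are defined over $\R$, every map occurring in this decomposition is defined over $\R$ and therefore commutes with the action of $G=\Z/2$ by complex conjugation.

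Next I would apply the functor $M \mapsto M^G/M^{av}$ term by term. For $\Z/2$-equivariant $\Z/2$-modules an equivariant direct sum decomposition $M \iso A \oplus B$ yields $M^G \iso A^G \oplus B^G$ and $M^{av} \iso A^{av} \oplus B^{av}$ (both fixed points and the norm image preserve equivariant direct sums), hence $M^G/M^{av} \iso A^G/A^{av} \oplus B^G/B^{av}$. Applied termwise (at the level of singular chains, to control homotopy) to the equivariant complex decomposition above, this produces precisely the claimed formula $\mcal{R}_q(X^*) \wkeq \mcal{R}_q(X) \oplus \bigoplus_{i=1}^{d-1} \mcal{R}_{q-i}(Z)$.

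Compatibility with the $s$-map is then essentially formal: the $s$-map is induced on cycle spaces by a natural, $G$-equivariant operation (flat pullback along the projection $X\times\A^{1} \to X$), and each constituent map in the complex decomposition is built from natural geometric operations (pullback along $\pi$, Gysin maps for the closed embedding $E \hookrightarrow X^*$, the projective bundle splitting on $E$) which commute with such flat pullbacks; the equivariance verified above then transports this compatibility to the reduced quotients.

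The main obstacle is the equivariant complex decomposition itself. Sch\"ulting's paper \cite{S:reelle} establishes the analogous decomposition in a closely related context, and its proof rests on exactly the Nisnevich descent and projective bundle properties that we need. In the present setting these ingredients are already available for reduced morphic cohomology (Corollary \ref{boprops} and Theorem \ref{reducedpurity}) and hence, via Poincar\'e duality (Theorem \ref{pd}), for reduced Lawson homology, so an alternative direct route is to argue entirely at the reduced level: iterate the localization sequence for $E \subset X^*$ versus $Z \subset X$, identify $\widetilde{\mcal{R}}_q(E)$ with $\bigoplus_{i=0}^{d-1}\widetilde{\mcal{R}}_{q-i}(Z)$ via a reduced projective bundle formula (itself derived from the complex projective bundle formula by the same averaging-and-fixed-points argument), and then splice the resulting split short exact sequences into the stated decomposition.
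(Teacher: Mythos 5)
Your primary route has a genuine gap at the step where you ``apply the functor $M\mapsto M^G/M^{av}$ term by term.'' The functors $(-)^G$ and $(-)^{av}$ on $G$-topological abelian groups (equivalently, on simplicial abelian $G$-modules after applying $\sing_\bullet$) are \emph{not} homotopy-invariant: a $G$-equivariant map that is a non-equivariant weak equivalence need not induce a weak equivalence on fixed points. Concretely, the two-term complex $\Z/2\xrightarrow{1\mapsto 1+\sigma}\Z/2[G]$ (degrees $1,0$) is $G$-equivariantly quasi-isomorphic to $\Z/2$ in degree $0$ via the augmentation, yet its $G$-fixed subcomplex is $\Z/2\xrightarrow{\iso}\Z/2$, which is acyclic; this already breaks the termwise passage. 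Your observation that an honest equivariant direct-sum \emph{isomorphism} $M\iso A\oplus B$ induces $M^G/M^{av}\iso A^G/A^{av}\oplus B^G/B^{av}$ is correct, but the complex-level blow-up decomposition and projective bundle formula for cycle spaces are only homotopy equivalences obtained by splitting localization sequences, not actual isomorphisms of topological abelian groups, so the algebraic observation does not apply. Equivariance of the structure maps ($\pi^*$, Gysin, Chern operators) gives you equivariant comparison maps, but not equivariant homotopy inverses or homotopies, and with $\Z/2$-coefficients you cannot average a homotopy to make it equivariant.

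The paper avoids this entirely by arguing directly at the reduced level. It uses Corollary \ref{boprops} (the reduced theory $\widetilde{\mcal{R}}^{q}(-)$ is homotopy-invariant and satisfies Nisnevich descent) to obtain the Mayer--Vietoris splitting $\mcal{R}_k(X^*)\wkeq \mcal{R}_k(X)\oplus\ker(p_*)$ for the blow-up square, and then proves a projective bundle formula for $\mcal{R}_k(N_Z X)$ using Segre-class operations $s_l(N_Z X)\cap-$ defined at the reduced level (following \cite{Teh:real}). The key inputs that make the reduced (and fixed-point) cycle spaces behave correctly are separate theorems in \cite{HV:VT} and \cite{Teh:real}, cited via Theorem \ref{reducedpurity} and Corollary \ref{boprops}; they are not formal consequences of the complex case. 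Your ``alternative direct route'' at the end is much closer to what the paper actually does, but it still proposes to obtain the reduced projective bundle formula ``from the complex projective bundle formula by the same averaging-and-fixed-points argument,'' which re-introduces the same gap. To close the argument you should prove the reduced projective bundle formula intrinsically (via the reduced Segre operations and the reduced localization sequence, as in the paper), rather than by descending from the complex theory.
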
 
\begin{proof} 
We follow the ideas used to prove \cite[Theorem 2.5]{Voin:2} and the main theorem of \cite{S:reelle}. We work in $\mathcal{H}^{-1}AbTop$, the category of topological abelian groups with a CW-structure with inverted homotopy equivalences.

 Recall that $\pi^{-1}(Z)\rightarrow Z$ is the projective bundle $p:\mathbb{P}(N _ZX)\rightarrow Z$  of dimension $d-1$. The decomposition in the statement of the theorem is given as follows. The first component of the map  is $\pi_{*}$ (notice that $\pi _*\circ \pi^*=id$). The other maps are given via compositions 
$$
\phi _l: \mcal{R}_{q-d+1+l}(Z)\xrightarrow{p^*} \mcal{R}_{q+l}(X^*)\xrightarrow{-\cap c _1(O(1))^l} \mcal{R}_q(X^*),
$$ 
 where $p^*: \mcal{R}_q(Z)\rightarrow \mcal{R}_{q+d-1}(\mathbb{P}(N _ZX))\stackrel{i _*}{\rightarrow} \mcal{R}_{q+d-1}(X^*)$. 

Using the Mayer-Vietoris sequence (see Corollary \ref{boprops}), we have that 
$$
\mcal{R}_k(X^*)\stackrel{h.e.}{\rightarrow} \mcal{R}_k(X)\oplus Ker(p _*),
$$ 
with $p _*:\mcal{R} _k(N _Z(X))\rightarrow \mcal{R}_k(Z)$ and then identify in $\mathcal{H}^{-1}AbTop$
\begin{equation}\label{khe}
Ker(p _*)\stackrel{h.e.}{\simeq}\mcal{R}_{q-d+1}(Z)\oplus \mcal{R}_{q-d+2}(Z)\oplus\cdots\oplus \mcal{R}_{q-1}(Z).
\end{equation}
Using the properties of Segre classes $s _l(N _ZX)\cap -:\mcal{R}_k(Z)\rightarrow \mcal{R}_{k+d-1-l}(Z)$ i.e. $s _l(N _Z(X))\cap -=0$ if $l<0$ and $s _0(N _Z(X))\cap -=id$, one can prove  the projective bundle formula for the reduced cycle groups in the usual way, see \cite{Teh:real}. Observe that the projective bundle formula works also for negative indexes. We  obtain $$\mcal{R}_k(N _Z(X))=\oplus _{0\leq l\leq d-1}\mcal{R}_{k-d+1+l}(Z).$$
Now one can conclude the homotopy equivalence (\ref{khe}).

The $s$-maps are compatible with all of the maps involved in the decomposition \ref{dec} (see \cite{Teh:real}) and therefore the decomposition of the theorem is preserved by the $s$-maps.    

\end{proof}

The generalized cycle maps $cyc_{q,n}: RL_qH _n(X)\rightarrow H _n(X(\mathbb{R}),\mathbb{Z}/2)$,
are defined as a composite of $s$-maps together with the Dold-Thom isomorphism. Write
$$
T _{q,n}:=\coker(cyc_{q,n}:RL_qH _n(X)\rightarrow H _n(X(\mathbb{R}),\mathbb{Z}/2))
$$ 
and

$$
K _{q,n}(X)=\ker(cyc _{q,n}: RL _qH _n(X)\rightarrow H _n(X(\mathbb{R}),\mathbb{Z}/2)).
$$

Notice that $T _{q,n}=0 = K_{q,n}$, for $q\leq 0$.

\begin{corollary}\label{bma}
Let $\pi:X^{*}\to X$ and $Z$ be as in the above theorem. Then
$$
T _{q,n}(X^*)=T _{q,n}(X)\oplus T _{q-1,n-1}(Z)\oplus...\oplus T _{q-d+1,n-d+1}(Z),
$$ 
and
\begin{equation} 
K _{q,n}(X^*)=K _{q,n}(X)\oplus K _{q-1,n-1}(Z)\oplus...\oplus K _{q-d+1,n-d+1}(Z).
\end{equation}
\end{corollary}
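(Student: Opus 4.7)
The plan is to deduce both statements directly from Theorem \ref{decompthrm} by passing to homotopy groups, exploiting the asserted compatibility of the decomposition with the $s$-maps, and then using the elementary fact that kernels and cokernels of a direct sum of homomorphisms split as the corresponding direct sums.

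First, I would apply $\pi_{n-q}$ to the homotopy equivalence
$$\mcal{R}_q(X^*) \wkeq \mcal{R}_q(X) \oplus \bigoplus_{l=0}^{d-2} \mcal{R}_{q-d+1+l}(Z)$$
of Theorem \ref{decompthrm}. Since $\pi_i \mcal{R}_m(-) = RL_m H_{m+i}(-)$, setting $i = n-q$ gives
$$RL_q H_n(X^*) \iso RL_q H_n(X) \oplus \bigoplus_{l=0}^{d-2} RL_{q-d+1+l} H_{n-d+1+l}(Z).$$
Specializing the same theorem to $q = 0$ (so that $\mcal{R}_0(-) = \mcal{Z}_0((-)(\R))/2$) and invoking Dold-Thom yields the corresponding decomposition of the target of the cycle map,
$$H_n(X^*(\R);\Z/2) \iso H_n(X(\R);\Z/2) \oplus \bigoplus_{l=0}^{d-2} H_{n-d+1+l}(Z(\R);\Z/2).$$

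Next, the cycle map $cyc_{q,n}$ is by definition an iterated composition of $s$-maps followed by the Dold-Thom identification. Since Theorem \ref{decompthrm} asserts the decomposition (\ref{dec}) is preserved by each $s$-map, it is preserved by the full composition. Hence, with respect to the decompositions displayed above, the cycle map
$$cyc_{q,n}: RL_q H_n(X^*) \to H_n(X^*(\R);\Z/2)$$
is block diagonal with diagonal blocks $cyc_{q,n}$ on $X$ and $cyc_{q-d+1+l,\, n-d+1+l}$ on $Z$ for $l = 0,\dots,d-2$. Kernels and cokernels of a direct sum of maps of $\Z/2$-vector spaces split as the direct sum of the individual kernels and cokernels, so both claimed decompositions follow.

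The one point that requires care — and is really the only obstacle — is verifying that the identification of the $q=0$ case of (\ref{dec}) with the Borel-Moore decomposition is indeed the one induced from the $RL_qH_n$ decomposition by the iterated $s$-maps. This is precisely what the ``compatibility with $s$-maps'' clause of Theorem \ref{decompthrm} provides, since it says the components $\phi_l$ of the decomposition intertwine with $s$-maps on source and target; applying this $q$ times reduces to the $q=0$ situation, which is the Dold-Thom identification. Once this bookkeeping is in place, the corollary follows immediately.
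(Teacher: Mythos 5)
Your proof is correct and follows essentially the same route as the paper: apply homotopy groups to the decomposition of Theorem~\ref{decompthrm}, specialize to $q=0$ (via Dold--Thom) to decompose the target, invoke the $s$-map compatibility to see that $cyc_{q,n}$ is block-diagonal, and then take kernels and cokernels termwise. The paper's proof is terser but does exactly this, so your write-up is just a more explicit rendering of the same argument.
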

\begin{proof}
In the case $k=0$ the decomposition (\ref{dec}) gives  
\begin{equation*}
  \mcal{R}_0(X^*(\mathbb{R}))\stackrel{h.e.}{\simeq} \mcal{R}_0(X(\mathbb{R}))\oplus \mcal{R}_{-d+1}(Z(\mathbb{R}))\oplus \mcal{R}_{-d+2}(Z(\mathbb{R}))\oplus \cdots \oplus \mcal{R}_{-1}(Z(\mathbb{R})),
\end{equation*}
   therefore producing the decomposition of Borel-Moore homology
\begin{equation*}
   H _k(X^*(\mathbb{R}),\mathbb{Z}/2)=H _k(X(\mathbb{R}),\mathbb{Z}/2)\oplus H _{k-d-1}(Z(\mathbb{R}), \mathbb{Z}/2)\oplus \cdots \oplus H _{k-1}(Z(\mathbb{R}),\mathbb{Z}/2).
\end{equation*}
The $s$-maps respect the decomposition (\ref{dec}) and comparing the decomposition for $q=0$ and for $q>0$ yields the result.
\end{proof}
\begin{corollary} 
If $Z(\mathbb{R})=\emptyset$, then $T _{q,n}(X^*)=T _{q,n}(X)$ for any $0\leq q\leq n\leq dim(X)$.
\end{corollary}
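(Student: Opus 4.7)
The plan is to apply Corollary \ref{bma} directly and then observe that the ``extra'' summands vanish when $Z$ has no real points. By the previous corollary we have
$$
T_{q,n}(X^*) = T_{q,n}(X) \oplus T_{q-1,n-1}(Z) \oplus \cdots \oplus T_{q-d+1,n-d+1}(Z),
$$
so it suffices to show that $T_{q-i,n-i}(Z) = 0$ for each $1 \leq i \leq d-1$.

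For this, I would use the definition of $T_{q,n}$ as the cokernel of the cycle map $cyc_{q,n}\colon RL_qH_n(Z) \to H_n(Z(\mathbb{R});\mathbb{Z}/2)$. Since $Z(\mathbb{R}) = \emptyset$, the target $H_n(Z(\mathbb{R});\mathbb{Z}/2)$ is zero for every $n$, and hence its cokernel is zero, i.e.\ $T_{r,m}(Z) = 0$ for all $r,m$. Substituting back gives $T_{q,n}(X^*) = T_{q,n}(X)$. There is essentially no obstacle: the statement is an immediate specialization of Corollary \ref{bma}, and the only input is the triviality of Borel-Moore homology of the empty set. (Note that the sign convention $\mathcal{R}_{-q}(Z) = \mathcal{R}_0(Z \times \mathbb{A}^{-q})$ adopted earlier ensures that the formula of Corollary \ref{bma} makes sense also for negative indices, and since $(Z \times \mathbb{A}^{-q})(\mathbb{R}) = \emptyset$ as well, the vanishing argument applies uniformly.)
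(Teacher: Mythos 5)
Your proof is correct and is essentially the intended one: the corollary follows immediately from Corollary \ref{bma} together with the observation that $T_{r,m}(Z) = \coker\bigl(RL_rH_m(Z) \to H_m(Z(\mathbb{R});\mathbb{Z}/2)\bigr)$ vanishes for all $r,m$ once $Z(\mathbb{R})=\emptyset$, since the target group is zero. The paper states this corollary without proof precisely because this is the only step needed.
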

\begin{remark}
An analog of Corollary \ref{bma} for $T_{q,q}$ was originally proven by Sch\"ulting in \cite{S:reelle}. There separate arguments are needed to give a decomposition algebraically and a decomposition topologically. An advantage that our uniform proof has is that is entirely algebraic, the homology of real points being expressed in terms of homotopy of the group of algebraic cycles $\mcal{R}_0(X)$.
\end{remark}

\begin{remark} 
Using similar techniques one can prove a decomposition analogous to (\ref{dec}) for the  spaces  of real cycles defining dos Santos equivariant Lawson homology groups.
\end{remark}

\begin{corollary} \label{bic}
The groups
$T _{1,n}(X)$ and $K _{1,n}(X)$ are birational invariants for smooth projective real varieties. 
\end{corollary}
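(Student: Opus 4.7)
The plan is to reduce the invariance claim to the behavior under a single blow-up along a smooth center, via the weak factorization theorem, and then read off the result directly from Corollary \ref{bma}.

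First I would invoke the weak factorization theorem of Abramovich--Karu--Matsuki--W\l odarczyk (which applies over any field of characteristic zero, and in particular over $\R$): any birational map between smooth projective real varieties factors as a finite composition of blow-ups and blow-downs along smooth irreducible centers. Hence it suffices to show that if $\pi\colon X^{*}\to X$ is the blow-up of a smooth projective real variety $X$ along a smooth irreducible closed subvariety $Z$ of codimension $d\geq 2$, then $T_{1,n}(X^{*})\cong T_{1,n}(X)$ and $K_{1,n}(X^{*})\cong K_{1,n}(X)$, for every $n$.

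Next I would apply Corollary \ref{bma} with $q=1$. The corollary gives
\begin{equation*}
T_{1,n}(X^{*})\;=\;T_{1,n}(X)\oplus T_{0,n-1}(Z)\oplus T_{-1,n-2}(Z)\oplus \cdots \oplus T_{2-d,n-d+1}(Z),
\end{equation*}
and the analogous identity for $K_{1,n}$. Now I would use the remark made just before Corollary \ref{bma} that $T_{q,n}=0$ and $K_{q,n}=0$ whenever $q\leq 0$. Every summand on the right-hand side other than $T_{1,n}(X)$ has first index $\leq 0$, so all of them vanish. This yields $T_{1,n}(X^{*})\cong T_{1,n}(X)$, and by the same argument $K_{1,n}(X^{*})\cong K_{1,n}(X)$. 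Combined with the reduction above, this proves birational invariance.

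I do not anticipate a serious obstacle here: the two ingredients (Corollary \ref{bma} and the vanishing for $q\leq 0$) do all the work, and the only subtle point is the appeal to weak factorization over $\R$, which is standard but worth citing explicitly. One should also note that the decomposition in Corollary \ref{bma} is stated for $Z$ of codimension $d>1$, which is exactly the condition occurring in weak factorization (the center of a blow-up is always of codimension at least two for a nontrivial blow-up between smooth varieties), so no additional case analysis is required.
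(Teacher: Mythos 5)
Your proof is correct and follows exactly the same strategy as the paper: reduce to a single blow-up via weak factorization (the paper cites the same AKMW theorem) and then apply Corollary \ref{bma}. You have merely spelled out the step the paper leaves implicit, namely that setting $q=1$ in the decomposition produces only summands $T_{j,m}(Z)$ and $K_{j,m}(Z)$ with $j\leq 0$, which vanish by the observation made just before Corollary \ref{bma}.
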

\begin{proof}
By \cite[Theorem 0.3.1]{AKMW:birat} every birational map between smooth projective varieties factors as a composition of blow-ups and blow-downs with smooth centers. The result then follows from Corollary \ref{bma}.
\end{proof}

  We close the section with the following computation.
\begin{corollary} \label{comp}
Let $X$ be a rational smooth  projective surface i.e. $X\stackrel{birational}{\simeq}\mathbb{P}^2 _{\mathbb{R}}$. Then the cycle map
$$
cyc_{q,n}:RL_{q}H _n(X)\to H _n(X(\mathbb{R}),\mathbb{Z}/2)
$$ 
is an isomorphism for $q \leq n$. 
\end{corollary}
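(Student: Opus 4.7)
The plan is to reduce to the case $X = \mathbb{P}^2_{\mathbb{R}}$ via the weak factorization theorem \cite{AKMW:birat} (as in the proof of Corollary \ref{bic}) and then verify the cycle map directly on $\mathbb{P}^2_{\mathbb{R}}$. Since $X$ is a smooth projective real surface birational to $\mathbb{P}^2_{\mathbb{R}}$, the two varieties are connected by a sequence of blow-ups and blow-downs along smooth centers, each of which has codimension at least $2$ in a surface and is therefore a closed point $Z$.

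The next step is to upgrade the birational invariance of Corollary \ref{bic} from $q = 1$ to the full range $q \leq n$ in the surface setting. Corollary \ref{bma} applied with $d = 2$ to a blow-up $\pi\colon X^* \to X$ at a point $Z$ gives
\[
T_{q,n}(X^*) = T_{q,n}(X) \oplus T_{q-1,n-1}(Z), \qquad K_{q,n}(X^*) = K_{q,n}(X) \oplus K_{q-1,n-1}(Z).
\]
It therefore suffices to show $T_{q',n'}(Z) = K_{q',n'}(Z) = 0$ for any point $Z$ in the range $q' \leq n'$. When $Z(\mathbb{R}) = \emptyset$ this is automatic; when $Z(\mathbb{R})$ is a single real point, bidegree $(q',n') = (0,0)$ is trivially an isomorphism, while for $n' > 0$ the vanishing theorem from the preliminaries forces $RL_{q'}H_{n'}(Z) = 0$ (as $\dim Z = 0$) and $H_{n'}(Z(\mathbb{R});\mathbb{Z}/2) = 0$. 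Hence $T_{q,n}$ and $K_{q,n}$ are birational invariants of smooth projective real surfaces throughout the range $q \leq n$, and the corollary reduces to the case $X = \mathbb{P}^2_{\mathbb{R}}$.

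For $\mathbb{P}^2_{\mathbb{R}}$ I would compute $RL_q H_n(\mathbb{P}^2)$ by invoking the projective bundle formula for reduced cycle groups (established within the proof of Theorem \ref{decompthrm}) applied to $\mathbb{P}^2$ viewed as the projectivization of the trivial rank-$3$ bundle over a point. Combined with the computation $\mathcal{R}_j(\mathrm{pt}) = 0$ for $j > 0$, $\mathcal{R}_0(\mathrm{pt}) = \mathbb{Z}/2$, and $\mathcal{R}_j(\mathrm{pt}) = \mathcal{Z}_0(\mathbb{R}^{-j})/2$ for $j < 0$ (whose only non-trivial homotopy group is $\pi_{-j} = \mathbb{Z}/2$ by Dold--Thom for Borel--Moore homology), this yields $RL_q H_n(\mathbb{P}^2) \cong \mathbb{Z}/2$ precisely when $0 \leq q \leq n \leq 2$, matching $H_n(\mathbb{P}^2(\mathbb{R});\mathbb{Z}/2)$ group-by-group. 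The Borel--Haefliger cycle map $cyc_{n,n}$ is surjective because the mod-$2$ homology of $\mathbb{P}^2(\mathbb{R})$ is generated by classes of real linear subspaces; both source and target being $\mathbb{Z}/2$, it is an isomorphism. Since $cyc_{n,n}$ factors as a composition of $s$-maps $RL_nH_n \xrightarrow{s} RL_{n-1}H_n \xrightarrow{s} \cdots \xrightarrow{s} RL_0H_n$ between groups of order two, every intermediate $s$-map must be an isomorphism, and hence $cyc_{q,n}$ is an isomorphism for all $0 \leq q \leq n$. The main technical obstacle is setting up the projective bundle formula on $\mathbb{P}^2$ and handling the negative-index cycle spaces $\mathcal{R}_j(\mathrm{pt})$ with care; once this is granted, the rest follows formally.
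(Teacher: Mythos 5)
Your proof is correct, and it takes a somewhat different route from the paper's at the top end $q=2$. The paper's proof handles the three values of $q$ separately: $q=0$ is the tautology $\mcal{R}_0(X)=\mcal{Z}_0(X(\R))/2$; $q=1$ uses Corollary \ref{bic}, which is exactly the $q=1$ case of your birational-invariance step; and for $q=2$ the paper invokes Corollary \ref{dge} (the collapse of the coniveau spectral sequence) to see that $\pi_0\mcal{R}_2(X)=RL^0H^0(X)$ is a birational invariant, then compares with $\P^2_\R$. You instead observe that for a surface all blow-up centers are points $Z$ with $\dim Z=0$, so the correction terms $T_{q-1,n-1}(Z)$ and $K_{q-1,n-1}(Z)$ in Corollary \ref{bma} vanish (by the vanishing theorem for $n-1>0$, trivially for $(q-1,n-1)=(0,0)$, and because $\mcal{R}_*(\spec\C)=0$ when $Z(\R)=\emptyset$), giving birational invariance of $T_{q,n}$ and $K_{q,n}$ uniformly across $0\leq q\leq n\leq 2$ without appealing to the coniveau machinery. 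This buys a self-contained argument within Section \ref{BIE} and makes the final computation for $\P^2_\R$ the only place where actual groups need to be computed, which you do via the projective bundle formula. One small caveat worth flagging: the weak factorization theorem may pass through blow-ups along a smooth but reducible (e.g.\ a $\operatorname{Gal}(\C/\R)$-orbit of two complex) center, whereas Theorem \ref{decompthrm} is stated for irreducible $Z$; this is harmless, since the decomposition extends componentwise, but you should say so, as you are using the $Z=\spec\C$ case explicitly.
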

\begin{proof} 
We have $\mcal{R}_0(X)=\mcal{R}_0(X(\mathbb{R}))$. By Corollary \ref{bic} we have that $T_{1,n}(X)$ and $K_{1,n}(X)$ are birational invariants and we know that $T_{1,n}(\P^{2}_{\R}) = 0 = K_{1,n}(\P^{2}_{\R})$. By Theorem \ref{dge} the group $\pi_{0}\mcal{R} _2(X)$ is a birational invariant.
\end{proof}
\begin{remark}
 For $X$ as in the previous corollary we have that $RL_{q}H_{n}(X) = 0$ for $n<q$ and $n>2$ and  $RL_{0}H_{0}(X)= RL_{0}H_{2}(X) = RL_{1}H_{2}(X) = RL_{2}H_{2}(X) = \Z/2$.
\end{remark}
  
\section{Coniveau spectral sequences}
In this section we show that the coniveau spectral sequence for reduced morphic cohomology 
collapses. We make use of \cite{CTHK:BO} for the Bloch-Ogus theorem identifying the $E_{2}$-term of this spectral sequence. Let $X$ be a smooth, quasi-projective real variety and write $X^{(p)}$ for the set of points $x\in X$ whose closure has codimension $p$. Let $h^*$ be a cohomology theory with supports. Define $h^{i}_{x}(X) = \colim_{ U\subseteq X}h^{i}_{\overline{x}\cap U}(U)$ and $h^{i}(k(x)) = \colim_{ U \subseteq \overline{x}}h^{i}(U)$ (where in both colimits, $U$ ranges over nonempty opens). One may form the Gersten complex
$$
0\rightarrow \bigoplus_{x\in X^{(0)}} h^{n}_{x}(X)\rightarrow \bigoplus_{x\in X^{(1)}}h^{n+1}_{x}(X) \rightarrow\cdots \to \bigoplus_{x\in X^{(p)}} h^{n+p}_{x}(X) \to \cdots .
$$

This complex gives rise to the coniveau spectral sequence 
$$
E_{1}^{p,q} = \bigoplus_{x\in X^{(p)}}h^{p+q}_{x}(X) \Longrightarrow h^{p+q}(X)
$$
The associated filtration is  $N^{p}h^{n}(X) = \cup_{Z}Im(h^{n}_{Z}(X) \to h(X))$, where the union is over closed subvarieties $Z\subseteq X$ of codimension $p$.

\begin{proposition}\cite[Corollary 5.1.11, Theorem 8.5.1]{CTHK:BO}
Let $h^*$ be a cohomology theory with supports on $Sm/\R$ which satisfies Nisnevich excision and is homotopy invariant. Let $\mcal{H}^{q}$ be the Zariski sheafification of the presheaf $U\mapsto h^{q}(U)$. Then the Gersten complex $E_{1}^{\bullet,q}$ is a flasque resolution of $\mcal{H}^{q}$ and the coniveau spectral sequence has the form
$$
E^{p,q} _2=H^{p}_{Zar}(X;\mathcal{H}^q)\Longrightarrow H^{p+q}(X).
$$
For every $q$, the group $H^0(X,\mathcal{H}^q)$ is a birational invariant for smooth proper varieties. 
\end{proposition}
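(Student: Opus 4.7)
The proof will be a direct application of the machinery of Colliot-Th\'el\`ene--Hoobler--Kahn, so the plan is to verify that our hypotheses match their axiomatic framework and then quote the conclusions. First I would note that a cohomology theory with supports satisfying Nisnevich excision and homotopy invariance is exactly the input for their main theorem. The key technical step carried out in \cite{CTHK:BO} is an \emph{effaceability} lemma: given a smooth local scheme $X$ and a closed subscheme $Z$ of codimension $\geq 1$, any class in $h^{*}_{Z}(X)$ can be killed after passing to a suitable Nisnevich neighborhood and enlarging the support to codimension one higher. This is established using a Gabber-style geometric presentation that reduces to the relative $\mathbb{A}^{1}$-situation, where homotopy invariance of $h^{*}$ applies. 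Since our theory is assumed to satisfy both axioms, this step is available to us without additional work.

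Next I would deduce exactness of the Gersten complex of Zariski sheaves. The effaceability argument shows that at each point $x \in X$ the stalk of the Gersten complex is an exact sequence, so the complex is acyclic outside degree zero. To identify the degree-zero homology sheaf, one notes that for any open $U \subseteq X$ the kernel of $\bigoplus_{x \in U^{(0)}} h^{q}_{x}(U) \to \bigoplus_{x \in U^{(1)}} h^{q+1}_{x}(U)$ computes $h^{q}(U)$ by the same effaceability/purity argument, which sheafifies to $\mathcal{H}^{q}$. Each term $\bigoplus_{x \in X^{(p)}} h^{p+q}_{x}(X)$ is a direct sum of sheaves of the form $i_{x*}(\text{constant})$, which are flasque since their sections on an open $U$ depend only on whether $U$ meets $\overline{\{x\}}$. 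Hence one obtains a flasque resolution of $\mathcal{H}^{q}$, and computing the associated hypercohomology spectral sequence recovers the identification $E_{2}^{p,q} = H^{p}_{Zar}(X; \mathcal{H}^{q})$.

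Finally, for the birational invariance of $H^{0}(X; \mathcal{H}^{q})$, I would use the Gersten resolution to identify this group with the subgroup of $h^{q}(k(X))$ (the value at the generic point) consisting of classes whose residue at every codimension-one point of $X$ vanishes, i.e.\ the ``unramified'' part with respect to divisorial valuations centered on $X$. For smooth proper $X$, every divisorial valuation on $k(X)/\R$ admits a center on $X$ (this is the valuative criterion of properness combined with smoothness to access codimension-one points), so the subgroup of unramified elements inside $h^{q}(k(X))$ depends only on the function field $k(X)$ and not on the chosen smooth proper model. This gives birational invariance.

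The only real content to check is effaceability, but this is precisely the content of \cite{CTHK:BO} and follows formally from our two axioms; the remaining steps (flasqueness, identification of $\mathcal{H}^{q}$, unramified interpretation of $H^{0}$) are essentially homological algebra once the Gersten complex is known to be exact.
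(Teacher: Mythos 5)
This proposition is quoted directly from \cite{CTHK:BO} in the paper without proof, so there is no in-paper argument to compare against; what can be assessed is whether your sketch of the external proof is sound. The first two paragraphs are a faithful summary of the Colliot-Th\'el\`ene--Hoobler--Kahn machinery: effaceability via Gabber-style presentations, exactness of the Gersten complex at each point, flasqueness of the terms (sums of pushforwards of constant sheaves from points), and the resulting computation of the $E_2$-page as Zariski sheaf cohomology. That part is fine.

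The last paragraph, however, has a real gap. You assert that birational invariance of $H^0(X,\mathcal{H}^q)$ follows because ``every divisorial valuation on $k(X)/\R$ admits a center on $X$ (valuative criterion plus smoothness).'' The valuative criterion only guarantees a center of \emph{some} codimension, not codimension one; it does not by itself show that being unramified at the codimension-one points of $X$ forces unramifiedness at an arbitrary divisorial valuation of $k(X)$, which is what you need for the group to be model-independent. The missing step is precisely another application of the Gersten exactness you already established: if $v$ is a divisorial valuation centered at a point $x\in X$ of codimension $c>1$, then since the Gersten complex for $\mcal{O}_{X,x}$ is an acyclic resolution, a class $\alpha\in h^q(k(X))$ unramified at all codimension-one points of $X$ lifts to $H^0(\spec\mcal{O}_{X,x},\mathcal{H}^q)$; pulling this back to a model dominating $X$ on which $v$ has a codimension-one center shows $\alpha$ is unramified at $v$. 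Equivalently, one proves the invariance under a single proper birational morphism $X'\to X$ of smooth varieties and then reduces to this case by dominating any two smooth proper models by a common resolution. Your appeal to the valuative criterion alone does not substitute for this local argument.
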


\begin{corollary}\label{sm}
 Let $X$ be a smooth quasi-projective real variety. For each $k$ we have spectral sequences
\begin{equation*}
 E_{1}^{p,q}(k)  = \bigoplus\limits_{x\in X^{(p)}}RL^{k-p}H^{q}(k(x)) \Longrightarrow RL^{k}H^{p+q}(X).
\end{equation*}
The $E_{2}$-terms are  $E_{2}^{p,q}(k)  = H_{Zar}^{p}(X;\mcal{RL}^{k}\mcal{H}^{q})$
and each $H_{Zar}^{0}(X;\mcal{RL}^{k}\mcal{H}^{q})$ is a birational invariant for smooth projective real varieties. Moreover, the $s$-maps induce maps of spectral sequences $\{E_{r}^{p,q}(k)\} \to \{E_{r}^{p,q}(k+1) \}$.
\end{corollary}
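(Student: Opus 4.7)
The plan is to invoke the Bloch--Ogus machinery of \cite{CTHK:BO} (the proposition cited just above) with the cohomology theory with supports given, for each fixed $k$, by $X\mapsto RL^{k}H^{*}(X)$ and $Z\subseteq X \mapsto RL^{k}H^{*}(X)_{Z} = \pi_{*}\widetilde{\mcal{R}}^{k}(X)_{Z}$. The two hypotheses that need to be checked are homotopy invariance and Nisnevich excision. Homotopy invariance of the presheaf $\widetilde{\mcal{R}}^{k}(-)$ is exactly Corollary \ref{boprops}. Nisnevich excision for the theory with supports follows directly from Nisnevich descent of $\widetilde{\mcal{R}}^{k}(-)$ (also in Corollary \ref{boprops}) combined with the definition $\widetilde{\mcal{R}}^{k}(X)_{Z} = \hofib(\widetilde{\mcal{R}}^{k}(X)\to \widetilde{\mcal{R}}^{k}(X-Z))$: the Mayer--Vietoris triangle for an \'etale cover restricting to an iso over the complement of $Z$ collapses onto an equivalence on homotopy fibers.

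Once the hypotheses are in hand, the cited proposition immediately produces a spectral sequence
\begin{equation*}
E_{1}^{p,q}(k) = \bigoplus_{x\in X^{(p)}} RL^{k}H^{p+q}_{x}(X) \Longrightarrow RL^{k}H^{p+q}(X),
\end{equation*}
with $E_{2}$-page $H^{p}_{Zar}(X;\mcal{RL}^{k}\mcal{H}^{q})$, where $\mcal{RL}^{k}\mcal{H}^{q}$ is the Zariski sheafification of $U\mapsto RL^{k}H^{q}(U)$. The birational invariance of $H^{0}_{Zar}(X;\mcal{RL}^{k}\mcal{H}^{q})$ for smooth projective real varieties is also part of the general statement, so this step is essentially free.

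The remaining identification is of the $E_{1}$-summands. By definition $RL^{k}H^{p+q}_{x}(X) = \colim_{U} RL^{k}H^{p+q}(U)_{\overline{x}\cap U}$, where $U$ ranges over nonempty open neighborhoods of $x$ in $X$. Since we only need a cofinal system of opens, we may shrink $U$ so that $\overline{x}\cap U$ is smooth of codimension $p$ in $U$. Cohomological purity for reduced morphic cohomology (Theorem \ref{reducedpurity}) then gives $RL^{k}H^{p+q}(U)_{\overline{x}\cap U} \iso RL^{k-p}H^{q}(\overline{x}\cap U)$ naturally in $U$, and passing to the colimit yields $RL^{k-p}H^{q}(k(x))$ as claimed.

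Finally, the $s$-maps are natural transformations of presheaves $\widetilde{\mcal{R}}^{k}(-)\to \widetilde{\mcal{R}}^{k+1}(-)$ and are compatible with supports, hence induce a morphism of the entire coniveau exact couple and so a morphism of spectral sequences $\{E_{r}^{p,q}(k)\}\to \{E_{r}^{p,q}(k+1)\}$. The only delicate point in the argument is the verification of Nisnevich excision for supports, but as noted this reduces mechanically to Corollary \ref{boprops}; no essentially new input beyond the cited Bloch--Ogus framework and the reduced purity theorem is required.
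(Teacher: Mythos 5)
Your argument is correct and follows essentially the same route as the paper: invoke the Bloch--Ogus proposition using Corollary \ref{boprops} for homotopy invariance and Nisnevich excision, identify the $E_1$-terms via the purity theorem (Theorem \ref{reducedpurity}) applied on a cofinal system of opens where $\overline{x}\cap U$ is smooth, and observe that the $s$-maps, being natural transformations compatible with supports, induce morphisms of exact couples. The only difference is that you spell out the cofinality/shrinking step and the reduction of Nisnevich excision-with-supports to Nisnevich descent in more detail than the paper, but the ingredients and logic are identical.
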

\begin{proof}
By Corollary \ref{boprops} reduced morphic cohomology is homotopy invariant and satisfies Nisnevich excision. By Theorem \ref{reducedpurity}  we have for any $x\in X^{(p)}$ the isomorphism $RL^{k}H^{p+q}(X)_{x}=RL^{k-p}H^{q}(k(x))$. Thus the $E_{1}$-page of the coniveau spectral sequence can be rewritten in the displayed form. The $s$-maps are natural transformations and so induce maps of the exact couples defining the coniveau spectral sequence.
\end{proof}

The following result gives us the collapsing of the coniveau spectral sequence. It is  a consequence of the main vanishing theorem of \cite{HV:VT} and the local vanishing of equivariant morphic cohomology and morphic cohomology. 
\begin{theorem}\label{vanishing}
Let $R = \mcal{O}_{T,t_{1},\cdots, t_{n}}$ be the semi-local ring of a smooth, real variety $T$ at the closed points $t_{1},\ldots, t_{n}\in T$. Then 
$$
RL^{k}H^{q}(\spec R)= 0
$$ 
if $q\neq 0$ and any $k>0$. 
\end{theorem}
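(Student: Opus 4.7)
The plan is to feed local vanishing results for (equivariant) mod-$2$ morphic cohomology into the homotopy long exact sequences produced by the two fundamental short exact sequences of simplicial abelian groups recalled in Section 2. Setting $R_{\mathbb{C}}=R\otimes_{\mathbb{R}}\mathbb{C}$, itself a smooth semi-local $\mathbb{C}$-algebra at the preimages of $t_1,\ldots,t_n$, the two sequences read
\begin{equation*}
0\to\widetilde{\mcal{Z}}^k/2(R_{\mathbb{C}})^G\to\widetilde{\mcal{Z}}^k/2(R_{\mathbb{C}})\xrightarrow{N}\widetilde{\mcal{Z}}^k/2(R_{\mathbb{C}})^{av}\to 0
\end{equation*}
and
\begin{equation*}
0\to\widetilde{\mcal{Z}}^k/2(R_{\mathbb{C}})^{av}\to\widetilde{\mcal{Z}}^k/2(R_{\mathbb{C}})^G\to\widetilde{\mcal{R}}^k(\spec R)\to 0.
\end{equation*}
Since $\widetilde{\mcal{R}}^k(\spec R)$ appears as the cofiber in the second sequence, the vanishing of $\pi_i\widetilde{\mcal{R}}^k(\spec R)$ for $i<k$ (equivalently $RL^kH^q(\spec R)=0$ for $q>0$) is controlled by the corresponding homotopy of the outer $G$-flavored pieces.

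The first step is to identify $\pi_i\widetilde{\mcal{Z}}^k/2(R_{\mathbb{C}})=L^kH^{k-i}(R_{\mathbb{C}};\mathbb{Z}/2)$ with mod-$2$ morphic cohomology of the complex semi-local ring. By the Friedlander-Walker-Suslin-Voevodsky comparison this equals mod-$2$ motivic cohomology $H^{k-i}_M(R_{\mathbb{C}};\mathbb{Z}/2(k))$; the local vanishing of motivic cohomology in cohomological degrees strictly above the weight, together with the Beilinson-Lichtenbaum identification with mod-$2$ \'etale cohomology that vanishes over $\mathbb{C}$ by Suslin rigidity, then forces this to vanish for $0<k-i\le k$, i.e. for $0\le i<k$. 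The analogous local vanishing for $\pi_i\widetilde{\mcal{Z}}^k/2(R_{\mathbb{C}})^G$, which computes the dos Santos-type $G$-equivariant mod-$2$ morphic cohomology, would be supplied by the equivariant flavor of Poincar\'e duality appearing in the proof of Theorem \ref{pd} combined with the main vanishing theorem of \cite{HV:VT} applied to the equivariant reduced Lawson groups of $\spec R\times\mathbb{A}^k$.

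Plugging these two inputs into the first long exact sequence forces $\pi_i\widetilde{\mcal{Z}}^k/2(R_{\mathbb{C}})^{av}=0$ for $i<k$, and the second long exact sequence then yields $\pi_i\widetilde{\mcal{R}}^k(\spec R)=0$ for $i<k$, which is the vanishing in positive cohomological degrees. The case $q<0$ is automatic: via Poincar\'e duality, $RL^kH^q(\spec R)$ is identified with a reduced Lawson group that vanishes by the dimensional bound of \cite{HV:VT}. The principal obstacle is the equivariant local vanishing for $\widetilde{\mcal{Z}}^k/2(R_{\mathbb{C}})^G$, since standard Gersten-style results for equivariant morphic cohomology are not immediately available and have to be bootstrapped through the Poincar\'e duality of Theorem \ref{pd} together with the vanishing theorem of \cite{HV:VT}.
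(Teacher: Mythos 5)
Your proposal reproduces the skeleton of the paper's argument exactly: pass to the filtered colimit over opens containing the $t_i$, handle $q<0$ via the dimensional vanishing of \cite{HV:VT}, and for $q>0$ feed local vanishing results for $\widetilde{\mcal{Z}}^{k}/2(R_{\C})$ and $\widetilde{\mcal{Z}}^{k}/2(R_{\C})^{G}$ into the two homotopy fiber sequences. Your treatment of the underlying complex piece is fine: $\pi_{i}\widetilde{\mcal{Z}}^{k}/2(R_{\C})=0$ for $i\leq k-1$ does follow from the Gersten-style local vanishing of mod-$2$ motivic cohomology in degrees above the weight for a smooth semi-local $\C$-algebra (Beilinson--Lichtenbaum and Suslin rigidity are more than is needed here, but the conclusion is right; this is the content of \cite[Lemma 3.22]{HV:AHSS}).

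The gap is exactly the one you label ``the principal obstacle,'' and the route you sketch to close it does not work. You need $\pi_{i}\widetilde{\mcal{Z}}^{k}/2(R_{\C})^{G}=0$ for $i\leq k-1$, but neither the Poincar\'e duality of Theorem~\ref{pd} nor the main vanishing theorem of \cite{HV:VT} speaks about the $G$-fixed-point theory $\widetilde{\mcal{Z}}^{k}/2(-)^{G}$: both concern the \emph{reduced} theory $\widetilde{\mcal{R}}^{k}$, i.e.\ the quotient by the averaged subgroup, which is a genuinely different object, and the \cite{HV:VT} theorem is in any case a dimensional bound ($q+i>\dim X$), not a semi-local Gersten vanishing in low homotopy degrees. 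There is also a circularity risk: inferring the vanishing of $\pi_{i}\widetilde{\mcal{Z}}^{k}/2(R_{\C})^{G}$ from statements about $\widetilde{\mcal{R}}^{k}(\spec R)$ would presuppose the very vanishing $\pi_{i}\widetilde{\mcal{R}}^{k}(\spec R)=0$, $i<k$, that the theorem asserts. The paper closes this gap by quoting an independent local vanishing theorem for the $G$-equivariant (Real) morphic cohomology of semi-local rings, namely \cite[Theorem 7.3]{FHW:sst}; without importing a result of that kind, the first fiber sequence does not give $\pi_{i}\widetilde{\mcal{Z}}^{k}/2(R_{\C})^{av}=0$, and the second cannot then deliver the desired vanishing of $\pi_{i}\widetilde{\mcal{R}}^{k}(\spec R)$.
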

\begin{proof}
For convenience write $Y=\spec R$. Recall that by definition 
$$
RL^{k}H^{q}(Y)=\colim  RL^{k}H^{q}(U),
$$ 
where the colimit is over all open $U\subseteq T$ such that all $t_{i}\in U$.
Recall also that filtered colimits commute with homotopy groups and preserve exact sequences.

We need to see that $RL^{k}H^{k-s}(Y) = \pi_{s}\mcal{R}^{k}(Y) = 0$ for $s\neq k$. The main vanishing result in \cite[Theorem 6.10]{HV:VT} implies that $\pi_{s}\mcal{R}^{k}(Y) = 0$ for $s> k$.

Consider the homotopy fiber sequences of simplicial abelian groups
\begin{equation*}
 \xymatrix{
\widetilde{\mcal{Z}}^{k}/2(Y_{\C})^{G} \ar[r] & \widetilde{\mcal{Z}}^{k}/2(Y_{\C}) \ar[r]^{N} & \widetilde{\mcal{Z}}^{k}/2(Y_{\C})^{av}  
}
\end{equation*}
and
\begin{equation*}
 \xymatrix{
\widetilde{\mcal{Z}}^{k}/2(Y_{\C})^{av} \ar[r] & \widetilde{\mcal{Z}}^{k}/2(Y_{\C})^{G} \ar[r] & \widetilde{\mcal{R}}^{k}(Y)  .
}
\end{equation*}
Because $\pi_{s}\widetilde{\mcal{Z}}^{k}/2(Y_{\C})^{G} = 0 = \pi_{s}\widetilde{\mcal{Z}}^{k}/2(Y_{\C})$ if $s\leq k-1$ (\cite[Theorem 7.3]{FHW:sst} and \cite[Lemma 3.22]{HV:AHSS}) we see that $\pi_{s}\widetilde{\mcal{Z}}^{k}/2(Y_{\C})^{av} = 0$ if $s\leq k-1$. Using the second homotopy fiber sequence we conclude that $\pi_{s}\widetilde{\mcal{R}}^{k}(Y) = 0$ if $s\leq k-1$. 
\end{proof}

\begin{corollary}\label{dge}
Let $X$ be a smooth quasi-projective real variety. For any $k$, the
 coniveau spectral sequence for reduced morphic cohomology satisfies 
$$
E_{1}^{p,q}(k) = 0
$$ 
for $q\neq 0$. Consequently $E_{2}^{p,0}(k) = E_{\infty}^{p,0}(k)$ and so we have natural isomorphisms
\begin{equation*}
H^{p}_{Zar}(X;\mcal{RL}^{k}H^{0}) = RL^{k}H^{p}(X).
\end{equation*}
In particular $H^0(X,\mathcal{RL}^qH^0)=RL^qH^0(X)=\pi _q(\mathcal{R}^q(X))$ is a birational invariant for smooth projective real varieties.
\end{corollary}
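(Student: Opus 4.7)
The plan is to apply Theorem \ref{vanishing} inside the coniveau spectral sequence of Corollary \ref{sm} so as to concentrate its $E_1$-page on the $q=0$ line; the rest then follows formally.

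The main work is to show $E_{1}^{p,q}(k)=0$ for $q\neq 0$. By Corollary \ref{sm} this reduces to the vanishing of $RL^{k-p}H^{q}(k(x))$ for each $x\in X^{(p)}$. Unfolding the definition, $RL^{k-p}H^{q}(k(x))=\colim_{V}RL^{k-p}H^{q}(V)$ where $V$ ranges over nonempty open subsets of $\overline{x}$, which we may take smooth by generic smoothness. To kill this filtered colimit it suffices to show that every class $\alpha\in RL^{k-p}H^{q}(V)$ dies upon restriction to some smaller open. When $k-p>0$, Theorem \ref{vanishing} applied (with $n=1$) at any closed point $v\in V$ gives $RL^{k-p}H^{q}(\spec \mathcal{O}_{V,v})=0$; since $\spec\mathcal{O}_{V,v}$ is the cofiltered limit of its open neighborhoods and reduced morphic cohomology is continuous in this sense by construction, some such neighborhood $V'\ni v$ already kills $\alpha$. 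In the corner case $k-p=0$ with $q\neq 0$, Theorem \ref{vanishing} does not apply, but Poincar\'e duality (Theorem \ref{pd}) identifies
\[
RL^{0}H^{q}(V)\cong \pi_{-q}\widetilde{\mathcal{R}}_{\dim V}(V)=RL_{\dim V}H_{\dim V-q}(V),
\]
which vanishes trivially for $q>0$ (negative homotopy index) and by the vanishing theorem of \cite{HV:VT} for $q<0$ (as the Borel--Moore degree $\dim V-q$ then exceeds $\dim V$).

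Once the $E_{1}$-page is concentrated on the row $q=0$, every differential $d_{r}$ has either zero source or zero target, so $E_{2}^{p,0}(k)=E_{\infty}^{p,0}(k)$ and the niveau filtration on $RL^{k}H^{p}(X)$ has a single nontrivial graded piece. Combined with the $E_{2}$-identification in Corollary \ref{sm} this yields the natural isomorphism $H^{p}_{Zar}(X;\mathcal{RL}^{k}\mathcal{H}^{0})=RL^{k}H^{p}(X)$. Specialising to $p=0$ and renaming $k$ as $q$ gives $RL^{q}H^{0}(X)=H^{0}_{Zar}(X;\mathcal{RL}^{q}\mathcal{H}^{0})$, which is a birational invariant for smooth projective real varieties by the final assertion of Corollary \ref{sm}; the equality $RL^{q}H^{0}(X)=\pi_{q}\mathcal{R}^{q}(X)$ is merely the definition.

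The principal obstacle is the vanishing step, specifically bridging Theorem \ref{vanishing}'s statement about semi-local rings at closed points to the generic-point cohomology $RL^{k-p}H^{q}(k(x))$ via continuity, together with the separate handling of the edge case $k-p=0$ via Poincar\'e duality and \cite{HV:VT}.
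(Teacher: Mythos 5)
Your proof is correct and follows the same line as the paper, which leaves the proof of the corollary essentially implicit: Theorem \ref{vanishing} is the engine, and the coniveau $E_1$-page from Corollary \ref{sm} is the frame. What the paper does not spell out is exactly the step you supply, namely that Theorem \ref{vanishing} as stated concerns semi-local rings at \emph{closed} points, whereas the $E_1$-terms $RL^{k-p}H^{q}(k(x))=\colim_{V\subseteq\overline{x}}RL^{k-p}H^{q}(V)$ are stalks at the \emph{generic} point of $\overline{x}$. Your bridging argument — pick a class $\alpha$ on some smooth open $V$, choose a closed point $v\in V$, kill $\alpha$ at $\spec\mathcal{O}_{V,v}$ by Theorem \ref{vanishing}, and then by continuity of reduced morphic cohomology kill it on some open $V'\ni v$ contained in $V$ — is exactly what is needed to conclude that the filtered colimit vanishes, and it is the right argument. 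Your separate treatment of the edge case $k-p=0$ via Poincar\'e duality (Theorem \ref{pd}) together with the vanishing theorem of \cite{HV:VT} is also correct (the case $q>0$ gives a negative homotopy index, the case $q<0$ gives Borel--Moore degree above the dimension), and this case genuinely does fall outside the hypotheses of Theorem \ref{vanishing} (which requires $k>0$), so it deserved the separate handling you gave it. The degeneration of the spectral sequence, the identification of the abutment, and the appeal to Corollary \ref{sm} for birational invariance are all routine and stated correctly. The only very minor quibble: you could note that for $p>k$ (i.e.\ $k-p<0$) the $E_1$-terms vanish for dimension reasons via purity, so the corollary's claim covers all $p$.
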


\begin{remark}\label{shvan}
 The case $k= \dim X$ tells us that $\mcal{H}^{i}_{\R}= 0$ for any $i>0$ where $\mcal{H}^{i}_{\R}$ is the Zariski sheafification of the presheaf $U\mapsto H^{i}(U(\R);\Z/2)$. This also follows from \cite[Theorem 19.2]{Sch:ret}. 
\end{remark}

\begin{remark}
Corollary \ref{dge} gives us birational invariants $RL^qH^0(X)$ for $0\leq q\leq d = \dim(X)$.
If  $q= d$ then we have 
$$
RL^dH^0(X) = H^{0}(X(\R);\Z/2) = (\Z/2)^{\oplus s}
$$ 
and therefore $s=s(X) = \#(\textrm{connected components of}\,\,X(\R))$ is a birational invariant of an algebraic nature. This also follows from the main result of \cite{CTP:real} where they show that $H^0(X,\mathcal{H}^n _{et})=H^0(X(\mathbb{R}),\mathbb{Z}/2)$ for any $n\geq dim(X)+1$. Here $\mathcal{H} _{et}$ is the sheaf associated to the presheaf $U\rightarrow H^n _{et}(U, \mu^{\otimes n} _2)$.

At the other extreme if one takes $q=0$, 
$$
RL^0H^0(X)= (\Z/2)^{\oplus r}
$$  
where $r = r(X)= \#(\textrm{geometrically irreducible components of}\,\,X)$ and so $r$ is also a birational invariant. 

\end{remark}

\begin{remark} \label{cmor}
By Corollary \ref{dge} and Corollary \ref{sm} the $s$-maps
$$
RL^qH^n(X)\stackrel{s}{\rightarrow}RL^{q+1}H^n(X)
$$ 
are obtained as the map induced on Zariski sheaf cohomology by the sheafified $s$-maps 
$s:\mcal{RL}^q\mcal{H}^{0}\to\mcal{RL}^{q+1}\mcal{H}^{0}$.
In particular we see that the generalized cycle map 
$$
cyc_{q,n}:RL^{q}H^{n}(X) \to H^{n}(X(\R);\Z/2)
$$ 
is obtained from the sheafified cycle map $\mcal{RL}^{q}\mcal{H}^{0} \to \mcal{H}^{0}_{\R}$.
In the last section we show that this cycle map is naturally related to the Borel-Haeflinger cycle map \cite{BH:cycle}.
\end{remark}

We finish by observing that Poincare duality gives the collapsing of the niveau spectral sequence for reduced Lawson homology (of possibly singular varieties). 

\begin{proposition}\label{niveau} 
Let $X$ be a quasi-projective real variety. Write $X_{(p)}$ for the set of points $x\in X$ whose closure has dimension $p$. The niveau spectral sequence 
$$
E^1 _{p,q}(k)=\oplus _{x\in X_{(p)}}RL_{k}H _{p+q}(k(x))\Longrightarrow RL_{k}H _{p+q}(X)
$$ 
satisfies $E^{1}_{p,q}(k)=0$ for any $q\neq 0$ and therefore $E^{2}_{p,q}(k) = E^{\infty}_{p,q}(k)$.
\end{proposition}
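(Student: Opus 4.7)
The plan is to use Poincar\'e duality (Theorem \ref{pd}) to identify each $E^{1}_{p,q}(k)$-term of the niveau spectral sequence with a term appearing in the coniveau spectral sequence for reduced morphic cohomology, and then invoke the collapsing already established in Corollary \ref{dge}.

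Fix $x\in X_{(p)}$, so that $\overline{x}$ is an irreducible (possibly singular) subvariety of dimension $p$. By definition,
$$
RL_{k}H_{p+q}(k(x))=\colim_{U} RL_{k}H_{p+q}(U),
$$
where $U$ ranges over nonempty opens of $\overline{x}$. By generic smoothness the smooth opens $U\subseteq (\overline{x})^{sm}$ form a cofinal subsystem, so I may assume throughout that $U$ is smooth of dimension $p$.

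If $k>p$, then $\mcal{R}_{k}(U)=0$ identically and the $E^{1}$-term vanishes for trivial reasons. If $0\leq k\leq p$, the homotopy equivalence of Theorem \ref{pd} applied to the smooth $p$-dimensional $U$ gives a natural isomorphism
$$
RL_{k}H_{p+q}(U)\iso RL^{p-k}H^{-q}(U),
$$
compatible with restriction along open immersions. Passing to the filtered colimit over smooth opens of $\overline{x}$ then yields
$$
RL_{k}H_{p+q}(k(x))\iso RL^{p-k}H^{-q}(k(x)).
$$
Since $x$ is the generic point of the smooth variety $U$, Corollary \ref{dge} (with coniveau parameter $p-k\geq 0$) implies that the right-hand side vanishes whenever $-q\neq 0$. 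Summing over $x\in X_{(p)}$ we conclude $E^{1}_{p,q}(k)=0$ for $q\neq 0$, and hence $E^{2}_{p,q}(k)=E^{\infty}_{p,q}(k)$ as required.

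The only subtlety is the compatibility of Poincar\'e duality with the filtered colimit defining $k(x)$, which is routine since the homotopy-fiber sequences used in the proof of Theorem \ref{pd} and the formation of homotopy groups both commute with filtered colimits of simplicial abelian groups. The possible singularity of $X$ and of $\overline{x}$ is accommodated simply by the cofinal restriction to the smooth locus of $\overline{x}$, so that no smoothness hypothesis on $X$ itself is needed.
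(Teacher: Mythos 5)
Your argument follows the same route as the paper's: identify $RL_kH_{p+q}(k(x))$ with $RL^{p-k}H^{-q}(k(x))$ via Poincar\'e duality (Theorem \ref{pd}), then invoke the vanishing in Corollary \ref{dge}. You simply supply the details the paper elides (cofinality of smooth opens in $\overline{x}$, compatibility of duality with the filtered colimit, and the trivial case $k>p$), which is a welcome bit of rigor but not a different proof.
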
 
\begin{proof}
 The niveau spectral sequence is constructed as in \cite{BO:gersten}. Let $x\in X_{(p)}$. Using Corollary \ref{dge} we see that for any $q\neq 0$ 
$$
RL_{k}H _{p+q}(k(x)) = RL^{p-k}H^{-q}(k(x)) = 0 .
$$
\end{proof}

\section{Filtrations in homology} 
Let $X$ be a quasi-projective real variety of dimension $d$. The generalized cycle map  $\phi _{q,n}: RL_{q}H_{n}(X)\rightarrow H _n(X(\mathbb{R}),\mathbb{Z}/2)$ is the composition of $q$ iterations of the $s$-map together with the Dold-Thom isomorphism. Write
$$
RT_qH _n(X)=Im(\phi _{q,n}:RL_qH _n(X) \to H_{n}(X(\R);\Z/2).
$$
This gives us a decreasing filtration of the homology of the space of real points and is called the topological filtration.

Associated to the niveau spectral sequence is the niveau filtration 
$$
N_pRL_{k}H _n(X)=\sum_{\dim V \leq p} Im\left(RL_{k}H_n(V)\rightarrow RL_{k}H_n(X)\right).
$$   
Notice that in the complex case, if $Y$ is a complex variety of dimension $d$ then Weak Lefschetz theorem says that 
$$
N_{n}H_n(Y(\C);\Z)=N _{n+1}H_n(Y(\C);\Z)= \cdots = N_{d}H _n(Y(\C);\Z)).
$$
In particular, in the complex case there are only $n+1$ steps in the filtration, and another $d-n$ are equal to the homology. In the real case, we don't have this theorem and so apriori all one has is a filtration.
$$
N_{0}H _n(X(\R);\Z/2)\subseteq \cdots \subseteq N_{d}H _n(X(\R);\Z/2)=H _n(X(\R);\Z/2).
$$

Teh has formulated the following conjecture which is made in analogy with a conjecture of Friedlander-Mazur \cite[Conjecture p.71]{FM:filt} for complex varieties.
\begin{conjecture}[{\cite[Conjecture 7.9]{Teh:real}}]\label{Tq} 
Let $X$ be a smooth projective real variety. Then  $RT _qH _n(X)\subseteq N_{2n-q}H _n(X(\R);\Z/2)$ and moreover this containment is an equality $RT _qH _n(X) =N _{2n-q}H _n(X(\R);\Z/2)$ for any $0\leq q\leq n$.
\end{conjecture}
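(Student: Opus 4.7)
The conjecture decomposes into a containment $RT_qH_n(X) \subseteq N_{2n-q}H_n(X(\R);\Z/2)$ and its converse, which I would attack separately.

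For the forward containment, the plan is a direct support argument. A class $\alpha \in RL_qH_n(X)$ always factors through some closed subvariety of $X$; quantitatively, the collapse of the niveau spectral sequence for reduced Lawson homology (Proposition \ref{niveau}) forces the niveau filtration on $RL_qH_n(X)$ to be concentrated in degree $p=n$, so every class is in the image of $RL_qH_n(V) \to RL_qH_n(X)$ for some closed $V$ with $\dim V \leq n$. Pushing forward through $\phi_{q,n}$ by naturality places $\phi_{q,n}(\alpha)$ inside $N_nH_n(X(\R);\Z/2)$, and since $n \leq 2n-q$ for $q \leq n$ we get $\phi_{q,n}(\alpha) \in N_{2n-q}H_n(X(\R);\Z/2)$, as desired.

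For the reverse containment, one would need to lift every class in $N_{2n-q}H_n(X(\R);\Z/2)$ to reduced Lawson homology. Given $\beta$ in the image of $H_n(V(\R);\Z/2) \to H_n(X(\R);\Z/2)$ with $\dim V \leq 2n-q$, one would first replace $V$ by a resolution of singularities, then apply Poincar\'e duality on $V$ (Theorem \ref{pd}) together with iterated $s$-maps to convert the problem of producing a preimage of $\beta$ in some $RL_qH_n$ into the problem of representing a singular cohomology class on $V(\R)$ by an algebraic cocycle of the appropriate weight, followed by a transfer back to $X$.

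The main obstacle is precisely this last algebraic representability step: there is no a priori reason every mod-$2$ cohomology class on the real points of a smooth real variety should lie in the image of the cycle map. In fact, combining the forward containment with the collapse observation above shows that the conjecture is equivalent to the conjunction $N_{2n-q}H_n = N_nH_n$ together with surjectivity of the generalized cycle map onto $N_nH_n$. I therefore expect the conjecture to fail exactly when the Borel-Haefliger-style algebraic locus $H_n(X(\R);\Z/2)_{\textrm{alg}}$ is strictly smaller than the relevant niveau piece, and the natural way to produce counterexamples is the blow-up decomposition (Theorem \ref{decompthrm}): blowing up along a smooth center with empty real locus preserves the real homology of $X$ while enlarging the reduced cycle groups in a controlled way, forcing the cokernels $T_{q,n}$ to pick up the failure of surjectivity.
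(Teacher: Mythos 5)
Your reduction of the conjecture to the surjectivity question is correct and matches the paper's logic: Proposition \ref{niveau} forces the niveau filtration on $RL_kH_n$ (in particular on $H_n(X(\R);\Z/2)$, the case $k=0$) to jump all at once at $p=n$, so $N_{2n-q}H_n(X(\R);\Z/2) = H_n(X(\R);\Z/2)$ whenever $q\leq n$; the containment is then vacuous and the equality is exactly the assertion that $cyc_{q,n}$ is onto. Your support/naturality argument for the containment is a valid, if slightly longer, variant of this; the paper just observes directly that $N_{2n-q}H_n$ is the whole group.

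However, the counterexample construction you sketch is backwards. Blowing up $X$ along a smooth center $Z$ with $Z(\R)=\emptyset$ does not make the cokernel nontrivial; it makes the \emph{kernel} nontrivial. Corollary \ref{bma} gives $T_{q,n}(X^*) = T_{q,n}(X)\oplus \bigoplus_{i}T_{q-i,n-i}(Z)$ and $K_{q,n}(X^*) = K_{q,n}(X)\oplus\bigoplus_i K_{q-i,n-i}(Z)$. If $Z(\R)=\emptyset$ then all $H_m(Z(\R);\Z/2)$ vanish, so every $T_{i,m}(Z)=0$ and $T_{q,n}(X^*)=T_{q,n}(X)$ (this is stated explicitly right after Corollary \ref{bma}); you only gain a kernel from the nontrivial group $RL_1H_1(Z)=\Z/2$ mapping to zero, which is Proposition \ref{Kenex}. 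To break surjectivity you instead need a smooth irreducible center $Z$ whose real locus has \emph{more than one} connected component: then $RL_1H_1(Z)=\pi_0\mcal{R}_1(Z)=\Z/2$ (irreducibility) while $H_1(Z(\R);\Z/2)=(\Z/2)^s$ with $s\geq 2$, so $T_{1,1}(Z)=(\Z/2)^{s-1}\neq 0$, and Corollary \ref{bma} propagates this to $T_{q,q}(X^*)\neq 0$. This is Example \ref{example} (an elliptic curve in $\P^3_\R$ with two real ovals) and Proposition \ref{genex}. So the structure of your argument is right, but the hypothesis on $Z(\R)$ that produces the needed cokernel is the opposite of the one you proposed.
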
   
   
From Proposition \ref{niveau} we have the following equality: 
$$
E^{\infty}_{n-q,q}(k)=N _{n-q}RL_{k}H_{n}(X)/N _{n-q-1}RL_{k}H_{n}(X)=0
$$ 
for any $q\neq 0$. This means that for any $k$ we have 

$$
RL_{k}H_{n}(X) = N_dRL_{k}H_{n}(X) = \cdots = N_{n+1}RL_{k}H_{n}(X) = N_{n}RL_{k}H_{n}(X),
$$
and 
$$
0 = N_{-1}RL_{k}H_{n}(X) = N_{0}RL_{k}H_{n}(X) =\cdots = N_{n-1}RL_{k}H_{n}(X).
$$

The first row of equalities contains the groups that appear in Conjecture \ref{Tq}. 
Consequently the first part of the conjecture is obviously true because by the above we have that $N_{j}H_{n}(X(\R);\Z/2) = H_{n}(X(\R);\Z/2)$ for all $j\geq n$.

The second part of the conjecture is false because the $s$-maps are not always be surjective. Using the material from Section \ref{BIE} we give an explicit example of this failure. Recall that we write $T_{q,n}(X) = \coker(cyc_{q,n}:RL_{q}H_{n}(X) \to H_{n}(X(\R);\Z/2))$.

\begin{example}\label{example}
Let $Z\subseteq \P^{3} _\R$ be the smooth irreducible elliptic curve  given by the equation $t^2x+ty^2-x^3=0$, $z=0$. Then $Z(\R)$ is well known to consist of 2 connected components (see for example \cite[Example 3.1.2]{BCR:rag}). Let $X^{*}$ be the blow-up of $\P^{3}$ along $Z$. Then 
$T_{2,2}(X^{*}) = T_{2,2}(\P^{3}_{\R})\oplus T_{1,1}(Z)$ by Corollary \ref{bma}. Because $RL_{1}H_{1}(Z) = \pi_{0}\mcal{R}_{1}(Z) = \Z/2$ and $Z(\R)$ has two components we conclude that $T_{2,2}(X^{*}) = \Z/2$.  
\end{example}

More generally we have the following.
 
\begin{proposition} \label{genex}
For each $N\geq 3$, there is a smooth projective real variety $X$ of dimension $N$ (which is topologically connected) such that $T _{q,q}(X)\neq 0$ for all $2\leq q \leq N-1$.
 \end{proposition}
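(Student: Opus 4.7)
The plan is to generalize Example \ref{example} by blowing up $\P^N_\R$ along the very same elliptic curve used there. First I would fix the smooth irreducible plane cubic $Z\subset \P^3_\R$ defined by $t^2x+ty^2-x^3=0$, $z=0$; recall that $Z(\R)$ has two connected components and that the computation in Example \ref{example} gives $T_{1,1}(Z)=\Z/2$. Next, I would choose any linear embedding $\P^3\hookrightarrow \P^N$, which realizes $Z$ as a smooth closed irreducible subvariety of $\P^N_\R$ of codimension $d=N-1\geq 2$, and take $X$ to be the blow-up of $\P^N_\R$ along $Z$; this $X$ is smooth projective of dimension $N$.

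I would then apply Corollary \ref{bma} to this blow-up to obtain, for every $q$, the decomposition
\[
T_{q,q}(X) = T_{q,q}(\P^N_\R) \oplus T_{q-1,q-1}(Z) \oplus \cdots \oplus T_{q-d+1,q-d+1}(Z).
\]
Two elementary observations then finish the computation. First, $T_{q,q}(\P^N_\R)=0$ for every $q$, because $H_*(\P^N(\R);\Z/2)$ is generated by the classes of real linear subspaces, which are algebraic and so lie in the image of $cyc_{q,q}$. Second, for the curve $Z$ one has $T_{i,i}(Z)=0$ whenever $i\neq 1$: the cases $i\leq 0$ vanish by convention, while for $i\geq 2$ both $RL_iH_i(Z)$ and $H_i(Z(\R);\Z/2)$ vanish since $\dim Z=1$. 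Hence, for each fixed $q\in\{2,\dots,N-1\}$, the single index $j=q-1$ lies in the admissible range $1\leq j\leq d-1=N-2$ and contributes a summand $T_{1,1}(Z)=\Z/2$, while all other summands vanish. This gives $T_{q,q}(X)=\Z/2$ for every such $q$.

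It remains to confirm that $X(\R)$ is connected. The blow-up map $\pi\colon X\to \P^N_\R$ is proper, and on real points the fibers over $\P^N(\R)\setminus Z(\R)$ are single points while those over $Z(\R)$ are real projective spaces $\P^{d-1}(\R)$; thus $X(\R)\to \P^N(\R)$ is a surjection onto a connected space with connected fibers, and $X(\R)$ is connected.

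I do not expect any serious obstacle. The proposition essentially unwinds the decomposition of Corollary \ref{bma} once one has at hand a single curve with $T_{1,1}\neq 0$, and Example \ref{example} already supplies such a curve. The only small verifications are that the composite $Z\hookrightarrow \P^3\hookrightarrow \P^N$ is a smooth closed embedding of codimension $N-1\geq 2$, that $T_{q,q}(\P^N_\R)=0$, and the connectivity of $X(\R)$, each of which is immediate.
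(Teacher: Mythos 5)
Your proof is correct and follows the same strategy as the paper: blow up $\P^N_\R$ along a smooth irreducible curve whose real locus has at least two components and apply Corollary~\ref{bma}, noting $T_{q,q}(\P^N_\R)=0$ and $T_{i,i}(Z)=0$ for $i\neq 1$. You merely specialize to the particular elliptic curve of Example~\ref{example} (the paper allows any such curve, giving $(\Z/2)^{s-1}$) and add the routine but welcome check that $X(\R)$ is connected.
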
   
 \begin{proof} 
Let $Z\subseteq \P^{N}$ be a smooth irreducible real curve such that $Z(\mathbb{R})$ has at least 2 connected components. Let $s$ denote the number of connected components of $Z(\R)$. Since $Z$ is irreducible we have that $\mcal{R}_1(Z)=\mathbb{Z}/2$. Therefore $T_{1,1}(Z)=(\mathbb{Z}/2)^{s-1}$. Since $Z$ is a curve $T_{i,n}(Z) = 0$ for all other values of $i$ and $n$. We take $X\rightarrow \P^{N}$ to be the blow up of $\P^{N}$ along $Z$. Then $T_{q,q}(X) = T _{1,1}(Z)=\Z/2^{s-1}$ by Corollary \ref{bma} because $2\leq q \leq N-1 = \codim(Z)$. 
\end{proof}

We also have a similar result for the kernel.
\begin{proposition} \label{Kenex}
For each $N\geq 3$, there is a smooth projective real variety $X$, birational to $\P^{N}$, such that $K _{q,q}(X)\neq 0$ for all $2\leq q \leq N-1$. 
 \end{proposition}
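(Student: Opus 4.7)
The plan mirrors the proof of Proposition \ref{genex} with one key modification: instead of choosing a curve whose real locus has many components, I would take a smooth irreducible projective real curve $Z\subset \P^N_\R$ with $Z(\R) = \emptyset$ — for instance, the smooth conic $\{x^2+y^2+z^2=0\}$ linearly embedded in $\P^N_\R$. Define $X$ to be the blow-up of $\P^N_\R$ along $Z$; this is smooth, projective, birational to $\P^N_\R$, and the center has codimension $d = N-1 > 1$, so Corollary \ref{bma} applies.

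The first step is to verify that $K_{1,1}(Z) = \Z/2$. Since $Z$ is an irreducible curve, every effective $1$-cycle on $Z_\C$ is a non-negative integer multiple of the fundamental class $[Z_\C]$, and this class is $G$-fixed. Consequently $\mcal{Z}_1(Z_\C)^G = \Z$ and $\mcal{Z}_1(Z_\C)^{av} = 2\Z$, giving $\mcal{R}_1(Z) = \Z/2$ as a discrete group, so that $RL_1H_1(Z) = \pi_0 \mcal{R}_1(Z) = \Z/2$. On the other hand $H_1(Z(\R);\Z/2) = 0$ since $Z(\R) = \emptyset$, so $cyc_{1,1}$ is necessarily the zero map and $K_{1,1}(Z) = \Z/2$.

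Feeding this into Corollary \ref{bma} with $d = N-1$ yields
$$K_{q,q}(X) = K_{q,q}(\P^N_\R) \oplus \bigoplus_{j=1}^{N-2} K_{q-j,q-j}(Z),$$
so in particular $K_{q,q}(X)$ contains $\bigoplus_{j=1}^{N-2} K_{q-j,q-j}(Z)$ as a direct summand. Since $\dim Z = 1$, the vanishing theorem of \cite{HV:VT} forces $RL_iH_i(Z) = 0$ for $i\geq 2$, so among the summands $K_{i,i}(Z)$ only $K_{1,1}(Z) = \Z/2$ is nonzero. The summand $K_{1,1}(Z)$ appears in the decomposition precisely when $q - j = 1$ for some $1\leq j\leq N-2$, i.e.\ exactly when $2 \leq q \leq N-1$, forcing $K_{q,q}(X) \neq 0$ in this range.

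The only genuine computation beyond the formal decomposition of Corollary \ref{bma} is the identification $\mcal{R}_1(Z) = \Z/2$ for an irreducible curve with empty real locus; everything else is mechanical. There is no real obstacle once a smooth irreducible projective real curve without real points has been chosen, and the existence of such a curve in $\P^N_\R$ (for any $N\geq 2$) is elementary.
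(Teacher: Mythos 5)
Your proof is correct and follows essentially the same route as the paper's: choose a smooth geometrically irreducible curve $Z\subset \P^N_\R$ with $Z(\R)=\emptyset$, blow up $\P^N_\R$ along it, and feed $K_{1,1}(Z)=\Z/2$ into Corollary \ref{bma}. You supply more detail than the paper — a concrete choice of $Z$ (the empty conic) and the explicit computation $\mcal{Z}_1(Z_\C)^G/\mcal{Z}_1(Z_\C)^{av}=\Z/2\Z$, which hinges on $Z$ being geometrically irreducible so that $Z_\C$ stays irreducible — but the strategy and the key inputs are identical.
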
   
 \begin{proof} 
Let $Z\subseteq \P^{N}$ be a smooth irreducible real curve such that $Z(\mathbb{R})=\emptyset$. Then $K_{1,1}(Z)=\mathbb{Z}/2$ and  $K_{i,n}(Z) = 0$ for all other values of $i$ and $n$. Take $X\rightarrow \P^{N}$ to be the blow up of $\P^{N}$ along $Z$. We have $K_{q,q}(X) = K _{1,1}(Z)=\Z/2$ by Corollary \ref{bma}. 
\end{proof}
As an interesting particular case we have the following which is different than the complex analog.
\begin{corollary} 
There exists  a smooth real variety $X\stackrel{birational}\sim \mathbb{P}^N _\mathbb{R}$ such that the cycle map on divisors $RL^1H^1(X)\rightarrow H^1(X(\R),\Z/2)$ is not injective.
\end{corollary}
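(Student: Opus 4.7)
The plan is to deduce this directly from Proposition \ref{Kenex} by transferring the nonvanishing of a kernel on reduced Lawson homology to a nonvanishing on divisorial reduced morphic cohomology via the Poincar\'e duality of Theorem \ref{pd}.

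First I would fix $N\geq 3$ and apply Proposition \ref{Kenex} to obtain a smooth projective real variety $X$ of dimension $N$, birational to $\mathbb{P}^N_{\R}$, with $K_{N-1,N-1}(X)\neq 0$. Concretely this is the blow-up of $\P^N$ along a smooth irreducible real curve $Z$ with $Z(\R)=\emptyset$, and the nonzero class in $K_{N-1,N-1}(X)$ comes from the generator of $K_{1,1}(Z)=\mcal{R}_1(Z)=\Z/2$ under the decomposition of Corollary \ref{bma}. In particular the cycle map $cyc_{N-1,N-1}\colon RL_{N-1}H_{N-1}(X)\to H_{N-1}(X(\R);\Z/2)$ is not injective.

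Next I would invoke Poincar\'e duality (Theorem \ref{pd}) in the case $d=N$, $q=1$, $i=0$, which yields a natural isomorphism $RL^1H^1(X)\iso RL_{N-1}H_{N-1}(X)$ induced by the homotopy equivalence $\widetilde{\mcal{R}}^1(X)\wkeq \widetilde{\mcal{R}}_N(X\times \A^1)$ combined with the homotopy invariance $\mcal{R}_{N-1}(X)\wkeq\mcal{R}_N(X\times\A^1)$. Similarly on the topological side one has the classical duality $H^1(X(\R);\Z/2)\iso H_{N-1}(X(\R);\Z/2)$ (here $X(\R)$ is a smooth compact manifold of dimension $N$). The key point is then that the algebraic duality is compatible with the topological one, in the sense that the square
\begin{equation*}
\xymatrix{
RL^1H^1(X)\ar[r]^{cyc_{1,1}}\ar[d]^{\iso} & H^1(X(\R);\Z/2)\ar[d]^{\iso}\\
RL_{N-1}H_{N-1}(X)\ar[r]^{cyc_{N-1,N-1}} & H_{N-1}(X(\R);\Z/2)
}
\end{equation*}
commutes. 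This follows because both cycle maps are obtained as the inclusion of algebraic (co)cycles into topological (co)cycles after taking real points, and the Poincar\'e duality map is induced by the same inclusion of cycle spaces $\widetilde{\mcal{R}}^1(X)\hookrightarrow \widetilde{\mcal{R}}_N(X\times\A^1)$, compatibly with the Dold--Thom identification $\pi_0\mcal{R}_0(Y) = H_0(Y(\R);\Z/2)$.

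Combining the two steps, the kernel of $cyc_{1,1}$ is identified with the kernel of $cyc_{N-1,N-1}$, which is $K_{N-1,N-1}(X)\neq 0$, so $cyc_{1,1}$ fails to be injective. The main technical point is the commutativity of the comparison square above; although this is essentially immediate from the construction of Poincar\'e duality given in Theorem \ref{pd} and of the cycle maps as inclusions, it is the only step that requires any bookkeeping beyond a direct citation of Proposition \ref{Kenex} and Theorem \ref{pd}.
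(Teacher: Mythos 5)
Your proof is correct and follows the approach the paper intends: deduce the statement from Proposition \ref{Kenex} (the blow-up of $\P^N$ along a smooth irreducible curve $Z$ with $Z(\R)=\emptyset$) and transport the non-injectivity of $cyc_{N-1,N-1}$ to $cyc_{1,1}$ on divisors via the Poincar\'e duality of Theorem \ref{pd}. The paper states the corollary with no separate proof immediately after Proposition \ref{Kenex}, so the duality bookkeeping you spell out (compatibility of the cycle maps with the duality isomorphisms, via the $s$-maps and the inclusion $\widetilde{\mcal{R}}^{1}(X)\hookrightarrow\widetilde{\mcal{R}}_{N}(X\times\A^{1})$) is exactly the step left implicit there.
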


\begin{remark}\label{thin}
A $k$-cycle is said to be thin if it is a sum of closed subvarieties $Z\subseteq X$ with $\dim Z(\R) < k$. The kernel of the Borel-Haeflinger cycle map consists entirely of thin cycles by \cite{IS:real} and the composite $CH _q(X)\to RL _qH _q(X)\rightarrow H _q(X(\R);\Z/2)$ agrees with the Borel-Haeflinger cycle map by Theorem \ref{bh}. This means that the proposition above gives examples of nonzero classes which are represented by thin cycles in $RL _qH _q(X)$.
\end{remark}

\section{Cycle Maps} \label{cyclemaps}
Let $X$ be a smooth quasi-projective real variety. We discuss two natural cycle maps from motivic cohomology to the singular cohomology of $X(\R)$. Based on this, we show that Borel-Haeflinger map \cite{BH:cycle} factors through the reduced Lawson homology cycle map. We end the section with a discussion of the maps involved in the Suslin conjecture  from the view of the methods in this section.

 Recall that $G=\Z/2$. If $M$ is a $G$-space we write $H^{i}_{G}(M;\Z/2)$ for the Borel cohomology with $\Z/2$-coefficients. The reduced morphic cohomology of $X$ comes equipped with a natural generalized cycle map to the singular cohomology of real points. Composing this with the canonical map from real morphic cohomology and its isomorphism with motivic cohomology (with $\Z/2$-coefficients) gives us
the cycle map
\begin{equation}\label{themap}
H^{p}_{\mcal{M}}(X;\Z/2(q))  \to RL^{q}H^{p-q}(X) \xrightarrow{cyc_{}}H^{p-q}(X(\R);\Z/2).
\end{equation}
On the other hand the real morphic cohomology maps naturally to the Borel cohomology of the space of complex points. In turn there is a map $H^{n}_{G}(X(\C);\Z/2) \to \oplus H^{n-i}(X(\R);\Z/2)$ obtained by restricting to real points together with the decomposition
$$
H^n _G(X(\mathbb{R}),\mathbb{Z}/2)= H^{n}(X(\R)\times \R P^{\infty};\Z/2) = \bigoplus _{0\leq i\leq n}H^i(X(\mathbb{R}),\mathbb{Z}/2).
$$
Composing with the appropriate projection gives us 
\begin{equation}\label{amap}
H^{p}_{\mcal{M}}(X;\Z/2(q))  \to H^{p}_{G}(X(\C);\Z/2) \xrightarrow{}H^{p-q}(X(\R);\Z/2).
\end{equation}
We show that the cycle maps (\ref{themap}) and (\ref{amap}) agree with each other. Basically these agree because they can be seen as induced by maps of presheaves of cochain complexes and so Theorem \ref{MT} applies. 

Write $(Top)_{an}$ for the category of topological spaces homeomorphic to a finite dimensional CW-complex given the usual topology and $\phi:(Top)_{an}\to (Sm/\R)_{Zar}$ for the map of sites induced by $X\mapsto X(\R)$.
\begin{theorem}\label{thm:cycmaps}
 Let $X$ be a smooth quasi-projective real variety. The cycle maps given by
 (\ref{themap}) and (\ref{amap}) agree. Moreover the intermediate maps in (\ref{amap}) can be chosen so that the following diagram commutes
  $$
\xymatrix{
       H^p _{\mcal{M}}(X,\Z/2(q))\ar[r]^{}\ar[d] & H^p _{et}(X,\mu^{\otimes q} _2) \ar[r]^{\iso}\ar[d] & H^p _G(X(\C),\Z/2)\ar[d]\\     
        RL^pH^{p-q}(X)\ar[r]^{cyc} & H^{p-q}(X(\R),\Z/2)  & \ar[l] H^p _G(X(\R),\Z/2).\\
}
$$
for any $p,q\geq 0$.
\end{theorem}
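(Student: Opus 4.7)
The plan is to realize both cycle maps (\ref{themap}) and (\ref{amap}) as morphisms in the derived category of presheaves of $\Z/2$-chain complexes on $(Sm/\R)_{Zar}$ from a fixed model for the mod-$2$ motivic complex $\Z/2(q)_{\mcal{M}}$ to a shift of $R\phi_*\Z/2$, where $\phi:(Top)_{an}\to (Sm/\R)_{Zar}$ is the map of sites in the statement. Since $H^{p-q}(X(\R);\Z/2)=\mathbb{H}^{p-q}_{Zar}(X;R\phi_*\Z/2)$, both maps have the same source and target in this derived category, and Theorem \ref{MT} will then identify them. Once the maps agree at the derived-presheaf level, the commutativity of the displayed diagram will be read directly off the constructions.

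To lift (\ref{themap}), I would compose the natural map from the motivic complex to the complex representing reduced morphic cohomology (obtained from the isomorphism of mod-$2$ motivic cohomology with real morphic cohomology together with the quotient $\widetilde{\mcal{Z}}^q/2(-_\C)^G\to \widetilde{\mcal{R}}^q(-)$) with the sheafified cycle map $\mcal{RL}^q\mcal{H}^0\to \mcal{H}^0_\R$ from Remark \ref{cmor}, which by Corollary \ref{dge} computes (\ref{themap}) upon taking Zariski hypercohomology. To lift (\ref{amap}), I would use the Beilinson--Lichtenbaum theorem (known with $\Z/2$-coefficients) to identify the motivic complex with the appropriate truncation of $R\pi_*\mu_2^{\otimes q}$ along the change of sites $\pi:(Sm/\R)_{et}\to(Sm/\R)_{Zar}$, the standard comparison isomorphism $H^*_{et}(X;\mu_2^{\otimes q})\iso H^*_G(X(\C);\Z/2)$, and finally the composite of restriction $H^*_G(X(\C);\Z/2)\to H^*_G(X(\R);\Z/2)$ with the projection onto the summand $H^{p-q}(X(\R);\Z/2)$ of the decomposition $H^p_G(X(\R);\Z/2)=\bigoplus_i H^i(X(\R);\Z/2)$. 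Each step can be arranged as a zig-zag of morphisms of presheaves of complexes.

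With both cycle maps realized as morphisms of Zariski presheaves of complexes, Theorem \ref{MT} forces them to coincide, which is the first assertion. For the displayed diagram, the left square encodes the lift of (\ref{themap}) by construction; the composite through the top row and the rightmost vertical is the lift of (\ref{amap}), which by the first assertion equals the composite through the left vertical and the bottom horizontal; and the bottom-right horizontal is the summand projection built into the decomposition of $H^p_G(X(\R);\Z/2)$. Chasing the diagram then shows that all squares commute as stated.

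The main obstacle will be producing genuine presheaf-level (derived-category-of-presheaves-level) representatives for (\ref{amap}), since étale and Borel cohomology are most naturally defined only as hypercohomology groups; I would rely on Friedlander's rigid simplicial model for étale cohomology and a functorial Borel construction, together with a compatible model for $R\phi_*\Z/2$. A second delicate point is verifying the hypothesis of Theorem \ref{MT}, which should reduce the equality of the two morphisms to checking on generic stalks, where both maps are determined by the canonical Galois cycle class of a field and therefore agree by rigidity.
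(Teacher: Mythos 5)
Your overall strategy — lift both cycle maps to morphisms of presheaves of complexes on $(Sm/\R)_{Zar}$ with a common source (a motivic complex) and target ($\phi_*\Z/2[-q]$), then invoke Theorem~\ref{MT} to force them to agree — is exactly the paper's approach. The paper implements it by introducing explicit complexes $\Z/2(q)^{Bor}$, $\Z/2(q)^{Bor}_\R$, $\mcal{R}(q)$ etc.\ satisfying Nisnevich descent, factoring (\ref{themap}) through $\mcal{R}(q)$ and (\ref{amap}) through $tr_{\leq 2q}\R\epsilon_*\Z/2 \to \Z/2(q)^{Bor} \to \Z/2(q)^{Bor}_\R$, where the middle arrow is the quasi-isomorphism of Proposition~\ref{CBE} built from Cox's theorem and Voevodsky's resolution of the Milnor conjecture; your plan to use Beilinson--Lichtenbaum and the comparison $H^*_{et}(X;\mu_2^{\otimes q})\iso H^*_G(X(\C);\Z/2)$ is the same content. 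One point to correct: Theorem~\ref{MT} is not applied via ``checking on generic stalks'' or ``rigidity.'' It is simply the computation $\Hom_{D^-((Sm/\R)_{Zar})}(\Z/2(q),\,\phi_*\Z/2[-q])=\Z/2$, obtained by taking $\phi^*$, using that CW-complexes are locally contractible, and identifying the resulting complex with a product of Eilenberg--MacLane pieces. Once you have realized both cycle maps as elements of this Hom group, the only remaining verification is that each is nonzero (clear, e.g.\ by evaluating on $\spec\R$ or $\P^q_\R$), whence they coincide because the group is $\Z/2$. So the ``delicate point'' you flag at the end is in fact the easy part; the real work is in the explicit sheaf-theoretic lifts, which you correctly identify as the main obstacle.
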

  \begin{proof}
 Consider the following complexes of Zariski sheaves on $Sm/\R$:
   \begin{align*}
\mathbb{Z}/2(q)(X) & = (z_{equi}(\P_{\R}^{q/q-1},0)(X\times_{\R}\Delta^{\bullet}_{\R})\otimes\Z/2)[-2q] \\
\mathbb{Z}/2(q)^{sst}(X)   & = \sing_{\bullet} (\mcal{Z}^{q}/2(X_{\C})^{G})[-2q] \\
\mathbb{Z}/2(q)^{top}(X)  & = \Hom _{cts}({X(\C)\times\Delta^{\bullet}_{top}},{\mcal{Z}/2_{0}(\A^{q}_{\C})})^{G}[-2q] \\
\mathbb{Z}/2(q)^{Bor}(X) & = \Hom  _{cts}({X(\C)\times EG\times\Delta^{\bullet}_{top}},{\mcal{Z}/2_{0}(\A^{q}_{\C})})^{G}[-2q]\\
\mathbb{Z}/2(q)^{Bor} _\R(X) & = \Hom  _{cts}({X({\R})\times EG\times\Delta^{\bullet}_{top}},{\mcal{Z}/2_{0}(\A^{q}_{\C})})^G[-2q], \\
\mcal{R}(q)(X) &= (\sing_{\bullet} \mcal{Z}^{q}/2(X_{\C})^{G}/\sing_{\bullet}\mcal{Z}^{q}/2(X_{\C})^{av})[-2q]
\end{align*}

These complexes all satsify Nisnevich descent for standard reasons. See e.g. \cite[Section 5]{HV:VT} for the first three. Similarly the complex $\mathbb{Z}/2(q)^{Bor} _\R(X)$ because taking real points of a distinguished Nisnevich square of real varieties gives a  homotopy pushout square of spaces. The last complex satisfies Nisnevich descent by Proposition \ref{boprops}. As a consequence we have

\begin{align*}
\mathbb{H}^{i}_{Zar}(Z; \mathbb{Z}/2(q)) & = H^{i}_{\mcal{M}}(X;\Z/2(q)) \\
 \mathbb{H}_{Zar}^{i}(X;\mathbb{Z}/2(q)^{sst}) & = L^{q}H\R^{i-q,q}(X;\Z/2) \\
 \mathbb{H}_{Zar}^{i}(X;\mathbb{Z}/2(q)^{top}) & = H^{i-q,q}(X(\C);\underline{\Z/2}) \\
 \mathbb{H}^{i}_{Zar}(X;\mathbb{Z}/2(q)^{Bor}) & = H^{i}_{G}(X(\C);\Z/2) \\
\mathbb{H}^{i}_{Zar}(X;\mathbb{Z}/2(q)^{Bor} _\R) & = H^{i}_{G}(X(\R);\Z/2) \\
  \mathbb{H}_{Zar}^{i}(X;\mcal{R}(q)) & = RL^{q}H^{i-q}(X),
\end{align*}
where $L^{q}H\R^{i-q,q}(X;\Z/2)$ denotes Friedlander-Walker's real morphic cohomology \cite{FW:real} and 
$H^{i-q,q}(X(\C);\underline{\Z/2})$ is Bredon cohomology.

The map (\ref{themap}) is induced by the map of complexes
\begin{equation}\label{2ndm}
\mathbb{Z}/2(q)\xrightarrow{1)} \mathcal{R}(q)\xrightarrow{2)}  \phi _*\mathbb{Z}/2[-q].
\end{equation}
The map 1) is given by the ``usual'' cycle map from motivic cohomology to reduced morphic cohomology. It is defined as the composite $\Z/2(q) \to \Z/2(q)^{sst}\to \mcal{R}(q)$. 
The map 2) is obtained by adjunction from the composite
$$
\phi^*(\mathcal{R}(q)) \xrightarrow{\wkeq} Map _{cts}(X(\mathbb{R})\times \Delta^* _{top}, \mcal{R}_0(\mathbb{A}^q _{\mathbb{R}}))[-2q]\xrightarrow{\simeq}\mathbb{Z}/2[-q]
$$
which arises because $\mcal{R} _0(\mathbb{A}^q _{\mathbb{R}})\simeq K(\mathbb{Z}/2,q)$ and any CW complex has an open cover given by  contractibles.

We show that the  map (\ref{amap}) is induced by a composite of maps:
   \begin{equation}\label{1stm}
   \mathbb{Z}/2(q)\xrightarrow{} tr _{\leq 2q}\mathbb{R}\epsilon _*\mathbb{Z}/2 \xrightarrow{3)} \mathbb{Z}/2(q)^{Bor} \xrightarrow{4)}
\mathbb{Z}/2(q)^{Bor} _\R\xrightarrow{5)} \phi _*\mathbb{Z}/2[-q].
   \end{equation}
of Zariski complexes of sheaves in $D^-(Shv _{Zar}(Sm/\mathbb{R}))$ which we now explain. Write $\epsilon :X _{et}\rightarrow X _{Zar}$ for the usual map of sites. The first unlabeled map is the cycle map from motivic cohomology to etale cohomology. The map 3) is obtained  in Proposition \ref{CBE} using Cox's theorem \cite{Cox:real}. The map 4) is obtained by restriction to real points. The map 5) will be obtained from the adjoint of a map $\phi^*(\mathbb{Z}/2(q)^{Bor} _\R)\rightarrow \Z/2[-q]$ as follows. Every CW-complex is locally contractible and so 
$$
\phi^*(\mathbb{Z}/2(q)^{Bor} _\R) \wkeq
\Hom  _{cts}(EG\times\Delta^{\bullet}_{top},{\mcal{Z}/2_{0}(\A^{q}_{\C})})^G[-2q]
$$ 
is a quasi-isomorphism of complexes of Zariski sheaves where the right-hand side is the constant sheaf.   In $D^-_{\Z/2}(Ab)$, any complex is quasi-isomorphic with the complex given by its cohomology. We have that  $H^{p,q}(EG;\underline{\Z/2}) = H^{p+q}(BG;Z/2)$ and therefore
\begin{equation*}
 H^k\Hom  _{cts}(EG\times\Delta^{\bullet}_{top},{\mcal{Z}/2_{0}(\A^{q}_{\C})})^G[-2q] = 
   H^{q-k,q}(EG;\underline{\Z/2})=\Z/2 
\end{equation*}
for $0\leq k \leq 2q$ and is $0$ otherwise. This gives us the map
  $$
\phi^*(\mathbb{Z}/2(q)^{Bor} _\R) \simeq \oplus _{0\leq i\leq 2q} \mathbb{Z}/2[-i]\rightarrow \mathbb{Z}/2[-i],
$$
 which induces the $i^{th}$ projection on cohomology 
 $$
H^n _G(X(\R),\Z/2)=\oplus _{0\leq i\leq n}H^{n-i}(X(\R),\Z/2)\rightarrow H^{n-i}(X(\R),\Z/2).
$$ 
In particular the adjoint of this map for $i=q$ gives us the map 5)
    $$
\mathbb{Z}/2(q)^{Bor} _\R \rightarrow \phi _*\Z/2[-q].
$$
 
Because both map \ref{1stm} and map \ref{2ndm} induce non-trivial maps in cohomology they have to coincide by Theorem \ref{MT}.
   
\end{proof}
In the above proof, we used the following proposition which relies on Cox's theorem identifying the  etale cohomology of a real variety with Borel cohomology.
\begin{proposition} \label{CBE}
There is a quasi-isomorphism  $\rho: tr_{\leq 2q}\R\epsilon _*\Z/2\rightarrow \Z/2(q)^{Bor}$ of complexes of Zariski sheaves. 
\end{proposition}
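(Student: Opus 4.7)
The plan is to construct $\rho$ as a morphism induced by a natural comparison of sites, and then reduce the quasi-isomorphism claim to Cox's theorem via a stalk-wise check.

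For the construction, I would consider the morphism of sites $\tau:(Sm/\R)_{G\text{-}an}\to (Sm/\R)_{et}$ sending an \'etale map $U\to X$ to the associated $G$-equivariant analytic map $U(\C)\to X(\C)$, where $G=\Z/2$ acts by complex conjugation. Since $\tau^*\Z/2=\Z/2$, the adjunction unit together with application of $\R\epsilon_*$ produces a natural morphism of complexes of Zariski sheaves
\[
\R\epsilon_*\Z/2\longrightarrow \R\epsilon_*\R\tau_*\Z/2,
\]
whose right-hand side computes Borel cohomology $H^*_G(X(\C);\Z/2)$ in hypercohomology. To land in $\Z/2(q)^{Bor}$ I would use that $\mcal{Z}/2_0(\A^q_\C)$ is a $G$-equivariant topological model whose underlying space has the homotopy type of $K(\Z/2,2q)$, so that in the stable range its Borel cohomology with $\Z/2$-coefficients recovers Borel cohomology with trivial coefficient $\Z/2$. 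This yields a natural quasi-isomorphism $tr_{\leq 2q}\R\epsilon_*\R\tau_*\Z/2\simeq \Z/2(q)^{Bor}$, and $\rho$ is defined as the composition with the truncated unit map above.

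To verify that $\rho$ is a quasi-isomorphism it suffices to check on Zariski stalks. For $Y=\spec\mcal{O}_{X,x}$, the stalk of the cohomology sheaf $\mcal{H}^i$ of the source is $H^i_{et}(Y;\Z/2)$, while that of the target in the range $i\leq 2q$ is $H^i_G(Y(\C);\Z/2)$. Cox's theorem \cite{Cox:real} furnishes a canonical isomorphism between these groups; crucially, this isomorphism is induced by the very comparison map $\tau$ used in the construction of $\rho$, so that $\rho$ agrees stalk-wise with Cox's isomorphism and is therefore a quasi-isomorphism in the relevant range.

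The main obstacle is establishing the identification $tr_{\leq 2q}\R\epsilon_*\R\tau_*\Z/2\simeq \Z/2(q)^{Bor}$. This is a $G$-equivariant stable-range calculation asserting that the specific cochain model provided by $\Z/2(q)^{Bor}$ correctly represents truncated Borel cohomology with constant $\Z/2$-coefficients; the truncation at $2q$ precisely reflects the range in which the $K(\Z/2,2q)$-model is ``stable''. Once this identification is in hand, the compatibility of $\rho$ with Cox's map at the level of stalks is a naturality statement inherent to the construction of $\tau$, and the proposition follows.
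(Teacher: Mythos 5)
Your approach differs genuinely from the paper's, and it contains two unresolved gaps that the paper's argument is specifically designed to circumvent.

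First, you assert that the stalk-wise isomorphism furnished by Cox's theorem is ``induced by the very comparison map $\tau$ used in the construction of $\rho$''. This is precisely the subtle point, and it is not verified: Cox's theorem gives an abstract isomorphism $H^*_{et}(X;\Z/2)\cong H^*_G(X(\C);\Z/2)$ for smooth real varieties, but it does not come packaged as the statement that \emph{your} specific comparison morphism realizes it. Without an explicit argument tracing through the constructions, you cannot conclude that $\rho$ agrees stalk-wise with Cox's map. Second, you flag but do not resolve the identification $tr_{\leq 2q}\R\epsilon_*\R\tau_*\Z/2\simeq\Z/2(q)^{Bor}$; this is a real step requiring an argument that the explicit cochain model $\Z/2(q)^{Bor}$ built from $\mcal{Z}/2_0(\A^q_\C)$ agrees, after truncation, with derived pushforward of the constant sheaf along your site map.

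The paper avoids both of these issues with a different strategy. It starts from the already-constructed map $\Z/2(q)^{Bor}\to\R\epsilon_*\Z/2$ from \cite[Prop.\ 5.5]{HV:VT} and shows it becomes a quasi-isomorphism after truncation. Rather than trying to identify this map with Cox's isomorphism on the nose, the argument proceeds in two steps: (i) using a weight-shifting diagram and Voevodsky's resolution of the Milnor conjecture, the composite from motivic cohomology through $\Z/2(2q)^{Bor}$ to truncated \'etale cohomology is the usual cycle map and hence a quasi-isomorphism, which forces $H^n_G(X,\Z/2)\to H^n_{et}(X,\Z/2)$ to be \emph{surjective} for $n\leq 2q$; (ii) Cox's theorem is then invoked only to say the two finite-dimensional $\Z/2$-vector spaces have the \emph{same dimension}, so the surjection is an isomorphism. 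In other words, Cox's theorem is used purely as a dimension count, not to identify a particular morphism. To make your route rigorous you would need to supply the missing identification of $\rho$ with Cox's map on stalks (or directly prove the stalk-level isomorphism by an independent computation), and to establish the model-comparison you acknowledged as an obstacle.
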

\begin{proof}  
We show that the canonical map $\Z/2(q)^{Bor}\rightarrow \R\epsilon _*\Z/2$, constructed in \cite[Proposition 5.5]{HV:VT} induces a quasi-isomorphism $\Z/2(q)^{Bor}\rightarrow tr_{\leq 2q}\R\epsilon _*\Z/2$. The map $\rho$ is its inverse in the derived category. Its hypercohomology gives the cycle map  $H^n _G(X,\Z/2)\rightarrow H^n _{et}(X,\Z/2)$, for every $n\geq 0$.

There is a quasi-isomorphism $\mathbb{R}\epsilon _*\mu^{\otimes q} _2 \xrightarrow{\wkeq} \mathbb{R}\epsilon _*\mu^{\otimes q+i} _2$ and a commutative diagram
\begin{equation}\label{comm}
\xymatrix{
{\Z /2}(q)^{Bor}\ar[r]\ar[d]^{\simeq} & tr_{\leq 2q}\mathbb{R}\epsilon _*\mu^{\otimes q} _2 \ar[d] ^{\simeq}\\
tr_{\leq 2q}{\Z /2}(q+i)^{Bor}\ar[r] & tr_{\leq 2q}\mathbb{R}\epsilon _*\mu^{\otimes q+i} _2. 
}
\end{equation}
Take $i=q$. The result follows by showing the bottom map is a quasi-isomorphism. 
In \cite[Section 5]{HV:VT} it is shown that the composite
$$
\Z/2(2q) \to tr_{\leq 2q}\Z/2(2q)^{Bor} \to tr_{\leq 2q}\mathbb{R}\epsilon _*\mu^{\otimes 2q} _2 
$$
is the usual cycle map $\Z/2(2q) \to tr_{\leq 2q}\mathbb{R}\epsilon _*\mu^{\otimes 2q} _2$.  
By Voevodsky's resolution of the Milnor conjecture \cite{Voev:miln} this cycle map is a quasi-isomorphism. This implies that that $H^n _G(X,\Z/2)\rightarrow H^n _{et}(X,\Z/2)$ is a surjective map between finite-dimensional spaces for $n\leq 2q$. By \cite{Cox:real} both vector spaces have the same dimension and so the map is an isomorphism. Therefore $\Z/2(q)^{Bor}\simeq tr_{\leq 2q}\R\epsilon _*\Z/2$.

\end{proof}

Let $a_{Zar}$ denote  Zariski sheafification and define the following sheaves 
\begin{align*}
\mathcal{H}^n_{\mcal{M}}(q) =& a_{Zar}(U\mapsto H^{n}_{\mcal{M}}(U(\mathbb{C})),\mathbb{Z}/2(q))) \\
\mathcal{H}^n _\mathbb{C}(G) =& a_{Zar}(U\mapsto H^{n}_{G}(U(\mathbb{C})),\mathbb{Z}/2)) \\
 \mathcal{H}^n _\mathbb{R}(G) =& a_{Zar}(U\mapsto H^{n}_{G}(U(\mathbb{R}),\mathbb{Z}/2)), \\
 \mathcal{H}^n _{et}(q) =& a_{Zar}(U\mapsto H^{n}_{et}(U,\mu_{2}^{\otimes q})), \\
 \mathcal{H}^{n}_\mathbb{R} =& a_{Zar}(U\mapsto H^{n}(U(\mathbb{R}),\mathbb{Z}/2)). 
\end{align*}

Sheafifying the diagram in Theorem \ref{thm:cycmaps} for $p=q$ gives the commutative diagram 
 $$
\xymatrix{
       \mcal{H}^q _{\mcal{M}}(q)\ar[r]^{\iso}\ar[d] & \mcal{H}^q _{et}(q)\ar[r]^{\iso}\ar[d] & \mcal{H}^q_{\C}(G)\ar[d]\\     
        \mcal{RL}^q\mcal{H}^{0}\ar[r]^{cyc_{q,0}} & \mcal{H}^{0}_{\R}  & \ar[l] \mcal{H}^q_{\R}(G).\\
}
$$

By Corollary \ref{dge} and Corollary \ref{sm} the sheafified cycle map $cyc_{q,0}$ induces the usual cycle map between reduced morphic cohomology and singular cohomology
$$
RL^{q}H^{n}(X)= H^{n}_{Zar}(X;\mcal{RL}^{q}\mcal{H}^{0})  \to  H^{n}_{Zar}(X;\mcal{H}^{0}_{\R})= H^{n}(X(\R);\Z/2).
$$

The map $\mcal{H}^q _{\mcal{M}}(q)\to \mcal{H}^q _{et}(q)$ induces the Bloch-Ogus isomorphism 
$$
CH^{q}(X) = H^{q}_{Zar}(X; \mcal{H}^q _{\mcal{M}}(q)) \iso H^{q}_{Zar}(X; \mcal{H}^q _{et}(q)).
$$

The composite $\mcal{H}^q _{et}(q) \to \mcal{H}^q_{\C}(G)\to \mcal{H}^q_{\R}(G) \to \mcal{H}^{0}_{\R}$  induces a map 
$$
CH^{q}(X) = \mcal{H}^{q}_{Zar}(X;\mcal{H}^{q}_{et}(q)) \to \mcal{H}^{q}_{Zar}(X;\mcal{H}^{0}_{\R}) = H^{q}(X(\R);\Z/2)
$$
 which by \cite[Remark 2.3.5]{CTS:zero} is just the Borel-Haeflinger cycle map sending a closed irreducible $Z\subseteq X$ to the Poincare dual of the fundamental class of $Z$ if $\dim Z(\R) = \dim Z$ and zero otherwise. The above commutative diagram tells us that this agrees with the composite $\mcal{H}^{q}(q)_{et} \iso \mcal{H}^{q}_{\mcal{M}}(q) \to \mcal{LR}^{q}\mcal{H}^{0} \to \mcal{H}^{0}_{\R}$ and so we immediately obtain the following.
  
\begin{theorem}\label{bh}
Let $X$ be a smooth quasi-projective real variety. For any $q\geq 0$, the Borel-Haeflinger cycle map 
factors as the composite  
$$
CH^q(X)/2\rightarrow  RL^qH^q(X)\xrightarrow{cyc_{q,n}}  H^q(X(\mathbb{R}),\mathbb{Z}/2),
$$
where the first map is the natural quotient.
\end{theorem}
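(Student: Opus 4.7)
The plan is to extract the factorization from the sheafified commutative diagram established in Theorem \ref{thm:cycmaps}, specialized to $p=q$. The key observation is that both the Borel-Haeflinger cycle map and the cycle map going through reduced morphic cohomology arise as hypercohomology of a common diagram of Zariski sheaves, so they automatically factor through a common intermediate object, namely $RL^qH^q(X)$.

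First I would specialize the diagram in Theorem \ref{thm:cycmaps} to $p=q$ and sheafify in the Zariski topology, recovering the commutative diagram displayed just before the theorem, with corners $\mcal{H}^q_{\mcal{M}}(q)$, $\mcal{H}^q_{et}(q)$, $\mcal{H}^q_{\C}(G)$, $\mcal{RL}^q\mcal{H}^0$, $\mcal{H}^0_{\R}$, and $\mcal{H}^q_{\R}(G)$. The top row consists of isomorphisms: $\mcal{H}^q_{\mcal{M}}(q)\iso\mcal{H}^q_{et}(q)$ by Voevodsky's resolution of the Milnor conjecture (used in Proposition \ref{CBE}), and $\mcal{H}^q_{et}(q)\iso \mcal{H}^q_{\C}(G)$ by Cox's theorem via Proposition \ref{CBE}. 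Applying $H^q_{Zar}(X;-)$ to the outer rectangle, the Bloch-Ogus isomorphism identifies $H^q_{Zar}(X;\mcal{H}^q_{\mcal{M}}(q))$ with $CH^q(X)/2$, Corollary \ref{dge} identifies $H^q_{Zar}(X;\mcal{RL}^q\mcal{H}^0)$ with $RL^qH^q(X)$, and trivially $H^q_{Zar}(X;\mcal{H}^0_{\R}) = H^q(X(\R);\Z/2)$.

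Next I would invoke \cite[Remark 2.3.5]{CTS:zero} to identify the hypercohomology of the top-then-right composite $\mcal{H}^q_{\mcal{M}}(q)\to\mcal{H}^q_{et}(q)\to\mcal{H}^q_{\C}(G)\to\mcal{H}^q_{\R}(G)\to\mcal{H}^0_{\R}$ with precisely the Borel-Haeflinger cycle map, which sends an irreducible closed $Z\subseteq X$ of codimension $q$ to the Poincare dual of the fundamental class of $Z(\R)$ when $\dim Z(\R)=\dim Z$, and to zero otherwise. By commutativity of the sheafified diagram, this map equals the composite obtained by going down the left edge and then across the bottom, namely $CH^q(X)/2 \to RL^qH^q(X)\xrightarrow{cyc_{q,n}} H^q(X(\R);\Z/2)$. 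The leftmost arrow is the natural quotient because it is induced by the reduction mod $2$ map $CH^q(X)=H^{2q}_{\mcal{M}}(X;\Z(q))\to H^{2q}_{\mcal{M}}(X;\Z/2(q))$.

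The substantive work has already been done in Theorem \ref{thm:cycmaps}, so the present statement is essentially a bookkeeping consequence of the compatibilities proved there. The only spot that requires care is confirming that the top-then-right composite in the sheafified diagram genuinely reproduces the Borel-Haeflinger map (and not some Bockstein or Tate twist of it), but this is precisely what \cite[Remark 2.3.5]{CTS:zero} records in the form needed.
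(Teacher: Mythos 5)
Your proof is correct and takes essentially the same route as the paper: sheafify the diagram of Theorem \ref{thm:cycmaps} at $p=q$, apply $H^q_{Zar}(X;-)$, identify the corners via Bloch--Ogus and Corollary \ref{dge}, and use \cite[Remark 2.3.5]{CTS:zero} to recognize the top-right composite as the Borel--Haeflinger map. Your added remark that the left vertical arrow gives the natural quotient because it comes from reduction mod $2$ on motivic cohomology is a useful clarification the paper leaves implicit.
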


Next we compare the $s$-map in reduced morphic cohomology with the operation $(-1)$ in \'etale cohomology. Recall that the operation $(-1):H^{i}_{et}(X;\mu_{2}^{\otimes q}) \to H^{i+1}_{et}(X;\mu_{2}^{\otimes q+1})$ is defined to be multiplication with the class $(-1)$ which is  the image of $-1$ under the boundary map $H^{0}_{et}(X;\mathbb{G}_{m})\to H^{1}_{et}(X;\mu_{2})$ in the Kummer sequence. By naturality this is equal to the pullback of $(-1)\in H^{1}_{et}(\spec(\R);\mu_{2})$ under the structure map $X\to \spec(\R)$. Sheafifying gives the operation on Zariski sheaves $(-1):\mathcal{H}^q _{et}(q)\rightarrow \mathcal{H}^{q+1}_{et}(q+1)$.

\begin{proposition} \label{cel} 
Let $X$ be a smooth quasi-projective real variety of dimension $d$. 
The following square commutes for any $q\geq 0$
$$
\begin{CD}
       H^{i}_{Zar}(X;\mcal{H}^{q}_{et}(q)) @>>> RL^qH^{i}(X)\\
                     @VV \cup (-1) V         @VV\cup sV\\
        H_{Zar}^{i}(X;\mcal{H}_{et}^{ q+1}(q+1)) @>>>  RL^{q+1}H^i(X).
\end{CD}
$$
For any $q\geq d+1$ all maps are isomorphisms. 
\end{proposition}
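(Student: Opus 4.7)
My plan is to establish commutativity of the square by recognizing both vertical maps as arising from a common sheaf-level operation via the cycle-map chain of Theorem \ref{thm:cycmaps}, and then to deduce the isomorphism statement for $q \geq d+1$ by identifying both columns with $\mcal{H}^0_\R$ in that range.

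For commutativity, I would lift both $\cup(-1)$ and the $s$-operation to compatible maps at the level of complexes of Zariski sheaves. On the \'etale side, $\cup(-1)$ is cup product with the pullback of $(-1) \in H^1_{et}(\spec(\R);\mu_2)$; under the Cox identification of Proposition \ref{CBE}, this corresponds to multiplication by the Bott class $u \in H^1_G(\spec(\R);\Z/2)$ in Borel cohomology, via the pullback from $\spec(\R)$. By its construction in \cite{Teh:real}, the $s$-map on reduced morphic cohomology is compatible with this same Bott-class operation (it is a suspension-type map, encoded at the complex level as a map $\mcal{R}(q) \to \mcal{R}(q+1)[1]$). Hence each intermediate cycle map in the chains (\ref{1stm}) and (\ref{2ndm}) intertwines the two operations, and the square commutes after sheafification and passage to Zariski hypercohomology.

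For the isomorphism statement when $q \geq d+1$, first, by the discussion following Theorem \ref{pd}, the cycle map $cyc_{q,n}: RL^qH^n(X) \to H^n(X(\R);\Z/2)$ is an isomorphism for $q \geq d$, and the factorization $cyc_{q,n} = cyc_{q+1,n} \circ s$ forces the right vertical to be an isomorphism (in fact the identity on $H^i(X(\R);\Z/2)$ under the identifications). Second, by Colliot-Th\'el\`ene--Parimala \cite{CTP:real} combined with Cox's theorem (cf.\ Proposition \ref{CBE}), the comparison $\mcal{H}^q_{et}(q) \to \mcal{H}^0_\R$ is an isomorphism of Zariski sheaves for $q \geq d+1$, under which $\cup(-1)$ corresponds to the identity; so the left vertical is also an isomorphism. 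Finally, the horizontal maps become isomorphisms because both columns identify with $\mcal{H}^0_\R$ compatibly, via the diagram preceding Theorem \ref{bh}. The main obstacle is the commutativity step: rigorously pinning down the compatibility of the geometrically defined $s$-operation with $\cup(-1)$ requires unpacking the construction of the $s$-map in \cite{Teh:real} and verifying its multiplicativity relative to the sheaf-level cycle-map chain of Theorem \ref{thm:cycmaps}; the rest of the argument is essentially formal given the results already available in the paper.
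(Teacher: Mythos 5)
Your overall plan is the same as the paper's: realize both vertical arrows as multiplication by a class pulled back from $\spec(\R)$ and show those classes correspond under the comparison map. But the commutativity step—which you yourself flag as ``the main obstacle''—is exactly where your proposal has a genuine gap. You route the comparison through Borel cohomology and the Bott class $u\in H^1_G(\spec(\R);\Z/2)$, and then assert that ``the $s$-map on reduced morphic cohomology is compatible with this same Bott-class operation,'' but you give no argument for this; verifying it from Teh's construction of $s$ is precisely the work you have not done.

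The paper's proof sidesteps that entirely. It notes that the $s$-operation is multiplication by the generator $s\in RL^1H^0(\spec(\R))$ and $\cup(-1)$ is multiplication by $(-1)\in H^1_{et}(\spec(\R);\mu_2)$, and that both of these groups are $\Z/2$. To see that the comparison $H^1_{et}(\spec(\R);\mu_2)\iso RL^1H^0(\spec(\R))$ carries $(-1)$ to $s$, one only needs to observe that both classes map to the (unique) generator of $H^0(pt;\Z/2)$ under the further cycle maps, which pins down the correspondence with no need to unpack the internal construction of the $s$-map or to pass through $H^*_G$. Naturality of multiplication by a pulled-back class then gives commutativity of the square directly. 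Your treatment of the isomorphism range ($cyc_{q,n}$ an isomorphism for $q\geq d$ on the morphic side, Cox/Colliot-Th\'el\`ene--Parimala on the \'etale side) matches what the paper does, though the paper states it more compactly as: for $q\geq d$ both horizontal maps to $H^i(X(\R);\Z/2)$ are isomorphisms, hence so are the vertical ones. So: right idea, but the key commutativity step is left as an acknowledged gap where the paper has a short, clean argument.
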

\begin{proof} 
The $s$-operation is induced by multiplication with  $s\in RL^1H^0(\spec(\R))$, where $s$ is the generator. Sheafifying the $s$-map gives a map of Zariski  sheaves and the composite 
$\mathcal{RL}^q\mcal{H}^0\xrightarrow{} \mathcal{RL}^{q+1}\mcal{H}^0\to\mathcal{H}_{\R}^0 $
 induces the usual $s$-map on sheaf cohomology. 

The class $(-1)\in H^1 _{et}(\spec(\R);\mu _2)$ and $(-1)$ maps to $s$ under the isomorphism $H^{1}_{et}(\spec(\R);\mu_{2})\iso RL^{1}H^{0}(\spec(\R))$ because they both map to the generator of $H^{0}(pt;\Z/2)$. Therefore the above square commutes. When $q\geq d$ then the vertical maps are isomorphisms, both $H^{i}_{Zar}(X;\mcal{H}^{q}_{et}(q)) \to H^{i}(X(\R);\Z/2)$ and  $RL^qH^{i}(X) \to H^{i}(X(\R);\Z/2)$ are isomorphisms.
\end{proof}
\begin{corollary}
Let $X$ be a smooth quasi-projective real variety. We have the isomorphism of rings 
$$
H^* _{et}(X,\mu^{\otimes *} _2)[s^{-1}]\simeq RL^*H^*(X)[s^{-1}]\simeq H^*(X(\R),\Z/2),
$$ 
where $s=(-1)$ under the left-hand isomorphism.
\end{corollary}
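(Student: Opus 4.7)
The plan is to deduce the corollary by taking colimits in Proposition \ref{cel}. Set $d=\dim X$. That proposition provides, for each $q$ and $i$, a commutative square with horizontal cycle-type maps $H^{i}_{Zar}(X;\mcal{H}^q_{et}(q))\to RL^qH^i(X)$, left vertical multiplication by $(-1)$, and right vertical multiplication by $s$, and it asserts that all four maps are isomorphisms once $q\geq d+1$. Combined with the fact, recalled after the definition of the generalized cycle map, that $cyc_{q,n}\colon RL^qH^n(X)\to H^n(X(\R);\Z/2)$ is an isomorphism for $q\geq d$, every map in sight becomes an isomorphism in the stable range $q\gg 0$.

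Next I would interpret the localizations. The elements $s\in RL^1H^0(\spec\R)$ and $(-1)\in H^1_{et}(\spec\R;\mu_2)$ act on $RL^*H^*(X)$ and $H^*_{et}(X;\mu_2^{\otimes *})$ by pullback along $X\to\spec\R$ and cup product; in each fixed cohomological degree $n$ the corresponding localized bigraded rings are computed as the filtered colimit over iterated multiplication by $s$ (respectively by $(-1)$). The square of Proposition \ref{cel} shows that the horizontal cycle map intertwines these two actions and is an isomorphism for $q\geq d+1$. Passing to the colimit in each cohomological degree therefore produces an isomorphism $H^*_{et}(X;\mu_2^{\otimes *})[(-1)^{-1}]\iso RL^*H^*(X)[s^{-1}]$ sending $(-1)\mapsto s$, which is both the first isomorphism of the corollary and the parenthetical identification $s=(-1)$.

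For the second isomorphism I would use the compatibility $cyc_{q,n}=cyc_{q+1,n}\circ s$ recalled in the preliminaries. When $q\geq d$ both cycle maps are isomorphisms, which forces the $s$-action to become the identity on $H^n(X(\R);\Z/2)$ in the stable range. Consequently $\colim_q RL^qH^n(X)=H^n(X(\R);\Z/2)$, and the tautological maps assemble into an isomorphism $RL^*H^*(X)[s^{-1}]\iso H^*(X(\R);\Z/2)$.

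The main obstacle is tracking the ring structure through the colimits rather than just the underlying abelian groups. The cycle-type and restriction maps built in Theorem \ref{thm:cycmaps} and used in the square of Proposition \ref{cel} are multiplicative for cup product, and localization at a central bihomogeneous element preserves ring structure, so the required compatibility follows formally once one is careful about the two gradings; no new geometric input is needed.
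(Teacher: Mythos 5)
Your overall strategy is the one the paper has in mind (the corollary is stated without proof, immediately after Proposition \ref{cel}): colimit the commutative square of Proposition \ref{cel} over iterated multiplication by $(-1)$, resp.\ $s$, and use that for $q\geq d$ the cycle map $RL^qH^n(X)\to H^n(X(\R);\Z/2)$ is already an isomorphism so the right‑hand colimit stabilizes. Your argument for the second isomorphism $RL^*H^*(X)[s^{-1}]\simeq H^*(X(\R);\Z/2)$, via $cyc_{q,n}=cyc_{q+1,n}\circ s$ and stability, is correct and complete.

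There is, however, a gap in your treatment of the first isomorphism, and it occurs exactly where you write ``Passing to the colimit \dots therefore produces an isomorphism $H^*_{et}(X;\mu_2^{\otimes *})[(-1)^{-1}]\iso RL^*H^*(X)[s^{-1}]$.'' What Proposition \ref{cel} gives you, after the colimit, is an isomorphism
\[
\colim_q H^i_{Zar}\bigl(X;\mcal{H}^q_{et}(q)\bigr)\;\iso\;\colim_q RL^qH^i(X),
\]
where the left‑hand term is the Zariski cohomology of the \emph{sheafified} \'etale cohomology. You silently replace this with the genuine \'etale cohomology groups $H^n_{et}(X;\mu_2^{\otimes q})$ appearing in the statement. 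These two localized theories are not tautologically the same: they are linked by the coniveau spectral sequence $E_2^{p,j}=H^p_{Zar}(X;\mcal{H}^j_{et}(q))\Rightarrow H^{p+j}_{et}(X;\mu_2^{\otimes q})$, and one has to argue that this spectral sequence degenerates after inverting $(-1)$. That degeneration is a real input (it amounts to the vanishing of the localized sheaves $\mcal{H}^j_\R$ for $j>0$, cf.\ Remark \ref{shvan}, or equivalently can be extracted from Cox's theorem together with equivariant localization and the diagram of Theorem \ref{thm:cycmaps}), and your proposal does not supply it. To be fair, the paper's own statement is guilty of the same conflation — Proposition \ref{cel} is phrased in terms of $H^i_{Zar}(X;\mcal{H}^q_{et}(q))$ while the corollary writes $H^*_{et}$ — so if the corollary is read as being about the Zariski sheaf cohomology, your proof is complete; if it is read literally as being about $H^*_{et}$, you need to add the coniveau degeneration step.
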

\begin{remark} 
The map $\mathcal{H}^q _{et}(q)\rightarrow \mathcal{RL}^q{H}^0$ is not in general an isomorphism of sheaves as we can see for the case of a smooth projective  variety of dimension $dim(X)=q$. In this case the first group surjects with non-trivial kernel in codimension $0,1,2$ (under some mild conditions on $X$) into cohomology of real points  $X(\mathbb{R})$ by \cite{CTS:zero}. On the other hand the latter group is the cohomology of real points by \cite{Teh:real}.
\end{remark}

We now turn our attention to natural transformations from various cycle cohomology theories of interest to singular cohomology. 

\begin{theorem} \label{MT} 
Let $k=\R$ or $\C$ and write $\phi: (Top) _{an}\rightarrow (Sm/k) _{Zar}$ the map of sites that send $X\mapsto X(k)$ where $(Top)$ is the category of spaces homeomorphic to finite dimensional CW complexes equipped with the usual topology. Then we have
\begin{enumerate}  
 \item $\Hom _{D^-((Sm/\R)_{Zar})}(\mathbb{Z}/2(q),\, \phi _*\mathbb{Z}/2[-q])=\mathbb{Z}/2$,
\item $\Hom _{D^-((Sm/\R)_{Zar})}(\mcal{R}(q),\phi _*\Z/2[-q]) =\Z/2$,
\item  $\Hom _{D^-((Sm/\C)_{Zar})}(\mathbb{Z}(q)^{sst},\, \mathbb{R}\phi _*\mathbb{Z}/n)=\mathbb{Z}/n$, for any $n\geq 1$.
\end{enumerate}
\end{theorem}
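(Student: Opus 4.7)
The plan for all three parts uses the same two-step recipe: transfer the computation into the derived category of Zariski sheaves on the appropriate site via Nisnevich descent of source and target, then reduce to a sheaf-level $\Hom$ by replacing the source with a quasi-isomorphic model concentrated in as few cohomological degrees as possible, and finally compute the resulting sheaf Hom stalkwise. Both source and target in each of (1)–(3) satisfy Nisnevich descent, by Corollary \ref{boprops} for $\mcal{R}(q)$ and by the standard arguments recalled in the proof of Theorem \ref{thm:cycmaps} for $\Z/2(q)$ and $\Z(q)^{sst}$; this is what legitimises the passage to $D^-(\mathrm{Shv}_{Zar})$.

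Part (2) is the cleanest and I would attack it first. Theorem \ref{vanishing} implies that $\mcal{R}(q) \wkeq \mcal{RL}^q\mcal{H}^0[-q]$ in the derived category of Zariski sheaves on $Sm/\R$, while Remark \ref{shvan} (i.e.\ the case $k=\dim X$ of Corollary \ref{dge}) shows that $\mathbb{R}\phi_*\Z/2 \wkeq \phi_*\Z/2$ is concentrated in Zariski degree zero, so $\phi_*\Z/2[-q]$ is a sheaf placed in degree $q$. Hence
$$\Hom_{D^-}(\mcal{R}(q), \phi_*\Z/2[-q])=\Hom_{\mathrm{Shv}_{Zar}}(\mcal{RL}^q\mcal{H}^0, \phi_*\Z/2).$$
Stalkwise both sheaves take the value $\Z/2$ at points with a real residue field and vanish elsewhere, so the only choices are the zero map and the canonical sheafified cycle map, giving $\Z/2$. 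For part (1) I would invoke Beilinson--Lichtenbaum to rewrite $\Z/2(q) \wkeq \tau_{\leq q}\mathbb{R}\epsilon_*\mu_2^{\otimes q}$ and then run a hyper-$\mathrm{Ext}$ spectral sequence abutting to $\Hom_{D^-}(\Z/2(q), \phi_*\Z/2[-q])$. Since the target is concentrated in a single degree, only one row of the $E_2$-page can contribute modulo an $\mathrm{Ext}^{\geq 1}$-vanishing for the lower cohomology sheaves of the source; the surviving entry is $\Hom_{\mathrm{Shv}}(\mcal{K}^M_q/2, \phi_*\Z/2)=\Z/2$, again by the stalkwise analysis (both sheaves are cyclic of order $2$ at real-residue-field points and zero elsewhere). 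Part (3) is analogous, with $\mu_2^{\otimes q}$ replaced by $\mu_n^{\otimes q}$ and the real realization replaced by the complex one $\mathbb{R}\phi_*\Z/n$; the top cohomology sheaf of the source is now $\mcal{K}^M_q/n$ and the Hom into the target is $\Z/n$.

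The main obstacle is the $\mathrm{Ext}$-vanishing used to collapse the spectral sequences in parts (1) and (3); part (2) avoids this because the source already has cohomology concentrated in a single degree. Concretely, for (1) one needs
$$\mathrm{Ext}^p_{\mathrm{Shv}_{Zar}(Sm/\R)}\bigl(\mcal{H}^{q-p}_{et}(\mu_2^{\otimes q}),\, \phi_*\Z/2\bigr)=0 \qquad (p\geq 1),$$
and similarly in (3). This should follow from the Bloch--Kato/Beilinson--Lichtenbaum description of the lower cohomology sheaves combined with Cox's theorem identifying \'etale and Borel cohomology on $Sm/\R$ (Proposition \ref{CBE}), together with a Zariski cohomological dimension argument restricted to the supports on which both sheaves are nontrivial. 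Once these $\mathrm{Ext}$-vanishings are verified, each spectral sequence degenerates to a single cyclic group of the claimed order, completing the proof.
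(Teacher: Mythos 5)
Your approach is genuinely different from the paper's, and unfortunately it has gaps at the crucial points. The paper proves all three parts by the same uniform mechanism: apply the derived adjunction $\Hom_{D^-}(\phi^*F,\mathbb{Z}/2[-q])=\Hom_{D^-}(F,\phi_*\mathbb{Z}/2[-q])$ and compute $\phi^*F$ directly on the topological side. Since every finite-dimensional CW-complex is locally contractible, $\phi^*$ of each of these cycle complexes is the (shifted) singular complex of the relevant topological cycle space; one then identifies that space (e.g.\ $\mathcal{Z}/2_0(\mathbb{A}^q_\C)^G\wkeq\prod_{i=q}^{2q}K(\Z/2,i)$ from dos Santos for part (1), $\mathcal{R}_0(\mathbb{A}^q_\R)\wkeq K(\Z/2,q)$ for part (2), contractibility giving $\phi^*\Z(q)^{sst}\wkeq\Z$ for part (3)) and reads off the Hom group by elementary homotopy theory. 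No coniveau input, no $\mathrm{Ext}$-vanishing, no stalk analysis is needed.

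Your route for part (2) — $\mathcal{R}(q)\wkeq\mathcal{RL}^q\mathcal{H}^0[-q]$ and $\mathbb{R}\phi_*\Z/2\wkeq\phi_*\Z/2$ via Theorem \ref{vanishing} and Remark \ref{shvan}, so that the derived Hom reduces to $\Hom_{\mathrm{Shv}_{Zar}}(\mathcal{RL}^q\mathcal{H}^0,\phi_*\Z/2)$ — is a valid and interesting reduction. However, the endgame is incomplete: the assertion that because both sheaves have stalk $\Z/2$ at the same class of points the Hom group must be $\Z/2$ does not follow. A morphism of sheaves is not determined by a collection of independent choices on stalks; compatibility with restriction maps could a priori cut down the choices to fewer than two, or permit more than two if the sheaves have disconnected support in a way that allows the stalk maps to be the identity in some places and zero in others. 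You would need to show that one (hence any nonzero) stalk map forces all the others, which requires actually examining how sections of $\mathcal{RL}^q\mathcal{H}^0$ restrict — in effect, rebuilding the nontriviality that the paper gets for free from $\phi^*\mathcal{R}(q)\wkeq\Z/2[-q]$.

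The larger gap is in parts (1) and (3). You set up a hyper-$\mathrm{Ext}$ spectral sequence and candidly note it needs $\mathrm{Ext}^p_{\mathrm{Shv}_{Zar}}(\mathcal{H}^{q-p}_{et}(\mu_2^{\otimes q}),\phi_*\Z/2)=0$ for $p\geq1$; but this is precisely what is hard, and "should follow from Bloch–Kato plus Cox plus a cohomological dimension argument" is not a proof. Zariski $\mathrm{Ext}$ groups between sheaves on the big site $Sm/\R$ are delicate objects and I see no straightforward route to the required vanishing; indeed, if one had such $\mathrm{Ext}$-vanishing for free, one would not need the adjunction trick that the paper uses. This is where the paper's method has a real advantage: by passing through $\phi^*$ everything collapses to a cohomology computation of Eilenberg–MacLane spaces, and no degeneration of spectral sequences over $(Sm/\R)_{Zar}$ is required at all. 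I would recommend switching to the adjunction argument, which also has the merit of handling all three parts by the same computation.
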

\begin{proof} 
We have a quasi-isomorphism $\Z/2(q)\wkeq \Z/2(q)^{sst}$ and that
$$
\Hom(\mathbb{Z}/2(q)^{sst},\, \phi _*\mathbb{Z}/2[-q])=\Hom(\phi^*(\mathbb{Z}/2(q)^{sst}),\, \mathbb{Z}/2[-q]).
$$ 
Because every CW complex is locally contractible in $D^-(Top)$ we have
$$
\phi^*(\mathbb{Z}/2(q)^{sst})[2q]\simeq \Hom_{cts}(- \times \Delta^{\bullet} _{top},\, \mcal{Z}/2_0(\mathbb{A}^q _{\C})^{G}) \wkeq \sing_{\bullet} \mcal{Z}/2_0(\mathbb{A}^q _{\C})^{G}.
$$
From \cite[(3.6)]{DS:real} it follows that $\mcal{Z}/2_0(\mathbb{A}^q _{\C})^{G} \wkeq \prod_{i=q}^{2q}K(\Z/2,i)$. 
This yields the result because we then have
$$
\Hom_{D^-(Top)}(\phi^*(\mathbb{Z}/2(q)),\, \mathbb{Z}/2[-q]) = \bigoplus_{i=q}^{2q}H^{q}(K(\Z/2,i);\Z/2) = \Z/2.
$$

In the proof of Theorem \ref{thm:cycmaps} we observed that $\phi^*\mcal{R}(q)\simeq \Z/2[-q]$ and so
\begin{multline*}
\Hom _{D^-((Sm/\R)_{Zar})}(\mcal{R}(q), \phi _*\Z/2[-q]) =\Hom _{D^-((Sm/\R)_{Zar})}(\phi^*\mcal{R}(q),\Z/2[-q]) \\
=\Hom _{D^-((Sm/\R)_{Zar})}(\Z/2[-q],\Z/2[-q]) =\Z/2.
\end{multline*}

The last item follows from the equivalence $\phi^{*}\Z(q)^{sst} \wkeq \Z$. We have
 $$
\Hom _{D^-((Sm/\C)_{Zar})}(\mathbb{Z}^{sst}(q), \mathbb{R}\phi _*\mathbb{Z}/n)= \Hom _{D^-((Sm/\R)_{Zar})}(\mathbb{Z},\mathbb{Z}/n)=\mathbb{Z}/n,$$
 for any $n\geq 1$.

\end{proof}

Because $\mathbb{Z}^{sst}(q) = tr_{\leq q}\mathbb{Z}(q)^{sst}$ we have, according to Theorem \ref{MT},  3), that
$$
\Hom _{D^-(Sm/\C)}(\mathbb{Z}(q)^{sst},\, tr _{\leq n}\mathbb{R}\phi_*\mathbb{Z}) = \Z.
$$
Let $\alpha$ be a generator of this group. Recall that Suslin's conjecture is the statement that $\alpha$ is a quasi-isomorphism for all $q\geq 0$.  We end the section with the following corollary of Voevodsky's resolution of the Beilinson-Lichtenbaum conjectures.
 \begin{theorem}
The map
 $\alpha \otimes \mathbb{Z}/n: \mathbb{Z}/n(q)^{sst} \rightarrow tr _{\leq q}\mathbb{R}\phi _*\mathbb{Z}/n$ is a quasi-isomorphism for any  $n\geq 2$. 
 \end{theorem}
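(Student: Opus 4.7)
The plan is to factor $\alpha\otimes\mathbb{Z}/n$ through the motivic complex and reduce to Voevodsky's resolution of the Beilinson-Lichtenbaum conjecture, in the same spirit as the comparison arguments of Section \ref{cyclemaps}. First, I would produce a commutative triangle in $D^-((Sm/\mathbb{C})_{Zar})$,
\begin{equation*}
\mathbb{Z}/n(q)\xrightarrow{\beta}\mathbb{Z}/n(q)^{sst}\xrightarrow{\alpha\otimes\mathbb{Z}/n}tr_{\leq q}\mathbb{R}\phi_{*}\mathbb{Z}/n,
\end{equation*}
where $\beta$ is the canonical comparison map from motivic to semi-topological cohomology sending an algebraic cycle to itself. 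A $\mathbb{Z}/n$-coefficient analog of Theorem \ref{MT}(3) shows that $\Hom_{D^-((Sm/\mathbb{C})_{Zar})}(\mathbb{Z}/n(q),\, tr_{\leq q}\mathbb{R}\phi_{*}\mathbb{Z}/n)$ is cyclic of order $n$, and the composite is a generator because its restriction to $\spec\mathbb{C}$ is nontrivial in degree zero. After identifying $\mathbb{R}\phi_{*}\mathbb{Z}/n$ with $\mathbb{R}\epsilon_{*}\mu_{n}^{\otimes q}$ via Artin's comparison theorem (a quasi-isomorphism on $Sm/\mathbb{C}$ after a non-canonical choice of primitive $n$-th root of unity), the composite $(\alpha\otimes\mathbb{Z}/n)\circ\beta$ is therefore forced to coincide with the classical étale cycle map $\mathbb{Z}/n(q)\to tr_{\leq q}\mathbb{R}\epsilon_{*}\mu_{n}^{\otimes q}$.

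Next, I would invoke Voevodsky's resolution of the Beilinson-Lichtenbaum conjecture, which says that this étale cycle map is a quasi-isomorphism; combined with Artin's comparison, the composite $(\alpha\otimes\mathbb{Z}/n)\circ\beta$ is thus a quasi-isomorphism. The remaining step is to verify that $\beta$ itself is a quasi-isomorphism. This is the Suslin-Friedlander comparison with finite coefficients, which asserts that with $\mathbb{Z}/n$-coefficients the natural map from motivic to semi-topological cohomology is an isomorphism on hypercohomology of every smooth complex variety, as a consequence of Suslin rigidity together with the mod-$n$ Beilinson-Lichtenbaum isomorphism. Since both complexes satisfy Nisnevich descent on $Sm/\mathbb{C}$, this hypercohomology isomorphism promotes to a quasi-isomorphism of complexes of Zariski sheaves. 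A two-out-of-three argument applied to the triangle then yields the theorem.

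The main obstacle will be this last step. Promoting the Friedlander-Suslin hypercohomology comparison to a quasi-isomorphism of complexes of Zariski sheaves requires careful handling of the Nisnevich descent properties of $\mathbb{Z}/n(q)^{sst}$ and $\mathbb{Z}/n(q)$ and of the fact that the comparison map $\beta$ respects this descent at the level of complexes. An alternative route that avoids the sheaf-level comparison is to verify the desired quasi-isomorphism $\alpha\otimes\mathbb{Z}/n$ directly on stalks at Hensel local rings of smooth $\mathbb{C}$-varieties, using a semi-topological analog of Suslin rigidity to reduce each stalk to the case of $\spec\mathbb{C}$, where the statement is immediate from the Hom-computation of Theorem \ref{MT}.
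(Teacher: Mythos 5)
Your approach shares the paper's core strategy: use the Hom-group computation of Theorem~\ref{MT} together with the identification of the $\Z/n$-semi-topological complex with the $\Z/n$-motivic complex (Suslin rigidity), and then invoke Voevodsky's Beilinson--Lichtenbaum theorem to see that the classical cycle map is a quasi-isomorphism, from which the result follows. However, there is a genuine gap at the step where you claim the composite $(\alpha\otimes\Z/n)\circ\beta$ is a \emph{generator} of $\Z/n$ ``because its restriction to $\spec\C$ is nontrivial in degree zero.'' Nontriviality does not imply generation when $n$ is composite: for instance $2\in\Z/4$ is nonzero but not a unit. Consequently you cannot conclude that the composite differs from the \'etale cycle map only by a unit, and the two-out-of-three argument does not go through. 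The paper sidesteps precisely this issue by arguing prime-by-prime: for $n=p$ prime, $\alpha\otimes\Z/p = N\beta$ with $0<N<p$, so $N$ is automatically a unit mod $p$, and the general case is then deduced for prime powers and for products of distinct primes by the 5-lemma. To repair your argument you could either adopt this prime-by-prime reduction, or strengthen the restriction-to-$\spec\C$ claim to say that the degree-zero map is an isomorphism (so the composite has order exactly $n$). By contrast, the ``main obstacle'' you flag (lifting the Friedlander--Suslin comparison to a quasi-isomorphism of Zariski sheaf complexes) is not really an extra difficulty: the paper uses exactly the same input, in the form $\Z/p(q)^{sst}\simeq\Z/p(q)$, and this is the standard sheaf-level statement of Suslin rigidity over $\C$.
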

 \begin{proof} 
Theorem \ref{MT} and the quasi-isomorphism $\Z(q)^{sst}\otimes\Z/p\wkeq\Z/p(q)^{sst}$ gives
$$
\Hom _{D^-}(\mathbb{Z}(q)^{sst},\, tr _{\leq q}\mathbb{R}\phi _*\mathbb{Z}/p)=\Hom _{D^-}(\mathbb{Z}/p(q)^{sst},\, tr _{\leq q}\mathbb{R}\phi _*\mathbb{Z}/p)=\mathbb{Z}/p
$$ f
or any prime $p>1$. 
Let $\epsilon :(Sm/\C) _{et}\rightarrow (Sm/\C) _{Zar}$ be the usual map of sites.
 The cycle map $\beta: \mathbb{Z}/p(q) \rightarrow tr _{\leq q}\mathbb{R}\epsilon _*\mathbb{Z}/p$ can be seen as a generator of the group  $\Hom_{D^{-}(Sm/\C)}(\mathbb{Z}/p(q)^{sst}, tr _{\leq n}\mathbb{R}\phi _*\mathbb{Z}/p)$ because we have $\mathbb{Z}/p(q)^{sst}\simeq \mathbb{Z}/p(q)$ and $\R\phi _*\mathbb{Z}/p=\R\epsilon _*\Z/p$ by the classical comparison theorem between \'etale and singular cohomology. This means that there is $0<N<p$ such that $\alpha\otimes \Z/p=N\beta$. Since $N$ is prime to $p$ it follows that $\alpha\otimes \Z/p$ is a quasi-isomorphism.
 
 One easily obtains the result for powers of primes and then products of distinct primes using 5-lemma.
  \end{proof}

\bibliographystyle{amsalpha} 
\bibliography{filtrations}

\providecommand{\bysame}{\leavevmode\hbox to3em{\hrulefill}\thinspace}
\providecommand{\MR}{\relax\ifhmode\unskip\space\fi MR }
\providecommand{\MRhref}[2]{%
  \href{http://www.ams.org/mathscinet-getitem?mr=#1}{#2}
}
\providecommand{\href}[2]{#2}
\begin{thebibliography}{AKMW02}

\bibitem[AKMW02]{AKMW:birat}
Dan Abramovich, Kalle Karu, Kenji Matsuki, and Jaros{\l}aw W{\l}odarczyk,
  \emph{Torification and factorization of birational maps}, J. Amer. Math. Soc.
  \textbf{15} (2002), no.~3, 531--572 (electronic). \MR{1896232 (2003c:14016)}

\bibitem[BCR98]{BCR:rag}
Jacek Bochnak, Michel Coste, and Marie-Fran{\c{c}}oise Roy, \emph{Real
  algebraic geometry}, Ergebnisse der Mathematik und ihrer Grenzgebiete (3)
  [Results in Mathematics and Related Areas (3)], vol.~36, Springer-Verlag,
  Berlin, 1998, Translated from the 1987 French original, Revised by the
  authors. \MR{1659509 (2000a:14067)}

\bibitem[BH61]{BH:cycle}
Armand Borel and Andr{\'e} Haefliger, \emph{La classe d'homologie fondamentale
  d'un espace analytique}, Bull. Soc. Math. France \textbf{89} (1961),
  461--513. \MR{0149503 (26 \#6990)}

\bibitem[BO74]{BO:gersten}
Spencer Bloch and Arthur Ogus, \emph{Gersten's conjecture and the homology of
  schemes}, Ann. Sci. \'Ecole Norm. Sup. (4) \textbf{7} (1974), 181--201
  (1975). \MR{MR0412191 (54 \#318)}

\bibitem[Cox79]{Cox:real}
David~A. Cox, \emph{The \'etale homotopy type of varieties over {${\bf R}$}},
  Proc. Amer. Math. Soc. \textbf{76} (1979), no.~1, 17--22. \MR{MR534381
  (80f:14009)}

\bibitem[CTHK97]{CTHK:BO}
Jean-Louis Colliot-Th{\'e}l{\`e}ne, Raymond~T. Hoobler, and Bruno Kahn,
  \emph{The {B}loch-{O}gus-{G}abber theorem}, Algebraic $K$-theory (Toronto,
  ON, 1996), Fields Inst. Commun., vol.~16, Amer. Math. Soc., Providence, RI,
  1997, pp.~31--94. \MR{MR1466971 (98j:14021)}

\bibitem[CTP90]{CTP:real}
J.-L. Colliot-Th{\'e}l{\`e}ne and R.~Parimala, \emph{Real components of
  algebraic varieties and \'etale cohomology}, Invent. Math. \textbf{101}
  (1990), no.~1, 81--99. \MR{1055712 (91j:14015)}

\bibitem[CTS96]{CTS:zero}
J.-L. Colliot-Th{\'e}l{\`e}ne and C.~Scheiderer, \emph{Zero-cycles and
  cohomology on real algebraic varieties}, Topology \textbf{35} (1996), no.~2,
  533--559. \MR{1380515 (97a:14009)}

\bibitem[dS03]{DS:real}
Pedro~F. dos Santos, \emph{Algebraic cycles on real varieties and {${\mathbb
  Z}/2$}-equivariant homotopy theory}, Proc. London Math. Soc. (3) \textbf{86}
  (2003), no.~2, 513--544. \MR{MR1971161 (2004c:55026)}

\bibitem[FG93]{FG:cyc}
Eric~M. Friedlander and Ofer Gabber, \emph{Cycle spaces and intersection
  theory}, Topological methods in modern mathematics (Stony Brook, NY, 1991),
  Publish or Perish, Houston, TX, 1993, pp.~325--370. \MR{MR1215970
  (94j:14010)}

\bibitem[FHW04]{FHW:sst}
Eric~M. Friedlander, Christian Haesemeyer, and Mark~E. Walker,
  \emph{Techniques, computations, and conjectures for semi-topological
  {$K$}-theory}, Math. Ann. \textbf{330} (2004), no.~4, 759--807.
  \MR{MR2102312}

\bibitem[FM94]{FM:filt}
Eric~M. Friedlander and Barry Mazur, \emph{Filtrations on the homology of
  algebraic varieties}, Mem. Amer. Math. Soc. \textbf{110} (1994), no.~529,
  x+110, With an appendix by Daniel Quillen. \MR{MR1211371 (95a:14023)}

\bibitem[Fri98]{F:algco}
Eric~M. Friedlander, \emph{Algebraic cocycles on normal, quasi-projective
  varieties}, Compositio Math. \textbf{110} (1998), no.~2, 127--162.
  \MR{MR1602068 (2000a:14024)}

\bibitem[FW02]{FW:real}
Eric~M. Friedlander and Mark~E. Walker, \emph{Semi-topological {$K$}-theory of
  real varieties}, Algebra, arithmetic and geometry, Part I, II (Mumbai, 2000),
  Tata Inst. Fund. Res. Stud. Math., vol.~16, Tata Inst. Fund. Res., Bombay,
  2002, pp.~219--326. \MR{MR1940670 (2003h:19005)}

\bibitem[HV09]{HV:VT}
Jeremiah Heller and Mircea Voineagu, \emph{Vanishing theorems for real
  algebraic cycles}, American Journal of Mathematics. To appear. Preprint
  available at http://arxiv.org/abs/0909.0569 (2009).

\bibitem[HV10]{HV:AHSS}
\bysame, \emph{Equivariant semi-topological invariants, atiyah's $kr$-theory,
  and real algebraic cycles}, Transactions of the American Mathematical
  Society. To appear. Preprint available at http://arxiv.org/abs/1008.3685
  (2010).

\bibitem[IS88]{IS:real}
Friedrich Ischebeck and Heinz-Werner Sch{\"u}lting, \emph{Rational and
  homological equivalence for real cycles}, Invent. Math. \textbf{94} (1988),
  no.~2, 307--316. \MR{958834 (89k:14004)}

\bibitem[LF92]{LF:qproj}
Paulo Lima-Filho, \emph{Lawson homology for quasiprojective varieties},
  Compositio Math. \textbf{84} (1992), no.~1, 1--23. \MR{MR1183559 (93j:14007)}

\bibitem[Sch85]{S:reelle}
Heinz-Werner Sch{\"u}lting, \emph{Algebraische und topologische reelle {Z}ykeln
  unter birationalen {T}ransformationen}, Math. Ann. \textbf{272} (1985),
  no.~3, 441--448. \MR{799672 (87g:14018)}

\bibitem[Sch94]{Sch:ret}
Claus Scheiderer, \emph{Real and \'etale cohomology}, Lecture Notes in
  Mathematics, vol. 1588, Springer-Verlag, Berlin, 1994. \MR{1321819
  (96c:14018)}

\bibitem[Teh10]{Teh:real}
Jyh-Haur Teh, \emph{A homology and cohomology theory for real projective
  varieties}, Indiana Univ. Math. J. \textbf{59} (2010), no.~1, 327--384.
  \MR{2666482 (2011g:14018)}

\bibitem[Voe03]{Voev:miln}
Vladimir Voevodsky, \emph{Motivic cohomology with {${\bf Z}/2$}-coefficients},
  Publ. Math. Inst. Hautes \'Etudes Sci. (2003), no.~98, 59--104. \MR{MR2031199
  (2005b:14038b)}

\bibitem[{Voi}]{Voin:2}
M.~{Voineagu}, \emph{{Cylindrical Homomorphisms and Lawson Homology}}, Journal
  of K-theory. To appear. Preprint available at http://arxiv.org/abs/0904.3374.

\end{thebibliography}
\end{document}